\colorlet{lightgray}{black!15}
\tikzset{->-/.style={decoration={
  markings,
  mark=at position .5 with {\arrow{>}}},postaction={decorate}}}
\tikzset{midarrow/.style={decoration={
    markings,
    mark=at position {#1} with {\arrow{>}}},postaction={decorate}}}
\newtheorem{theorem}{Theorem}[subsection]
\newtheorem{prop}[theorem]{Proposition}
\newtheorem{lemma}[theorem]{Lemma}
\newtheorem{cor}[theorem]{Corollary}
\newtheorem{conj}[theorem]{Conjecture}
\theoremstyle{definition}
\newtheorem{definition}[theorem]{Definition}
\newtheorem{note}[theorem]{Note}
\newtheorem{observation}[theorem]{Observation}
\newtheorem{construction}[theorem]{Construction}
\newtheorem{terminology}[theorem]{Terminology}
\newtheorem{remark}[theorem]{Remark}
\newtheorem{example}[theorem]{Example}
\newtheorem{notation}[theorem]{Notation}
\theoremstyle{remark}
\definecolor{orange}{rgb}{.95,0.5,0}
\definecolor{light-gray}{gray}{0.75}
\definecolor{brown}{cmyk}{0, 0.8, 1, 0.6}
\definecolor{plum}{rgb}{.5,0,1}
\newcommand{\Hom}{{\rm{Hom}}}
\newcommand{\R}{\mathbb{R}}
\newcommand{\N}{\mathbb{N}}
\newcommand{\ds}{\displaystyle}
\newcommand{\s}{\mathbb{S}}
\newcommand{\lra}{\longrightarrow}
\newcommand{\tx}{\text}
\newcommand{\fin}{{\sf{Fin}_*}}
\newcommand{\Fin}{{\sf{Fin}}}
\newcommand{\surj}{{\sf{surj}}}
\newcommand{\op}{{\sf{op}}}
\DeclareMathOperator{\pr}{{\sf{pr}}}
\DeclareMathOperator{\Ran}{{\sf{Ran}}}
\DeclareMathOperator{\Exit}{{\sf{Exit}}}   
\newcommand{\exit}{{\sf{exit}}}
\newcommand{\un}{\underline}
\newcommand{\Act}{{\sf{act}}}
\newcommand{\G}{\mathrm{G}}
\DeclareMathOperator{\Ranu}{\sf{Ran}^{\sf{u}}}
\newcommand{\ra}{\rightarrow}
\newcommand{\Strat}{\text{Strat}}
\newcommand{\cylr}{{ \sf cylr}}
\newcommand{\xra}{\xrightarrow}
\newcommand{\hra}{\hookrightarrow}
\newcommand{\xhra}{\xhookrightarrow}
\newcommand{\tr}{{{\sf tr}}}
\newcommand{\F}{\mathcal{F}}
\newcommand{\C}{\mathcal{C}}
\newcommand{\Id}{\tx{Id}}
\newcommand{\Cat}{{\sf{Cat}}}
\newcommand{\D}{\mathcal{D}}
\newcommand{\Ar}{{\sf{Ar}}}
\newcommand{\Fun}{{{\sf Fun}}}
\newcommand{\h}{{\sf{hlt}}}
\newcommand{\Conf}{{\sf{Conf}}}
\newcommand{\shs}{\hspace{.1cm}}
\newcommand{\la}{\langle}
\newcommand{\ran}{\rangle}
\newcommand{\E}{\mathcal{E}}
\newcommand{\ev}{\text{ev}}
\newcommand{\pb}{\ar[dr, phantom, "\lrcorner", very near start]}
\newcommand{\Ner}{\mathcal{N}}
\newcommand{\mor}{{\sf{mor}}}
\DeclareMathOperator{\fib}{\sf fib}
\newcommand{\DDelta}{\mathbf{\Delta}}
\newcommand{\B}{\mathcal{B}}
\newcommand{\Alg}{{\sf Alg}}
\newcommand{\bl}{\bigl}
\newcommand{\br}{\bigr}
\DeclareMathOperator{\Bun}{\B{\sf{un}}}
\DeclareMathOperator{\spaces}{\cS\mathsf{paces}}
\DeclareMathOperator{\Spaces}{\cS\mathsf{paces}}
\DeclareMathOperator{\sk}{\sf sk}
\def\cS{\mathcal S}
\DeclareMathOperator{\RRef}{\mathcal{R}{\sf ef}}
\DeclareMathOperator{\V}{\mathcal{V}}
\DeclareMathOperator{\A}{\mathcal{A}}
\DeclareMathOperator{\cB}{\mathcal{B}}
\DeclareMathOperator{\uno}{\mathbbm{1}}
\DeclareMathOperator{\fB}{\mathfrak{B}}
\begin{document}

\title{Higher-categorical combinatorics of configuration spaces of Euclidean space}
\author{Anna Cepek}


\address{University of Oregon \\ Eugene, OR USA}
\email{cepek@uoregon.edu, anna.cepek@gmail.com}
\thanks{This work was supported by NSF award 1507704, IBS-R003-D1 and NSF-RTG grant DMS-2039316}


\begin{abstract}
We examine configurations of finite subsets of manifolds within 
the homotopy-theoretic context of $\infty$-categories by way of stratified spaces. 
Through these higher categorical means, 
we identify the homotopy types of such configuration spaces 
in the case of $n$-dimensional Euclidean space in terms of the category $\mathbf{\Theta}_n$. 
\end{abstract}

\keywords{Configuration spaces. 
Ran space.
Stratified spaces.
Exit-path categories.
$\E_n$-algebras.
$\infty$-categories.
Localizations.
The $\mathbf{\Theta}_n$ categories.
Wreath product.}


\subjclass[2020]{Primary 55R80. Secondary 57N80, 18F60, 55P60}


\maketitle

\tableofcontents

\section{Introduction}
Configuration spaces (of finite subsets of topological spaces) 
play fruitful roles across algebraic topology, geometric 
and differential topology, and applied topology
as Knudsen exposits in \cite{BK}.
For example, the embedding calculus of Goodwillie-Weiss 
uses them as fundamental building blocks to study differential topology \cite{W1, GW2}. 
More basically, the homotopy type of a fixed configuration space 
often detects subtle topological features of the background manifold.
For example, in \cite{SL}, Salvatore-Longoni identify two homotopy equivalent 
Lens spaces of distinct homeomorphism type whose configuration spaces 
of just two points are not homotopy equivalent.

We are motivated by Ayala-Francis's factorization homology 
which uses configuration spaces to define manifold invariants 
within the homotopy-theoretic framework of $\infty$-categories \cite{AF1}.
More specifically, motivated by the theory of stratified spaces 
and exit-path $\infty$-categories thereof \cite{Lu1, AFT, AFR}, 
we codify the homotopy type of configuration spaces of finite subsets of manifolds.
Our main result identifies such configuration spaces of $n$-dimensional Euclidean space 
$\R^n$ combinatorially in terms of the category $\mathbf{\Theta}_n$. 
Joyal introduced the `theta-categories' $\mathbf{\Theta}_n$ to present 
the first model of $(\infty, n)$-categories \cite{Joy3}. 
This was a direct generalization of his theory of quasi-categories 
\cite{Joy} as $\mathbf{\Theta}_1$ is defined to be the simplex category $\mathbf{\Delta}$. 
We use Berger's definition of $\mathbf{\Theta}_n$ as the n-fold 
wreath product of $\mathbf{\Delta}$ with itself \cite{Ber}. 

The relationship between configuration spaces of $\R^n$ 
and the category $\mathbf{\Theta}_n$ is a natural one as Ayala-Hepworth show in \cite{AH}
and has a simple idea at its core.  
With the following discussion, we hope to demonstrate this simple idea
before introducing any technical machinery.

\subsubsection{Finite subsets of Euclidean space and trees}
Let $S$ be a finite, nonempty subset of $\R^n$.
Consider a sequence of its projections
$$S \twoheadrightarrow S_{n-1} \twoheadrightarrow \cdots \twoheadrightarrow S_1$$
where $S_{n-i}$ is the finite subset of $\R^{n-i}$ 
obtained by projecting off the last $i$~coordinates of $S$ for each $1 \leq i \leq n-1$, 
and each surjection is induced by the projection map $\R^{n-i+1}~\twoheadrightarrow~\R^{n-i}$.
Consider the following two observations about $S$:
\begin{itemize}
\item \textit{Linear order on $S_1$:}
The projection of $S$ onto its set of first coordinates $S_1$ is a subset of $\R$ 
and therefore inherits a linear order from $\R$ equipped with the standard linear order. 
\item \textit{Linear order on each fiber:}
For each point $s \in S_{n-i}$, its fiber in $S_{n-i+1}$
naturally inherits a linear order from $\R$.
Indeed, it is a subset of the fiber over $s$ of the projection map 
$\R^{n-i+1} \twoheadrightarrow \R^{n-i}$ which canonically identifies with $\R$.
\end{itemize}

We call this linear ordering on the set of first coordinates of $S$
and each fiber of its projections an \textit{$n$-order} on $S$. 
More generally, an $n$-ordering of a finite set $S$ is a sequence of surjective maps 
$$S \xra{\sigma_{n-1}} S_{n-1} \ra \cdots \xra{\sigma_1} S_1$$ 
among finite sets together with a linear order on $S_1$ 
and a linear order on each fiber of each map in the sequence, namely, 
for each $1 \leq i \leq n-1$, a linear order on $\sigma_i^{-1}\left(s\right)$ for each $s \in S_i$. 
A $1$-ordered set $S$, then, is just a linearly ordered set. 

It is not hard to see that finite n-ordered sets are 
combinatorially codified by the following type of trees. 

\begin{definition}[Defs. 1 \& 2, \cite{AH}] \label{tree} 
The following compile to define \emph{planar level rooted trees}.
\begin{enumerate}
\item A \emph{level tree} is a finite, rooted tree, the \emph{root} of which is a choice of vertex thereof. 
The choice of root uniquely determines a direction to each edge 
such that there is a unique directed path from each vertex to the root. 
\item For each vertex, we may equip the set of edges directed toward the vertex with a linear order. 
A tree is called a \emph{planar level tree} if such an order is specified with respect to each vertex. 
\item A vertex is at \emph{level} $i$ if the directed path from the vertex to the root counts $i$ edges.
\item A tree has \emph{height} $n$ if the maximum level of all the vertices is $n$.  
\item A vertex is a \emph{leaf} if it has no edges directed towards it. 
\item A planar level tree of height $n$ is \emph{healthy} if all of its leaves are at level $n$. 
Otherwise, it is \textit{unhealthy}.
\end{enumerate}
We will say \emph{tree} to mean planar level rooted tree. 
\end{definition}
 
The following are depictions of trees, 
where the root is at the bottom of each diagram; 
on the left, $T_1$ is a healthy tree of height one, in the middle, 
$T_2$ and $T_2'$ are a healthy trees of height two, and on the far right, 
$T_2^{\sf uh}$ is an unhealthy tree of height two.

\begin{figure}[ht]
\begin{center}
\begin{tikzpicture}[scale=.4] 
 
\filldraw[black] (-10, -2) circle (4pt) {};
\filldraw[black] (-9,-4) circle (4pt);
\filldraw[] (-8.9,-5) node {$T_1$};
\filldraw[black] (-8,-2) circle (4pt) {};

\filldraw[black] (-1,0) circle (4pt) {};
\filldraw[black] (1,0) circle (4pt) {};
\filldraw[black] (0,-2) circle (4pt) { };
\filldraw[black] (0,-4) circle (4pt) { };
\filldraw[] (.1,-5) node{$T_2$};

\filldraw[black] (9,-4) circle (4pt){};
\filldraw[] (9.1,-5) node{$T_2'$};
\filldraw[black] (10,-2) circle (4pt){};
\filldraw[black] (8,0) circle (4pt) {};
\filldraw[black](8,-2) circle (4pt){};
\filldraw[black] (10,0) circle (4pt) {};

\filldraw[black] (17,-2) circle (4pt) {};
\filldraw[black] (18,-4) circle (4pt){};
\filldraw[] (18.1,-5) node{$T_2^{\sf uh}$};
\filldraw[black] (19,-2) circle (4pt) {};
\filldraw[black] (18,0) circle (4pt) {};
\filldraw[black] (20,0) circle (4pt){};

\draw 
(-10,-2) -- (-9,-4)
(-9,-4) -- (-8,-2)

(-1,0) -- (0,-2)
(1,0) -- (0,-2)
(0,-2) -- (0,-4)

(8,0) -- (8,-2)
(8,-2) -- (9,-4)
(9,-4) -- (10,-2)
(10,-2) -- (10,0)

(17,-2) -- (18,-4)
(18,-4) -- (19,-2)
(19,-2) -- (20,0)
(19,-2) -- (18,0);

\end{tikzpicture}
\caption{}
\label{trees}
\end{center}
\end{figure}

A healthy tree $T$ of height $n$ encodes an $n$-order on its set of leaves as follows. 
The $i$th set of the sequence is the set of vertices at level $i$. 
The assignment from the set of vertices at level $i$ to the set of vertices at level $i-1$ is canonical, 
given by assigning to each vertex at level $i$ the vertex at level $i-1$ that is adjacent to it. 
Such a sequence of maps is surjective precisely because $T$ is healthy. 
The \emph{planar} condition (2) of Definition~\ref{tree} 
fixes the set of incoming edges of each vertex with a linear order; 
this condition induces a linear order on each fiber 
in the sequence and on the set of vertices at level $1$. 
As a convention, we will fix these linear orders to be read from left to right on trees. 

A natural assignment from the set of finite, nonempty subsets of $\R^n$ 
to the set of healthy trees of height $n$ is given by the $n$-order.
Namely, each set is assigned to the tree that encodes its $n$-order. 
Although this assignment is not particularly strong in the setting of sets
(it is not even injective), it shines in the setting of topological spaces.
In \cite{AH}, Ayala-Hepworth show that this assignment is a homotopy equivalence. 
The set of finite subsets of $\R^n$ is naturally upgraded 
to the configuration space of $k$ ordered points in $\R^n$.
On the other hand, planar level trees depict the objects 
of Joyal's $\mathbf{\Theta}_n$ categories (Def.~\ref{theta}) 
as the authors show, and they codify these categories 
as topological spaces by taking their classifying spaces.
The reason this assignment strengthens to a homotopy 
equivalence in the context of topological spaces
is that the Fox-Neuwirth cells of the configuration space of 
$\R^n$ \cite{FN}, which are determined by $n$-orders,
are contractible \cite{GS} and form a partially ordered set.

\subsubsection*{Main results}
In this paper, we examine the natural assignment between 
finite subsets of $\R^n$ and planar level trees
in the context of $\infty$-categories, and our main result 
articulates the sense in which it is an equivalence.
Motivated by the theory of stratified spaces and 
exit-path $\infty$-categories thereof \cite{Lu1, AFT, AFR},
we introduce an $\infty$-category $\Exit\bl(\Ranu(\R^n)\br)$ 
which codifies the homotopy type of 
all the finite unordered configurations of points in $\R^n$ at once, 
including the empty set.
It also codifies the maps between the configuration spaces of different cardinalities 
by witnessing anticollision and vanishing of points, but not collision (Def.~\ref{exitdef2}). 
Within this framework, the relationship between configuration spaces of $\R^n$ 
and the category $\mathbf{\Theta}_n$ is most naturally phrased 
as a localization of $\infty$-categories. 
Heuristically, this means that upon formally inverting certain 
morphisms of (a subcategory of) $\mathbf{\Theta}_n$ we obtain our 
$\infty$-category of configurations of points in $\R^n$. 
We articulate an introductory version of our main result as follows. 

\begin{theorem}[Thm.~\ref{loc}]  \label{introloc}
For each $n \geq 1$, there is a localization of $\infty$-categories
\begin{equation}\label{Gn}
\mathbf{\Theta}_n^\Act \lra \Exit\bl(\Ranu(\R^n)\br)
\end{equation}
from the subcategory of $\mathbf{\Theta}_n$ consisting 
of active morphisms (Def.~\ref{act})
to the exit-path $\infty$-category of the unital Ran space of $\R^n$. 
\end{theorem}

Our main result is an enhancement of \cite{AH} because 
the subset of $\R^n$ of cardinality zero is included,
leading to a much larger and better-behaved equivalence.
For example, the source of (\ref{Gn}), $\mathbf{\Theta}_n^\Act$, 
decomposes as a wreath product 
because it has the empty tree (the tree of height $0$), 
and all of the morphisms that come along with that, 
namely all of the active morphisms.
However, without the unit the resulting subcategory, 
$\mathbf{\Theta}_n^\exit$ (Def.~\ref{exit}), 
does not decompose as a wreath product. 

As a corollary to Theorem~\ref{introloc}, we recover the infinity-categorical generalization 
of \cite{AH}, yielding all of Ayala-Hepworth's homotopy equivalences 
(as cardinality varies) at once, in the unordered setting. 
The configuration space of $k$ points in $\R^n$ is upgraded to an $\infty$-subcategory 
of $\Exit\bl(\Ranu(\R^n)\br)$ denoted $\Exit\bigl(\Ran(\R^n)\bigr)$ 
whose objects are \emph{nonempty} finite subsets of 
$\R^n$, and whose morphisms may witness anticollision of points, 
but neither collision, nor the vanishing of points (Def.~\ref{little exit}).
In other words, this corollary is the nonunital version of Theorem~\ref{introloc}. 

\begin{cor}[Cor. \ref{locexit}] \label{introlocexit} 
For each $n \geq 1$, there is a localization of $\infty$-categories
$$
\mathbf{\Theta}_n^\exit \lra \Exit\bigl(\Ran(\R^n)\bigr)
$$
from the subcategory of $\mathbf{\Theta}_n$ consisting of healthy trees and exiting morphisms
to the exit-path $\infty$-category of the Ran space of $\R^n$. 
\end{cor}

An immediate consequence of Theorem~\ref{introloc} is that the wreath product 
decomposition of $\mathbf{\Theta}_2^\Act$ induces a likewise 
decomposition of $\Exit\left(\Ranu\left(\R^2\right)\right)$. 

\begin{cor}[Cor.~\ref{cor 2}] \label{introcor 2}
There is a localization of $\infty$-categories
$$
\Exit\left(\Ranu\left(\R\right)\right) \wr \Exit\left(\Ranu\left(\R\right)\right) 
\lra \Exit\left(\Ranu\left(\R^2\right)\right).
$$
\end{cor}

\subsubsection*{Approach}
In \cite{AFR}, the authors define an `exit-path $\infty$-category' functor 
which takes in a (nice) stratified space and outputs an $\infty$-category 
whose objects are points of the input space and whose morphisms are `exit' paths 
in the space; see \S\ref{AFRT} for more details.
Because the output $\infty$-categories of this construction are akin to our main 
$\infty$-category $\Exit\bl(\Ranu(\R^n)\br)$,
our approach to the proof of Theorem~\ref{introloc} is motivated by their result that 
the exit-path $\infty$-category functor carries \emph{refinements} of stratified spaces 
(Def. \ref{refinement}) to \emph{localizations} (Def.~\ref{localization}) of their exit-path $\infty$-categories 
(Thm.~3.3.12, \cite{AFR}). 
However, for technical reasons, our exit-path $\infty$-category  
$\Exit\bl(\Ranu(\R^n)\br)$
does not have a stratified space in the background; see \S\ref{nottrivial} for more details.
Therefore, we directly construct an $\infty$-category 
$\Exit\bl(\Ranu(\un{\R}^n)\br)$ (Def. \ref{underline})
to interpolate between $\mathbf{\Theta}_n^\Act$ and $\Exit\bl(\Ranu(\R^n)\br)$.
Its construction is motivated by a (nonexisting) stratification 
of the unital Ran space of $\R^n$ by
cardinality and the Fox-Neuwirth cells (\S\ref{FNRanu}); see \S\ref{exitFNRanu} for more details.
We show that $\mathbf{\Theta}_n^\Act$ is equivalent to $\Exit\bl(\Ranu(\un{\R}^n)\br)$, 
and then prove that $\Exit\bl(\Ranu(\un{\R}^n)\br)$ 
localizes to $\Exit\bl(\Ranu(\R^n)\br)$.
Key to our approach for proving the localization is 
Theorem~\ref{BcSS} of \cite{AMG} which gives a method for 
identifying localizations in favorable cases.

Much of the proof of Corollary~\ref{introlocexit} is extrapolated from the proof of 
Theorem~\ref{introloc}.
Indeed, both the domain $\mathbf{\Theta}_n^\exit$ 
and codomain $\Exit\bigl(\Ran(\R^n)\bigr)$ are $\infty$-subcategories (respectively) of the 
domain $\mathbf{\Theta}_n^\Act$ and codomain 
$\Exit\bl(\Ranu(\R^n)\br)$ of Theorem~\ref{introloc}.

\subsubsection*{Motivation and conjectures}
The main advantage of codifying \cite{AH} in a higher-categorical framework is that it yields 
a direct connection between $\E_n$-algebras and $\left(\infty,n\right)$-categories 
as Ayala-Hepworth suggest in \cite{AH}. 
In particular, our work gives an approximation to the folklore conjecture that 
relates $\E_n$-algebras to $\left(\infty,n\right)$-categories. 
Since we have not found this conjecture explicitly stated anywhere, we will articulate it here. 
Let $\V$ be a symmetric monoidal $\infty$-category.
Consider the $\infty$-category $\Alg_{\E_n}\left(\V\right)$ of $\E_n$-algebras in $\V$.
Consider the $\infty$-category $\Cat_{\left(\infty,n\right)}\left(\V\right)$ of $\V$-enriched 
$\left(\infty,n\right)$-categories.  
Consider the object $\un{\uno} \in \Cat_{\left(\infty,n\right)}\left(\V\right)$ whose underlying 
$\left(\infty,n-1\right)$-category 
$\un{\uno}_{<n}\simeq \ast$ is terminal, 
and whose ${\sf nEnd}_{\un{\uno}}\left(\ast\right) = \uno \in \V$.

Say a morphism $\A \xra{F} \cB$ between $\left(\infty,n\right)$-categories is \emph{$n$-connected} 
if, for each $0\leq k\leq n$, each diagram among $\left(\infty,n\right)$-categories

\[ \begin{tikzcd}
\partial c_k \ar{r} \ar{d} & \A \ar{d}{F} \\
c_k \ar{r}\ar[dashrightarrow]{ur} & \cB 
\end{tikzcd} \]
admits a filler.
Here, $c_k$ is the $\left(\infty,n\right)$-category corepresenting the space of $k$-morphisms, 
and $\partial c_k \to c_k$ is the inclusion of its maximal $\left(\infty,k-1\right)$-subcategory.

\begin{conj}
Let $\V$ be a symmetric monoidal $\infty$-category.
There is a fully faithful left adjoint
\[
\mathfrak{B}^n\colon \Alg_{\E_n}\left(\V\right)
\longrightarrow
\Cat_{\left(\infty,n\right)}\left(\V\right)^{\uno/}~;
\]
the image consists of those morphisms $\un{\uno} \to \C$ between $\V$-enriched 
$\left(\infty,n\right)$-categories whose underlying morphism $\ast \to \C_{<n}$ between 
$\left(\infty,n-1\right)$-categories is $\left(n-1\right)$-connected. 
\end{conj}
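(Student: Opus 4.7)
The plan is to construct $\mathfrak{B}^n$ by combining the main result of the paper, Theorem~\ref{introloc}, with the Lurie-style passage from $\E_n$-algebras to cosheaves on the Ran space, and then to extend along the inclusion $\mathbf{\Theta}_n^\Act \hookrightarrow \mathbf{\Theta}_n$ to land inside $\V$-enriched $(\infty,n)$-categories in the sense of Rezk. Concretely, given an $\E_n$-algebra $A$ in $\V$, I would first produce a $\V$-valued constructible cosheaf on $\Ranu(\R^n)$ (using the unital enhancement of Lurie's Theorem~5.5.4.10, where the unit of $A$ provides the extension over the empty configuration stratum), then apply Corollary~3.3.11 of \cite{AFR} to obtain a functor $\Exit\bigl(\Ranu(\R^n)\bigr) \to \V$, and finally precompose with the localization $\mathbf{\Theta}_n^\Act \to \Exit\bigl(\Ranu(\R^n)\bigr)$ of Theorem~\ref{introloc} to get a presheaf on $(\mathbf{\Theta}_n^\Act)^\op$. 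A left Kan extension along $\mathbf{\Theta}_n^\Act \hookrightarrow \mathbf{\Theta}_n$ then defines $\mathfrak{B}^n(A) \in \Fun(\mathbf{\Theta}_n^\op, \V)$.

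The pointing $\un{\uno} \to \mathfrak{B}^n(A)$ comes from the terminal tree in $\mathbf{\Theta}_n$ (the root-only tree, corresponding to the empty configuration $\emptyset \in \Ranu(\R^n)$), whose value is the monoidal unit $\uno$ by unitality of the cosheaf, and whose structure maps exhibit the underlying $(\infty,n-1)$-category as terminal. The image characterization should then be seen as follows: the presheaf $\mathfrak{B}^n(A)$ has the property that all cells $c_k$ for $k<n$ evaluate to contractible objects (since the corresponding trees in $\mathbf{\Theta}_n^\Act$ only have leaves at strictly lower level and receive unital values), while $c_n$ evaluates to $A$. This forces $\mathfrak{B}^n(A)_{<n} \simeq \ast$, and the map $\ast \to \mathfrak{B}^n(A)_{<n}$ is tautologically $(n-1)$-connected.

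For the adjunction and full faithfulness, I would construct the right adjoint $\Omega^n\colon \Cat_{(\infty,n)}(\V)^{\uno/} \to \Alg_{\E_n}(\V)$ by $(\un{\uno} \to \C) \mapsto \mathsf{nEnd}_\C(\ast)$, with its $\E_n$-structure coming from the Dunn-Lurie additivity $\E_n \simeq \E_1^{\otimes n}$ applied iteratively to the Segal spaces governing composition in the $n$ directions of $\C$. The unit $A \to \mathsf{nEnd}_{\mathfrak{B}^n(A)}(\ast)$ should be an equivalence by direct computation at the object $c_n \in \mathbf{\Theta}_n$. For the counit $\mathfrak{B}^n\Omega^n(\C) \to \C$: when $\ast \to \C_{<n}$ is $(n-1)$-connected, the Segal conditions force $\C$ to be reconstructible from its value at $c_n$ together with the unit, which is exactly the data packaged by $\mathfrak{B}^n\Omega^n(\C)$.

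The main obstacle will be verifying that the left Kan extension from $\mathbf{\Theta}_n^\Act$ to $\mathbf{\Theta}_n$ in fact lands in the Rezk-Segal-complete full subcategory $\Cat_{(\infty,n)}(\V) \subset \Fun(\mathbf{\Theta}_n^\op, \V)$. Concretely, the Segal condition on a tree $T \in \mathbf{\Theta}_n$ demands that $\mathfrak{B}^n(A)(T)$ decomposes as an appropriate limit over the spine of $T$; this decomposition corresponds under Theorem~\ref{introloc} to a refinement-type property of configurations in $\R^n$ that must mesh precisely with the $\E_n$-algebra structure of $A$. A plausible route is to factor the problem through the intermediate Segal model of Barwick-style $n$-fold complete Segal spaces, where the Segal condition decouples direction-by-direction and can be verified inductively using the factorization $\E_n \simeq \E_1 \otimes \E_{n-1}$; the wreath-product description of $\mathbf{\Theta}_n = \mathbf{\Delta}\wr \mathbf{\Theta}_{n-1}$ used in the paper makes this induction structurally natural. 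Completeness should follow automatically from the fact that invertible morphisms in $\mathfrak{B}^n(A)$ correspond to configurations in the same Ran-space stratum, which the localization of Theorem~\ref{introloc} already identifies.
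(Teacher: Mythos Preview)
The statement you are trying to prove is labeled a \emph{Conjecture} in the paper, and the paper does not prove it. It appears in the ``Motivation and conjectures'' section; the surrounding text explains only that Theorem~\ref{introloc} and Corollary~\ref{introlocexit} produce certain fully faithful functors that fit into a larger diagram \emph{toward} the conjecture, and the author explicitly writes that she is ``presently working on a generalization of Theorem~\ref{introloc}'' needed to approach it. So there is no proof in the paper to compare your proposal against.

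That said, your sketch is a reasonable outline of a strategy, but it has genuine gaps that you should be aware of. First, both Lurie's Theorem~5.5.4.10 and Corollary~3.3.11 of \cite{AFR} are stated for $\Spaces$-valued (co)sheaves; passing to a general symmetric monoidal $\infty$-category $\V$ is not a formality, and the constructible-cosheaf step in particular does not obviously survive that generalization. Second, and more seriously, the step where you left Kan extend along $\mathbf{\Theta}_n^{\Act}\hookrightarrow\mathbf{\Theta}_n$ is exactly where the paper signals that something further is needed: the diagram~(\ref{ff3}) leaves the corresponding arrow dashed and labeled ``conjecture'', and the author points instead toward an as-yet-unproved localization $\mathbf{\Theta}_n\to\Exit\bigl(\Ranu((\R^n)_+)\bigr)$ involving the one-point compactification. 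A left Kan extension is not obviously the same operation, and there is no argument here for why it should land in the Segal-complete locus. Your inductive idea via $\mathbf{\Theta}_n=\mathbf{\Delta}\wr\mathbf{\Theta}_{n-1}$ and Dunn additivity is plausible in spirit, but it is a sketch of where a proof might go, not a proof; the paper itself does not claim to have carried it out.
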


If we specialize to the case that $\left(\V,\otimes\right) = \left(\spaces,\times\right)$, 
our results Theorem~\ref{introloc} and Corollary~\ref{introlocexit} 
give an approximation to this conjecture. 
Let us review some key results from the literature before we explain how. 

The collection of unordered configuration spaces of finite subsets of $\R^n$ exhibit 
the algebraic structure of the $\E_n$-operad. Lurie identifies this relationship 
as a fully faithful embedding
$${\Alg}_{\E_n}\left(\spaces\right) \hra {{\sf coShv}}^{\tx{cbl}}_{\spaces} \left(\Ran\left(\R^n\right)\right)$$ 
from $\E_n$-algebras valued in spaces to constructible cosheaves (locally constant on each stratum) 
on the Ran space of $\R^n$ (Thm.~5.5.4.10, \cite{Lu1}). 

Corollary~3.3.11 of \cite{AFR} yields an equivalence of $\infty$-categories
$${{\sf Shv}}^{\tx{cbl}}_{\spaces} \left(\Ran\left(\R^n\right)\right) 
\simeq \Fun\left(\Exit\bigl(\Ran(\R^n)\bigr), \spaces\right)$$
between space-valued constructible sheaves on $\Ran\left(\R^n\right)$ 
and space-valued functors on $\Exit\bigl(\Ran(\R^n)\bigr)$.
Thus, there is a fully faithful embedding
$$\Alg_{\E_n}\left(\spaces\right) \hra  
\Fun\left(\Exit\bigl(\Ran(\R^n)\bigr)^\op, \spaces\right).$$
We conjecture that this functor factors through space-valued functors 
on $\Exit\bl(\Ranu(\R^n)\br)^\op$ as an equivalence of $
\infty$-categories. 

Lastly, Theorem~\ref{introloc} and Corollary~\ref{introlocexit} induce fully faithful functors 
from space-valued functors on $\Exit\bl(\Ranu(\R^n)\br)$ 
and $\Exit\bigl(\Ran(\R^n)\bigr)$ respectively, 
to space-valued functors out of the appropriate subcatgories of $\mathbf{\Theta}_n$. 
These functors fit into the following diagram as indicated. 

\begin{equation} \label{ff3} \begin{tikzcd}[row sep=28, column sep=28]
\Alg_{\E_n}\left(\Spaces\right) \ar[dashed]{rr}{\fB^n}[swap]{\tx{fully faithful}} 
\ar[dashed]{d}[swap]{\simeq} 
&&\Cat_{\left(\infty,n\right)}\left(\Spaces\right)^{\uno/}  \ar[hookrightarrow]{d}[swap]{\tx{forget}} 
\ar[hookrightarrow]{d}{\tx{(fully faithful, by def)}} \\
\Fun\left(\Exit\bl(\Ranu(\R^n)\br)^\op, \Spaces \right) 
\ar{d}[swap]{\tx{forget}} 
\ar[hookrightarrow]{rr}{\tx{Thm.~\ref{introloc}}} 
\ar[hookrightarrow]{rr}[swap]{\tx{fully faithful}} 
&& \Fun\left(\left(\mathbf{\Theta}_n^\Act\right)^\op, \Spaces\right)  
\ar[hookrightarrow]{d}{\tx{forget}} \\ 
\Fun\left(\Exit\bigl(\Ran(\R^n)\bigr)^\op, \Spaces \right) 
\ar{d}[swap]{\simeq} \ar[hookrightarrow]{rr}{\tx{Cor.~\ref{introlocexit}}} 
\ar[hookrightarrow]{rr}[swap]{\tx{fully faithful}} && 
\Fun\left(\left(\mathbf{\Theta}_n^\exit\right)^\op, \Spaces\right) \\
\sf{coShv_{\Spaces}}\left(\Ran\left(\R^n\right)\right) && 
\end{tikzcd}
\end{equation}

Our paper is also motivated by Dwyer-Hess-Knudsen's decomposition 
of the configuration space of a product of parallelizable manifolds into the 
factors according to the Boardman-Vogt tensor product \cite{DHK}.
Indeed, Corollary~\ref{introcor 2} is a closely related decomposition and
we conjecture two natural generalizations of it. 

\begin{conj} 
For $n \geq 2$, there is a localization of $\infty$-categories
$$
\Exit\left(\Ranu\left(\R\right)\right) \wr \Exit\left(\Ranu\left(\R^{n-1}\right)\right) 
\lra \Exit\bl(\Ranu(\R^n)\br).
$$
\end{conj}

\begin{conj} 
For connected, parallelizable manifolds $M$ and $N$ there is a localization of $\infty$-categories
$$
\Exit\bl(\Ranu(M)\br) \wr \Exit\left(\Ranu\left(N\right)\right) 
\lra \Exit\left(\Ranu\left(M \times N\right)\right).
$$
\end{conj}

\begin{remark}
The $\infty$-category $\Exit\bigl(\Ran(\R^n)\bigr)$ does not admit a wreath product decomposition 
because the subcategory $\mathbf{\Theta}_n^\exit$ does not admit a wreath product decomposition.
\end{remark}

\subsubsection*{Use of \texorpdfstring{$\infty$}{Lg}-categories} 
In this paper, we use $\left(\infty, 1\right)$-categories, or simply, `$\infty$-categories', 
to package the homotopy type of configuration spaces. 
There are many models for $\infty$-categories; 
notable for the scope of this paper is the work of Lurie in \cite{Lu1} on the theory of 
\emph{quasi-categories} and the work of Rezk in \cite{Rezk2} on the theory of 
\emph{complete Segal spaces}. 

\textbf{Quasi-categories:} 
We use Joyal's quasi-category model of $\infty$-categories from \cite{Joy}, 
wherein a quasi-category is defined to be a \emph{simplicial set}, 
that is, a functor from the opposite of the simplex category $\mathbf{\Delta}^\op$ 
(Def.~\ref{simplex}) 
to the category of sets, $\tx{Set}$, 
 that satisfies a certain condition called the \emph{inner-horn filling condition}. 
For the definition of this condition, together with other basic notions regarding 
quasi-catgeories see Rezk's friendly exposition \cite{Rezk3}.

\textbf{Complete Segal spaces:} 
We use Rezk's complete Segal spaces to model $\infty$-categories \cite{Rezk2}. 
The $\infty$-category of spaces $\spaces$ is the localization 
of the category of topological spaces that admit a CW structure 
and continuous maps thereof, localized on (weak) homotopy equivalences. 
We call its objects \emph{spaces}, i.e., CW complexes. 
A complete Segal space is a \emph{simplicial space}, that is, 
a functor from the opposite of the simplex category $\mathbf{\Delta}^\op$ 
to the $\infty$-category of spaces $\spaces$ that satisfies the \emph{completeness} 
and \emph{Segal} conditions (Def.~\ref{cSegspc}).
We refer to a point in the $[p]$-space of such a functor as a $[p]$-point. 
For any additional information about complete Segal spaces, see \cite{Rezk2}.

\textbf{The nerve functor:} 
There is a construction which takes an ordinary category $\C$ 
and produces an $\infty$-category $\Ner \C$ called the (ordinary) \emph{nerve} of $\C$. 
This construction is explicated by a fully faithful functor from the category of categories 
to the category of simplicial sets, through which each category is carried to a quasi-category. 
In light of the fully faithfulness of this functor, 
we refer to an ordinary category as  an $\infty$-category 
without any reference to its nerve, whenever appropriate within the context.  
For a definition of the nerve, see Definition~3.1 in \cite{Rezk3}.

\textbf{Model independence:} 
In this paper, we work model independently, which, by the work of Joyal and Tierney, 
is a valid approach, 
since quasi-categories are shown to be equivalent to complete Segal spaces \cite{JT}. 
Model independence is exercised in this paper, for example, 
in that the hom-$\infty$-groupoid with fixed source and target of 
a quasi-category is equivalent to a space (i.e., CW complex) 
by way of the equivalence between quasi-categories and complete Segal spaces. 
Throughout this work, we are liberal with our use of model independence 
and typically do not give forewarning of its implementation. 
Additionally, we frequently construct $\infty$-categories by taking finite limits 
of $\infty$-categories, an allowable maneuver precisely because 
the category of complete Segal spaces (or any other equivalent model) 
admits all finite limits and colimits \cite{RV}.

\subsubsection*{Linear overview} 

\S\ref{unital Ran}:
We introduce the main character of this work, the $\infty$-category $\Exit\bigr(\Ranu(M)\bigr)$, 
to encode the configuration spaces of finite, possibly empty subsets of $M$. 
After defining it as a simplicial space, we prove that it is a complete Segal space 
by identifying it as one derived through formal constructions among complete Segal spaces 
from the $\infty$-category of constructible bundles from \cite{AFR}.
Along the way we define the $\infty$-subcategory $\Exit\bigl(\Ran(M)\bigr)$ 
and show that consequently, it, too, is a complete Segal space.
We conclude this section by restating our main result in its full generality as Theorem~\ref{loc}, 
which states that $\mathbf{\Theta}_n^\Act$ localizes to $\Exit\bigl(\Ranu(\R^n)\bigr)$ 
over the (opposite) category of finite sets. 

\S\ref{step 1}:
We introduce the $\infty$-category $\Exit\bigl(\Ranu(\un{\R}^n)\bigr)$ to interpolate 
between $\mathbf{\Theta}_n^\Act$ and $\Exit\bigl(\Ranu(\R^n)\bigr)$.
It encodes the configuration spaces of finite, possibly empty subsets of $\R^n$ 
each stratified by its Fox-Neuwirth cell decomposition. 
We show that it is a complete Segal space using techniques similar to those in \S\ref{unital Ran}. 

\S\ref{ah}:
This section is the first step toward proving Theorem~\ref{loc}.
We prove that the $\infty$-category $\Exit\bigl(\Ranu(\un{\R}^n)\bigr)$
is equivalent to the category $\mathbf{\Theta}_n^\Act$ over the (opposite) category of finite sets. 
We construct the functor inductively and
show that it is essentially surjective and fully faithful by induction. 

\S\ref{step2}:
This section is the second and final step towards the proof of Theorem~\ref{loc} 
and is the technical heart of our result. 
We show that the natural forgetful functor from $\Exit\bigl(\Ranu(\un{\R}^n)\bigr)$ 
to $\Exit\bigl(\Ranu(\R^n)\bigr)$ is a localization over the (opposite) category of finite sets. 
It localizes on all those morphisms that induce bijections between their underlying finite sets, 
namely, all those morphisms that get sent to isomorphism in $\Exit\bigl(\Ranu(\R^n)\bigr)$.
Our argument is built around Theorem~\ref{BcSS} from \cite{AMG}, 
which identifies localizations of $\infty$-categories in favorable cases. 
We implement Theorem~\ref{BcSS} by proving two lemmas, 
the first of which holds the technical heart of the paper. 
Central to the first lemma is that the unordered configuration space 
$\Conf_r(\R^n)_{\Sigma_r}$ stratified by its
Fox-Neuwirth cell decomposition is conically smooth (Cor. \ref{conically smooth}). 
This result allows us to apply Ayala-Francis-Tanaka-Rozenblyum's 
exit-path $\infty$-category functor from \cite{AFT, AFR} to $\Conf_r(\un{\R}^n)_{\Sigma_r}$.
The main advantage in employing their exit-path functor is that we gain hands-on access 
to the background stratified space, namely, the configuration space $\Conf_r(\R^n)_{\Sigma_r}$, 
which allows us to run some key homotopy equivalences.
Note that we do not have this luxury with our main two exit-path 
$\infty$-categories $\Exit\bigl(\Ranu(M)\bigr)$ 
and $\Exit\bigl(\Ranu(\un{\R}^n)\bigr)$ as they technically 
do not have stratified spaces underlying them. 

\S\ref{bigcor}:
This section is devoted proving Corollary~\ref{introlocexit}
which is the nonunital version of Theorem~\ref{loc}. 
Our method of proof mainly extrapolates the proof of Theorem~\ref{loc}, 
in that it too is built from Theorem~\ref{BcSS} of \cite{AMG}.

\S\ref{stratapp}:
We review the notions of topologically and Whitney stratified spaces. 
We develop two key examples of stratified spaces. 
The first is the configuration space 
of points in $\R^n$ labeled by a set $A$ stratified by its Fox-Neuwirth cell decomposition, 
which we eventually show is Whitney stratified.
The second is the Ran space of a smooth, nonempty, connected manifold $M$ stratified by cardinality. 
We also consider the unital Ran space of $M$ and note why it is not a stratified space. 

\S\ref{exit sec}:
This section is a review of exit-path $\infty$-category constructions notable to this work, 
namely Lurie's \cite{Lu1} and Ayala-Francis-Tanaka-Rozenblyum's \cite{AFR, AFT}. 
Our main example is the latter's exit-path construction applied to 
the unordered configuration space of $r$ points in $\R^n$ stratified by its Fox-Neuwirth cells.
Key here is that Whitney stratified spaces are conically smooth \cite{NV}.
Thus, the Whitney stratified space $\Conf_r(\un{\R}^n)_{\Sigma_r}$ is conically smooth, 
and is, therefore, input for Ayala-Francis-Tanaka-Rozenblyum's exit-path functor.
We conclude this section with an explanation for why we use neither
exit-path construction to define the three main exit-path $\infty$-categories of this work. 

\S\ref{Theta sec}:
We recall the wreath product of categories and Segal's gamma functor
in order to define the category $\mathbf{\Theta}_n$ 
as the n-fold wreath product of the simplex category $\mathbf{\Delta}$ with itself.
We also review an important class of morphisms in $\mathbf{\Theta}_n$, namely active morphisms,
which are characterized by their trivial interaction with the basepoint in pointed finite sets.

\subsubsection*{Acknowledgements}
I would like to thank my PhD advisor David Ayala for sharing his time, energy and ideas for this project, 
contributing significantly to my fantastic time as a graduate student in Bozeman, Montana.
Additionally, I would like to thank Eric Berry, Greg Friedman, Ben Knudsen, 
Damien Lejay, Peter May, Ben Moldstad, Dan Perry, and Dev Sinha for many helpful conversations
and unmeasurable support.

\section{Exit-paths in the unital Ran space} \label{unital Ran}
Unless otherwise stated, let $M$ denote a connected, nonempty, smooth manifold 
for the entirety of the paper. 
In this section, we construct two $\infty$-categories to codify configurations of finite subsets of $M$.
The first encodes the configuration spaces of unordered points in $M$ of all finite cardinalities, 
including cardinality zero.
Heuristically, its objects are finite, possibly empty subsets $S$ of $M$, 
and its morphisms witness anticollision and vanishing of points in $M$, but not collision.
Before defining this $\infty$-category, we need the following construction.

\begin{definition}[Def.~6.6.12, \cite{AFR}]  \label{rev} 
The \emph{reversed cylinder} of a map between finite sets $T \to S$ is 
$$\ds {\sf cylr}\left(T \to S\right) := S \coprod_{T \times \{0\}} \left(T  \times \Delta^1\right).$$
More generally, the reversed cylinder of a composable sequence of maps 
between finite sets \\ $S_p \to S_{p-1} \to \cdots \to S_0$ is
$$ \ds  \cylr\left(S_p \to S_{p-1} \to \cdots \to S_0\right) := 
S_0 \coprod_{S_1 \times \{0\}} S_1 \times \Delta^1 \coprod_{S_2 \times \Delta^1} ...
 \coprod_{S_p \times \Delta^{p-1}} \left( S_p \times \Delta^p\right) .$$
\end{definition}

\begin{example} \label{cylr ex} 
Consider the map of sets $T=\{t, t', t''\} \ra S=\{s, s', s''\}$ 
given by $t, t' \mapsto s$ and $t'' \mapsto s'$. 
The coproduct $ \ds S \coprod_{T \times \{0\}}\left(T \times \Delta^1\right) $ 
is depicted by Figure~\ref{cylr pic},
along with its natural projection to $\Delta^1$.

\begin{figure}[ht] 
\begin{center}
\begin{tikzpicture}[scale=.5] 
\draw[->] (0,-1) -- (0, -3);
\filldraw[black]  (-2,2) circle (4pt)  node[anchor=east] {s};
\filldraw[black]  (-2,1) circle (4pt)  node[anchor=east]  {s'};
\filldraw[black]  (-2,0) circle (4pt)  node[anchor=east]  {s''};
\filldraw[black]  (-2,-4) circle (4pt)  node[anchor=east]  {0};
\filldraw[black]  (2,2) circle (4pt)  node[anchor=west]  {t};
\filldraw[black]  (2,1) circle (4pt)  node[anchor=west]  {t'};
\filldraw[black]  (2,0) circle (4pt)  node[anchor=west]  {t''};
\filldraw[black]  (2,-4) circle (4pt)  node[anchor=west]  {1};

\draw
(-2,2) -- (2,2)
(-2,2) -- (2,1)
(-2,1) -- (2,0)
(-2,-4) -- (2,-4);

\end{tikzpicture}
\caption{}
\label{cylr pic}
\end{center}
\end{figure}
\end{example}

\begin{observation} \label{preshff} 
Recall that by definition, the nerve of $\Fin^\op$ is a presheaf on $\mathbf{\Delta}$. 
 Thus, we may define the $\infty$-category $\mathbf{\Delta}$ slice over $\Fin^\op$ 
 as the pullback of $\infty$-categories
\[ \begin{tikzcd}
\mathbf{\Delta}_{/\Fin^\op} \pb \ar{r} \ar{d} & 
{\sf PShv}\left(\mathbf{\Delta}\right)_{/ \Fin^\op} \ar{d} \\
\mathbf{\Delta} \ar{r}{\tx{Yoneda}} & \sf{PShv}\left(\mathbf{\Delta}\right) \end{tikzcd} \] 
where ${\sf PShv}\left(\mathbf{\Delta}\right)$ is the category of presheaves 
on the simplex category $\mathbf{\Delta}$ (Def. \ref{simplex})
and $\Fin^\op$ is the opposite of the category of finite sets (Def. \ref{Fin}). 
 \end{observation}

\begin{definition}\label{exitdef2} 
For a smooth, connected manifold $M$, 
the \textit{exit-path $\infty$-category of the unital Ran space of $M$}, 
$\Exit\bl(\Ranu(M)\br)$, is the simplicial space over $\Fin^\op$ 
representing the presheaf on $\mathbf{\Delta}_{/\Fin^\op}$ whose value on an object 
$$ [p] \xra{<\sigma>} \Fin^\op   $$ 
which selects out a sequence of maps among finite sets 
$\ds \sigma: S_p \to \cdots \to S_0$, is the space of embeddings 
$$\cylr\left(\sigma\right) \hookrightarrow M \times \Delta^p$$ 
over $\Delta^p$ equipped with the compact-open topology; 
the structure maps are evident. \end{definition}

\begin{observation} 
Explicitly, an object of $\Exit\bl(\Ranu(M)\br)$ in the fiber 
over the finite (possibly empty) set $S$ is an embedding, 
$$S\hookrightarrow M.$$  
A morphism from $S \xhookrightarrow{e} M$ to $T \xhra{d} M$ 
over the map of finite sets $T \xra{\sigma} S$ is an embedding,
$$ \cylr\left(T \xra{\sigma} S\right) \xhookrightarrow{E} M \times \Delta^1 $$
over $\Delta^1$ such that $E_{|S}=e$ and $E_{|T\times {1}}= d$. 
\end{observation}

\begin{example}
Any embedding of Figure~\ref{cylr pic} into $M \times \Delta^1$ 
over $\Delta^1$ is a morphism in $\Exit\bl(\Ranu(M)\br)$. 
Such a morphism is a degeneracy morphism 
since the point $s''$ vanishes. 
Also note that the point $s$ anticollides into two points. 
\end{example}

\begin{remark} 
Definition~\ref{exitdef2} can be viewed as a generalization of 
Lurie's exit-path $\infty$-category (Def. \ref{exitdef}) 
of the Ran space of $M$ (Def. \ref{Ran}). 
In particular, it encodes the empty configuration in a nontrivial way, 
namely the space of morphisms from any configuration to the empty 
configuration is always nonempty. 
Definition~\ref{exitdef} cannot be directly extended to 
accommodate this essential feature
which presented one of the main technical difficulties of this work;
see \S\ref{nottrivial} for more details. 
\end{remark}

\begin{remark}
Ayala-Francis-Tanaka-Rozenblyum's exit-path $\infty$-category construction (Def. \ref{AFRexit})
cannot be used to encode $\Exit\bl(\Ranu(M)\br)$ because the 
unital Ran space of $M$ is not a stratified space; 
see \S\ref{unitalRan} and \S\ref{nottrivial} for more details. 
\end{remark}

\begin{observation} \label{exittofin} 
There is a natural forgetful functor
$$\Exit\bl(\Ranu(M)\br) \xra{\phi} \Fin^\op$$ 
the value of which on an object $\ds S \hra M $ is $S$ 
and on a morphism $\ds \cylr\left(J \xra{\sigma} S\right) \hra M \times \Delta^1$ is $\sigma$. 
\end{observation} 

By removing the empty set from $\Exit\bl(\Ranu(M)\br)$ 
and all morphisms that involve the vanishing of points, 
we obtain the other $\infty$-category introduced in this section,
which is defined as follows. 

\begin{definition} \label{little exit} 
The \textit{exit-path $\infty$-category of the Ran space of $M$} 
denoted $\Exit\bigl(\Ran(M)\bigr)$
is the subsimplicial space of $\Exit\bigl(\Ranu(M)\bigr)$, 
obtained as the following pullback of simplicial spaces
\begin{equation} \label{nonunital} 
\begin{tikzcd}
\Exit\bigl(\Ran(M)\bigr) \ar[hookrightarrow]{r} \pb \ar{d} & 
\Exit\bl(\Ranu(M)\br)\ar{d}{\phi} \\
\left(\Fin_{\neq \emptyset}^\surj\right)^\op \ar[hookrightarrow]{r} &  \Fin^\op
\end{tikzcd} 
\end{equation}
where $\Fin^\surj_{\neq \emptyset}$ is the subcategory of finite sets 
consisting of nonempty sets and all those morphisms that are surjections. 
\end{definition}

Informally, an object of $\Exit\bigl(\Ran(M)\bigr)$ is a finite, nonempty subset of $M$ 
and a morphism is a path in $\Ran\left(M\right)$ (Def. \ref{Ran}) 
that may witnesses anticollision of points in $M$, but not collision. 

\begin{remark}
Lurie's exit-path $\infty$-category construction (Def. \ref{exitdef}) 
applied to the Ran space (Def. \ref{Ran}) of $M$ is equivalent to Definition~\ref{little exit}. 
\end{remark}

\subsection{\texorpdfstring{$\Exit\bigl(\Ranu(M)\bigr)$}{Lg}
is an \texorpdfstring{$\infty$}{Lg}-category} \label{unital} 
This subsection is devoted to proving the technical result
that the exit-path $\infty$-category of the unital Ran space 
of a connected smooth manifold $M$, $\Exit\bl(\Ranu(M)\br)$, 
and consequently the exit-path $\infty$-category of the Ran space of $M$, 
$\Exit\bigl(\Ran(M)\bigr)$, is in fact an $\infty$-category.
In particular we show that it is a complete Segal space.
We allow ourselves to freely use notation and results from~\cite{AFR} and 
begin with the definition of a complete Segal space after Rezk in \cite{Rezk2}. 

\begin{definition} \label{cSegspc} 
A simplicial space $\mathbf{\Delta}^\op \xra{F} \spaces$ 
is a \emph{complete Segal space} if it satisfies the following two conditions:
\begin{enumerate}
\item  (Segal Condition) For each $p >1$, the diagram of spaces 
\[ \begin{tikzcd} 
F[p] \ar{r} \pb \ar{d} & F\{p-1<p\} \ar{d} \\
F\{0<\cdots < p-1\} \ar{r} & F\{p-1\} \end{tikzcd} \]
is a pullback.

\item  (Completeness Condition) The diagram of spaces
\[ \begin{tikzcd}
&  & F\left(\ast\right) \ar{dll} \ar{d} \ar{drr} & & \\
F\left(\ast\right) \ar{dr} & & F[3] \ar{dl} \ar{dr} & & F\left(\ast\right) \ar{dl} \\
& F\{0< 2\} & &  F\{1 <3\} & 
\end{tikzcd} \]
is a limit. 
\end{enumerate}
\end{definition}

\begin{prop}\label{t1}
The simplicial space $\Exit\bl(\Ranu(M)\br)$ 
satisfies the Segal and completeness conditions.	
\end{prop}

\begin{cor} \label{wow}
The simplicial space $\Exit\bigl(\Ran(M)\bigr)$ 
satisfies the Segal and completeness conditions.
\end{cor}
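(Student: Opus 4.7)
My plan is to reduce Corollary~\ref{wow} to Proposition~\ref{t1} using the pullback square (\ref{nonunital}) together with the observation that both the Segal and completeness conditions of Definition~\ref{cSegspc} are expressed as requiring that certain canonical comparison maps into iterated limits of spaces be equivalences. Since limits in simplicial spaces are computed levelwise, and since limits commute with limits, the class of simplicial spaces satisfying either condition is closed under arbitrary limits in the $\infty$-category of simplicial spaces, and in particular under pullback.

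Granted this stability, it will suffice to verify that each of the other three corners of (\ref{nonunital}) is a complete Segal space. For $\Exit\bigl(\Ranu(M)\bigr)$ this is exactly the content of Proposition~\ref{t1}. The remaining two corners, $\bigl(\Fin_{\neq \emptyset}^\surj\bigr)^\op$ and $\Fin^\op$, are nerves of ordinary categories regarded as simplicial spaces with discrete level-spaces, and for these it is classical that the nerve of any $1$-category is a complete Segal space: the Segal condition expresses unique composability of strings of morphisms, and the completeness condition is automatic since the space of isomorphisms out of any object in a $1$-category is a discrete set.

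Assembling these inputs, for each $p \geq 2$ the Segal comparison map for $\Exit\bigl(\Ran(M)\bigr)$ at level $[p]$ arises as the pullback of the Segal maps at level $[p]$ for the three other corners of (\ref{nonunital}), each of which is an equivalence by the preceding paragraph; the pullback of equivalences of spaces is an equivalence, so the Segal condition holds. The identical argument applied to the limit diagram appearing in Definition~\ref{cSegspc}(ii) gives completeness. I do not anticipate any real obstacle: all of the substantive geometric content has been placed in Proposition~\ref{t1}, and what remains is a formal manipulation of the pullback square (\ref{nonunital}) together with the elementary fact that nerves of $1$-categories are complete Segal spaces.
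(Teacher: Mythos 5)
Your overall strategy matches the paper's: invoke the pullback square (\ref{nonunital}) and appeal to closure of complete Segal spaces under pullbacks. However, there is a genuine error in the step where you assert that ``it is classical that the nerve of any $1$-category is a complete Segal space.'' This is false. Taking the nerve of a $1$-category $\C$ as a simplicial space with \emph{discrete} level-spaces does satisfy the Segal condition, but it satisfies the completeness condition of Definition~\ref{cSegspc}(ii) if and only if $\C$ is \emph{gaunt} (the only isomorphisms are identities). Unwinding the limit in condition (ii), the fiber product $F(\ast) \times_{F\{0<2\}} F[3] \times_{F\{1<3\}} F(\ast)$ computes precisely the set of isomorphisms of $\C$ (a point is a $3$-chain $x_0 \xra{f_1} x_1 \xra{f_2} x_2 \xra{f_3} x_3$ with $f_2 f_1 = \Id$ and $f_3 f_2 = \Id$, which forces $f_1 = f_3$ to be an isomorphism inverse to $f_2$), while $F(\ast)$ is the set of objects; the comparison map $a \mapsto \Id_a$ is a bijection only when $\C$ is gaunt. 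But $\Fin^\op$ and $\bl(\Fin^\surj_{\neq \emptyset}\br)^\op$ have nontrivial automorphisms (set permutations), so their ordinary nerves with discrete levels are \emph{not} complete Segal spaces, and the pullback-of-CSS argument does not literally apply to (\ref{nonunital}) as you have set it up.

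The intended fact is nonetheless true, and there are two ways to repair the argument. One is to work model-independently (as the paper implicitly does): regard $\Fin^\op$ and its subcategory as complete Segal spaces via the Rezk classifying diagram rather than the ordinary nerve, at the price that the levelwise pullback is no longer literally the one you described. The other is to argue directly that the conditions persist under passing to the sub-simplicial space: the Segal condition restricts for the reason you gave, and for completeness one must add the observation that any equivalence in $\Exit\bl(\Ranu(M)\br)$ lies over a \emph{bijection} of finite sets, hence automatically over a surjection between nonempty sets whenever its source is an object of $\Exit\bl(\Ran(M)\br)$, so the space of equivalences in $\Exit\bl(\Ran(M)\br)$ coincides with the restriction of that in $\Exit\bl(\Ranu(M)\br)$. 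Without one of these fixes, the step in your third paragraph asserting that the completeness comparison map for $\Exit\bl(\Ran(M)\br)$ is a pullback of equivalences does not go through, because the corresponding maps for the two bottom corners are not equivalences.
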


\begin{proof}
By Definition~\ref{little exit}, $\Exit\bigl(\Ran(M)\bigr)$ is the pullback of the diagram
\[ \begin{tikzcd}
\Exit\bigl(\Ran(M)\bigr) \ar[hookrightarrow]{r} \pb \ar{d} & \Exit\bl(\Ranu(M)\br)\ar{d} \\
\left(\Fin_{\neq \emptyset}^\surj\right)^\op \ar[hookrightarrow]{r} &  \Fin^\op
\end{tikzcd}\]
of simplicial spaces.
The result follows because the full $\infty$-subcategory 
of simplicial spaces consisting of the complete Segal spaces 
is closed under the formation of pullbacks.  
\end{proof}

The idea for the proof of Proposition~\ref{t1} is to witness the simplicial space 
$\Exit\bl(\Ranu(M)\br)$ as one derived through formal constructions 
among complete Segal spaces from the complete Segal space of 
\emph{constructible bundles} $\Bun$ (Def. 6.3.6, \cite{AFR}).
$\Bun$ is a complete Segal space 
whose value on $[p]$ is the moduli space of 
constructible bundles over $\Delta^p$ with the 
standard stratification (Ex. \ref{standard}).  
The simplicial structure maps are implemented by base change of constructible bundles.  

The space of objects in $\Bun$ is the moduli space of constructible bundles over 
$\Delta^0=\ast$, which, in fact, is the moduli space of stratified spaces.
In other words, an object of $\Bun$ is a stratified space.
Thus, a finite set is an example of an object in $\Bun$, 
and a smooth manifold is an example of an object in $\Bun$ as well.  
Lemma~6.3.11 of~\cite{AFR} uses the reverse cylinder 
construction (Def.~\ref{rev}) to construct a fully faithful functor
\[
\Fin_\ast^{\op}
\longrightarrow
\Bun
\]
whose image consists of finite sets.  
In particular, there is a composite monomorphism 
\begin{equation}\label{e1}
\Fin^{\op}
\xra{~\left(-\right)_+~}
\Fin_\ast^{\op}
\longrightarrow
\Bun
\end{equation}
of $\infty$-categories.
Note that a $[p]$-point $X\to \Delta^p$ of $\Bun$ factors through (\ref{e1}) 
if and only if $X\to \Delta^p$ is a finite proper constructible bundle.  

Lemma~3.31 of~\cite{AFR2} constructs the \emph{$k$-skeleton} functor
\[
\sk_k\colon \Bun
\longrightarrow
\Bun
\]
for each dimension $k$.
Explicitly, the value on a stratified space $X$ is the proper constructible 
stratified subspace $\sk_k\left(X\right) \subset X$ that is the union of the strata whose dimension is at most $k$.
The value of $\sk_k$ on a $[p]$-point $X\to \Delta^p$ of $\Bun$ is the constructible bundle 
\[
\sk_k^{\sf fib}\left(X\right)
\longrightarrow 
\Delta^p
\]
which is the \emph{fiberwise $k$-skeleton} of $X$: 
the union of those strata of $X$ whose projection to $\Delta^p$ have fiber-dimension at most $k$.

Note that $\sk_0$ factors through $\Fin_\ast$:
\[
\sk_0\colon \Bun
\longrightarrow
\Fin_\ast^{\op}
\hookrightarrow
\Bun.
\]

Consider the $\infty$-category $\cS{\sf trat}^{\sf ref}$ underlying 
the topological category in which an object is a stratified space 
and the space of morphisms is that of \emph{refinements}.

\begin{definition}[Def. 3.6.1, \cite{AFT}] \label{refinement} 
A map of stratified spaces $\left(X \to P\right)  \xra{f} \left(Y \to Q\right)$ is a \emph{refinement} 
if $f$ is a homeomorphism between the underlying topological spaces, 
and if for each $p \in P$ the restriction of $f$ to each stratum $X_p$  is an embedding into $Y$. 
\end{definition}
  
Section~\S6.6 of~\cite{AFR} constructs the \emph{open cylinder} functor between $\infty$-categories
\begin{equation} \label{e}
{\sf Cylo}\colon \cS{\sf trat}^{\sf ref}
\longrightarrow
\Bun
\end{equation}
which is an equivalence on spaces of objects.  
Theorem~6.6.15 of~\cite{AFR} verifies that this functor is a monomorphism.  
So each refinement between stratified spaces defines a morphism in $\Bun$.
As a matter of notation, a morphism in $\Bun$ 
that is in the image of this functor is called a \emph{refinement}; 
the $\infty$-category of \emph{refinement arrows} in $\Bun$ is the full $\infty$-subcategory
\begin{equation} \label{ref arrows}
\Ar^{\sf ref}\left(\Bun\right)
~\subset~
\Ar\left(\Bun\right)
\end{equation}
consisting of the refinements arrows.  
Evaluation at source-target defines a functor
\[
\left({\sf ev}_s,{\sf ev}_t\right)\colon
\Ar^{\sf ref}\left(\Bun\right)
\longrightarrow
\Bun\times \Bun .
\]
Denote the pullback $\infty$-category:
\begin{equation}  \begin{tikzcd} \label{Ref}
\RRef^0(M) \ar{rr} \ar{dd}   & & \Ar^{\sf ref}\left(\Bun\right) \ar{d}{\left(\ev_s, \ev_t\right)} \\
&& \Bun \times \Bun \ar{d}{\sk_{n-1} \times \Id} \\
\Fin^\op \ar{r}{=} & \Fin^\op \times \ast \ar{r}{\left(\ref{e1}\right) \times \langle M \rangle} & \Bun \times \Bun.
\end{tikzcd} 
\end{equation}
Unpacking this definition and using the \emph{open cylinder} construction
(\ref{e}), we see that $\RRef^0(M)$ is a simplicial space whose value 
on $[p]\in \mathbf{\Delta}$ is the moduli space of 
\begin{itemize}
\item
constructible bundles
\[
X
\longrightarrow
\Delta^p
\]
for which the $\left(n-1\right)$-skeleton of each fiber of which is a finite set,
\item
together with a refinement 
\[
X
\xra{~\rm refinement~}
M\times \Delta^p
\]
over $\Delta^p$.
\end{itemize}

We will denote such a $[p]$-point of $\RRef^0(M)$ 
simply as $\left(X \xra{\sf ref} M\times \Delta^p\right)$. 
Example~2.1.7 of \cite{AFT} shows that the product of stratified spaces 
is naturally a stratified space.
We consider $M \times \Delta^p$ as a product stratified space, 
where $M$ is trivially stratified over the poset with a singleton, 
and $\Delta^p$ is given the standard stratification (Ex.~\ref{standard}).

Informally, an object in $\RRef^0(M)$ is a refinement of $M$ 
in which the $\left(n-1\right)$-skeleton of the domain is a finite set, 
and a morphism in $\RRef^0(M)$ is a path of such refinements of $M$ 
witnessing anticollision and disappearances of strata.  

\begin{observation} 
There is a natural forgetful functor 
\[ \RRef^0(M) \lra \Fin^\op
\]
to the opposite of the category of finite sets, the value of which on an object 
$X \xra{{\sf ref}} M$ is the underlying set of the $\left(n-1\right)$-skeleton of $X$, 
and on a morphism $X \xra{{\sf ref}} M \times \Delta^1$ 
is the canonical assignment between sets, 
from the $\left(n-1\right)$-skeleton of the fiber of $X \to \Delta^1$ over $\{1\} \in \Delta^1$ 
to the $\left(n-1\right)$-skeleton of the fiber over $\{0\} \in \Delta^1$, 
implemented by taking connected components of the $\left(n-1\right)$-skeleton of $X$. 
In other words, taking connected components of the 
fiberwise $\left(n-1\right)$-skeleton of $X$ induces a canonical assignment of sets
\[ \sk_{n-1}^{{\fib}}\left(X_{|1}\right) \lra \sk_{n-1}^{\fib}\left(X_{|0}\right) 
\]
from the $\left(n-1\right)$-skeleton of the target of $X$ to the $\left(n-1\right)$-skeleton of the source of $X$.
\end{observation}

\begin{lemma}\label{l1}
There is a canonical equivalence between simplicial spaces
\[
\RRef^0(M)
{~\simeq~}
\Exit\bl(\Ranu(M)\br)
\]
over $\Fin^\op$.
\end{lemma}

\begin{proof}
A rightward morphism is implemented by, for each $[p]\in \mathbf{\Delta}$, the assignment,
\[
\left(X\xra{\sf ref} M\times \Delta^p \right)
\mapsto
\left( \sk^{\sf fib}_{n-1}\left(X\right) \hookrightarrow X \to  M\times \Delta^p \right)
~,
\]
whose value is the embedding over $\Delta^p$ from the fiberwise $\left(n-1\right)$-skeleton, 
which maps to $\Delta^p$ as a finite proper constructible bundle.
A leftward morphism is implemented by, for each $[p]\in \mathbf{\Delta}$, the assignment,
\[
\left( {\sf Cylr}\left(\sigma\right) \hookrightarrow M\times \Delta^p \right)
\mapsto 
\left( \left( {\sf Cylr}\left(\sigma\right) \subset M\times \Delta^p \right)
\xra{~\sf ref~}
M\times \Delta^p \right)
~,
\]
whose value is the coarsest refinement of $M\times \Delta^p$ 
for which the embedding from ${\sf Cylr}\left(\sigma\right)$ is a proper and constructible.  
(Such a refinement exists because the image of this embedding is, 
by definition, a properly embedded stratified subspace.)

It is straightforward to verify that these two assignments are mutually inverse to one another, 
and further, that they are both over $\Fin^\op$. 
Furthermore, it is evident that the structure maps are equivalent. 
\end{proof}

\begin{proof}[Proof of Proposition~\ref{t1}]
Being an $\infty$-category, the simplicial space $\RRef^0(M)$ 
satisfies the Segal and completeness conditions.
Through the equivalence of Lemma~\ref{l1}, 
then so does the simplicial space $\Exit\bl(\Ranu(M)\br)$.
\end{proof}

\subsection{Statement of main result}
We restate our main result in its full generality,
which is a combinatorial identification of the $\infty$-category 
$\Exit\bl(\Ranu(M)\br)$ in the case of $M=\R^n$
in terms of the subcategory of 
$\mathbf{\Theta}_n$ consisting of active morphisms (Def. \ref{act}). 

\begin{theorem} \label{loc}  
For $n \geq 1$ there is a localization of $\infty$-categories
$$ \mathbf{\Theta}_n^\Act \to \Exit\bl(\Ranu(\R^n)\br)$$
which is contravariant over the category of finite sets. 
\end{theorem}

\section{Exit-paths in the Fox-Neuwirth unital Ran space of \texorpdfstring{$\R^n$}{Lg}} \label{step 1}
The goal of this section is to construct an $\infty$-category
to interpolate between the category $\mathbf{\Theta}_n^\Act$ 
and the $\infty$-category $\Exit\bl(\Ranu(\R^n)\br)$,
the source and target, respectively, of Theorem~\ref{loc}.
This $\infty$-category will
encode all of the configuration spaces of $\R^n$ (including the empty one)
each stratified by their Fox-Neuwirth cell decompositions (\S\ref{FNRanu}).

\begin{observation} 
Let $\Fun\left(\{1<\cdots<n\}, \Fin^\op\right)$ denote the category of functors  
from the linearly ordered set $\{1<\cdots<n\}$ regarded as a category (Term. \ref{poset})
to the opposite of the category of finite sets $\Fin^\op$ (Def. \ref{Fin}).
Recall that by definition the nerve of 
$\Fun\left(\{1<\cdots<n\}, \Fin^\op\right)$
 is a presheaf on $\mathbf{\Delta}$. 
 Thus, we may define the $\infty$-category 
 $\mathbf{\Delta}$ slice over $\Fun\left(\{1<\cdots<n\}, \Fin^\op\right)$ 
 as the following pullback
\[ \begin{tikzcd}
\mathbf{\Delta}_{/ \Fun\left(\{1<\cdots<n\}, \Fin^\op\right)} \pb \ar{r} \ar{d} 
& {\sf PShv}\left(\mathbf{\Delta}\right)_{/ \Fun\left(\{1<\cdots<n\}, \Fin^\op\right)} \ar{d} \\
\mathbf{\Delta} \ar{r}{\tx{Yoneda}} & \sf{PShv}\left(\mathbf{\Delta}\right) 
\end{tikzcd} \] 
where ${\sf PShv}\left(\mathbf{\Delta}\right)$ is the category of presheaves on the simplex category.
 \end{observation}

\begin{definition} \label{underline} 
The \textit{exit-path $\infty$-category of the Fox-Neuwirth unital Ran space of $\R^n$} denoted
$\Exit\bl(\Ranu(\un{\R}^n)\br)$  is the simplicial space over  
$\ds \Fun\left(\{1<\cdots<n\}, \Fin^\op\right)$ representing the presheaf on 
$ \ds \mathbf{\Delta}_{/ \Fun\left(\{1<\cdots<n\}, \Fin^\op\right)}$ whose value on an object 
$$ [p] \to  \Fun\left(\{1<\cdots<n\}, \Fin^\op\right)   $$ 
which selects a diagram of finite sets
\begin{equation} \label{ppointover} 
\begin{tikzcd} 
 \sigma_n:  &[-30pt]  S^p_n \arrow{r} \ar{d}  &  \cdots  \arrow{r} & S^0_n \ar{d} \\
& \vdots \ar{d} &  \vdots & \vdots \ar{d} \\
\sigma_1: & S^p_1 \arrow{r}   &  \cdots  \arrow{r} & S^0_1
\end{tikzcd} 
\end{equation}
is the space of compatible embeddings 
\begin{equation} \label{ppoint} \ds
\begin{tikzcd}
{\sf cylr}\left(\sigma_n\right) \ar[hookrightarrow]{r}{E_n} \ar{d}  
& \R^n \times \Delta^p \ar[twoheadrightarrow]{d}{\tx{pr}_{< n} \times \tx{id}_{\Delta^p}} \\
{\sf cylr}\left(\sigma_{n-1}\right) \ar[hookrightarrow]{r}{E_{n-1}} \ar{d}  
& \R^{n-1} \times \Delta^p \ar[twoheadrightarrow]{d}{\tx{pr}_{< n-1} \times \tx{id}_{\Delta^p}} \\
\vdots \ar{d} &  \vdots \ar[twoheadrightarrow]{d} \\
{\sf cylr}\left(\sigma_1\right) \ar[hookrightarrow]{r}{E_1}   & \R \times \Delta^p
\end{tikzcd}
\end{equation} 
where each embedding is over $\ds \Delta^p$ 
and the downward arrows on the lefthand side 
are induced by the downward arrows of (\ref{ppointover}). 
This embedding space is given the compact-open topology; 
the structure maps are evident. 

Observation~\ref{phii} defines a canonical functor from 
$\Exit\bl(\Ranu(\un{\R}^n)\br)$ to  $\ds \Fun\left(\{1<\cdots<n\}, \Fin^\op\right)$.   
\end{definition}

\begin{observation} 
Explicitly, an object of $\Exit\bl(\Ranu(\un{\R}^n)\br)$ 
over the sequence  of finite sets,
\begin{equation} \label{ss} 
S_n \xra{\tau_{n-1}} S_{n-1} \xra{\tau_{n-2}}  \cdots \ra S_1 
\end{equation}
 is a sequence of embeddings,
\begin{equation} \label{objRn} \ds
\begin{tikzcd}
S_n \ar[hookrightarrow]{r}{e_n} \ar{d}[swap]{\tau_{n-1}}  
& \R^n  \ar[twoheadrightarrow]{d}{\tx{pr}_{< n}}  \\
S_{n-1} \ar[hookrightarrow]{r}{e_{n-1}} \ar{d}[swap]{\tau_{n-2}}  
& \R^{n-1}  \ar[twoheadrightarrow]{d}{\tx{pr}_{< n-1} } \\
\vdots \ar{d} &  \vdots \ar[twoheadrightarrow]{d} \\
S_1 \ar[hookrightarrow]{r}{e_1}  & \R .
\end{tikzcd}
\end{equation}
\noindent When the context is clear, we denote (\ref{objRn}) 
by $\un{S} \xhra{\un{e}} \un{\R}^n$ or just $\un{e}$. 

A morphism from $\ds \un{S} \xhra{\un{e}} \un{\R}^n$ to 
$\ds \un{T} \xhra{\un{d}} \un{\R}^n$ over the diagram of finite sets,
\begin{equation} \label{seqofmaps} \begin{tikzcd} 
 T_n \ar{d}[swap]{\omega_{n-1}} \ar{r}{\sigma_n} & S_n \ar{d}{\tau_{n-1}} \\
 T_{n-1} \ar{d}[swap]{\omega_{n-2}} \ar{r}{\sigma_{n-1}} & S_{n-1} \ar{d}{\tau_{n-2}} \\ 
\vdots \ar{d} & \vdots \ar{d} \\
  T_1 \ar{r}{\sigma_1} & S_1 
\end{tikzcd}
\vspace{.7cm}
\end{equation}
is a sequence of embeddings,
\begin{equation} \label{mor} \begin{tikzcd}
{\sf cylr}\left(\sigma_n\right) \ar[hookrightarrow]{r}{E_n} \ar{d}  
& \R^n \times \Delta^1 \ar[twoheadrightarrow]{d}{\tx{pr}_{< n} \times \tx{id}_{\Delta^1}}  \\
{\sf cylr}\left(\sigma_{n-1}\right) \ar[hookrightarrow]{r}{E_{n-1}} \ar{d}  
& \R^{n-1} \times \Delta^1 \ar[twoheadrightarrow]{d}{\tx{pr}_{< n-1} \times \tx{id}_{\Delta^1}} \\
\vdots \ar{d} &  \vdots \ar[twoheadrightarrow]{d} \\
{\sf cylr}\left(\sigma_1\right) \ar[hookrightarrow]{r}{E_1}   
& \R \times \Delta^1 \\
\end{tikzcd}
\end{equation}
\noindent over $\Delta^1$ such that ${E_i}_{|S_i}=e_i$ 
and ${E_i}_{|T_i\times \{1\}}= d_i$, for each $1\leq i \leq n$. 
When the context is clear, we denote (\ref{mor}) 
by $\cylr\left(\un{\sigma}\right) \xhra{\un{E}} \un{\R}^n \times \Delta^1$ or $ \un{E}$. 
\end{observation}

Heuristically an object consists of a finite, possibly 
empty subset of $\R^i$ for each $1 \leq i \leq n$
together with maps between them
which agree with the projection maps $\R^i \to \R^{i-1}$, 
but are not limited to projection only. 
In particular, if in the sequence $S_n \xra{\tau_{n-1}} S_{n-1} \xra{\tau_{n-2}}  \cdots \ra S_1$,
$\tau_{i-1}$ is not a surjection, then there are points in $S_{i-1} \subset \R^{i-1}$
that are not from the projection of $S_i \subset \R^i$ to $\R^{i-1}$, 
namely those points in $S_{i-1}$ whose preimage under $\tau_{i-1}$ is empty. 

Similarly a morphism in $\Exit\bl(\Ranu(\un{\R}^n)\br)$ 
consists of a morphism in $\Exit\bigl(\Ranu(\R^i)\bigr)$ 
for each $1 \leq i \leq n$, the collection of which are compatible under, 
but not limited to projection.
The data of a morphism for all $i < n$ captures 
the fact that morphisms in $\Exit\bl(\Ranu(\un{\R}^n)\br)$ 
distinguish between the Fox-Neuwirth cells in each 
configuration space of $\R^n$ of fixed cardinality.
In other words, its morphisms witness sets of points moving 
from more coordinate coincidence to less, but not vice-versa, 
in addition to anticollision and vanishing of points. 

\begin{notation} 
We denote a point in the $[p]$-space of 
$\Exit\bl(\Ranu(\un{\R}^n)\br)$ over (\ref{seqofmaps}) by 
$$\cylr\left(\un{\sigma}\right) \xhra{\un{E}} \un{\R}^n \times \Delta^p.$$ 
\end{notation}

\begin{observation} \label{neat}
A morphism in $\Exit\bl(\Ranu(\R^n)\br)$ is an isomorphism if and only if 
it is entirely contained in a configuration space of fixed cardinality. 
In contrast a morphism in $\Exit\bl(\Ranu(\un{\R}^n)\br)$ is an isomorphism if and only if 
it is entirely contained in a single Fox-Neuwirth cell of a configuration space of fixed cardinality. 
\end{observation}

\begin{remark}
Ayala-Francis-Rozenblyum-Tanaka's exit-path $\infty$-category
of the Fox-Neuwirth cell stratification $\Conf_r\left(\un{\R}^n\right)_{\Sigma_r}$ (\S\ref{exitFN})
is an $\infty$-subcategory of $\Exit\bl(\Ranu(\un{\R}^n)\br)$.
\end{remark}

\begin{remark}
Ayala-Francis-Rozenblyum-Tanaka's exit-path $\infty$-category construction (Def. \ref{AFRexit})
cannot be used to encode $\Exit\bl(\Ranu(\un{\R}^n)\br)$ because the 
unital Ran space of $\R^n$ does not emit a stratification by cardinality and the Fox-Neuwirth cells;
see \S\ref{FNRanu} and \S\ref{exitFNRanu} for more details. 
\end{remark}

\begin{observation} \label{phii} 
For each $\ds 1 \leq i \leq n$, there is a natural forgetful contravariant functor 
to finite sets $$\ds  \phi_i: \Exit\bl(\Ranu(\un{\R}^n)\br) \lra \Fin^{\op}$$ 
that forgets all but the set data at the $\R^i$ level. 
Its value on an object $\ds \un{S} \xhra{\un{e}} \un{\R}^n$ is $S_i$ and on a morphism 
$$\ds \cylr\left(\un{T} \xra{\un{\sigma}} \un{S}\right) \xhra{\un{E}} \un{\R}^n \times \Delta^1 $$
 from $\un{S} \hra \un{\R}^n$ to $\un{T} \hra \un{\R}^n$ is the map of finite sets 
$$\ds T_i \xra{\sigma_i} S_i.$$ 

The collection of functors $\{\phi_i\}_{i=1}^n$ naturally 
compile to name a canonical functor 
$$\Phi_n: \Exit\bl(\Ranu(\un{\R}^n)\br) \lra \Fun\left(\{1 < \cdots n\}, \Fin^\op\right)$$ 
which just remembers the underlying set data.
Namely, its value on an object $\un{S} \hra \un{\R}^n$ 
is the functor which selects out the composable sequence 
of maps of finite sets $\un{S}$ and its value on a morphism 
$\ds \cylr\left(\un{T} \xra{\un{\sigma}} \un{S}\right) \xhra{\un{E}} \un{\R}^n \times \Delta^1$ 
is the commutative diagram of finite sets $\un{T} \xra{\un{\sigma}} \un{S}$.   
\end{observation}

\begin{observation} \label{forget} 
There is a natural forgetful functor 
$$ \F: \Exit\bl(\Ranu(\un{\R}^n)\br) \lra \Exit\bl(\Ranu(\R^n)\br)$$ 
over $\Fin^\op$ induced by the functor from  
$\Fun\left(\{1<\cdots<n\}, \Fin^\op\right)$ to $\Fin^\op$ that evaluates on $\{n\}$. 
The value of a $[p]$-value $\cylr\left(\un{\sigma}\right) \xhra{\un{E}} \un{\R}^n \times \Delta^p$ 
over (\ref{ppointover}) is the embedding of $\un{E}$ at the $\R^n$ level
 $$\ds \cylr\left(\sigma_n\right) \xhookrightarrow{E_n} \R^n \times \Delta^p$$ 
over $\sigma_n: S_n^p \to \cdots \to S_n^0$. 
\end{observation}

\begin{observation} \label{rho} 
There is a natural forgetful functor 
$$\ds \rho: \Exit\bl(\Ranu(\un{\R}^n)\br) \lra \Exit\left(\Ranu\left(\un{\R}\right)\right)$$ 
that forgets all but the first coordinate data. 
The image of a $[p]$-value 
$$\cylr\left(\un{\sigma}\right) \xhra{\un{E}} \un{\R}^n \times \Delta^p$$
 over (\ref{ppointover}) under $\rho$ is 
$$\cylr\left(\sigma_1\right) \xhra{E_1} \R \times \Delta^p$$
 defined over $\sigma_1: S^p_1 \to \cdots \to S^0_1$. 
 \end{observation}

\begin{observation} \label{pie}  
Recall the wreath product of $\infty$-categories 
(Def. \ref{wreath product}, Def. \ref{wreath2}, \& Note \ref{inftwreath}). 
There is a natural forgetful functor 
$$ \ds \pi: \Exit\bl(\Ranu(\un{\R}^n)\br) \lra 
\Fin^\op \wr \Exit\left(\Ranu\left(\un{\R}^{n-1}\right)\right)$$ 
which remembers the $i$th-coordinate data for each $2 \leq i \leq n$
as fibers over the first coordinate data forgotten to $\Fin^\op$
via the forgetful functor $\phi_1$ (Obs. \ref{phii}). 
The value of $\pi$ on a $[p]$-point 
$\ds \cylr\left(\un{\sigma}\right) \xhra{\un{E}} \un{\R}^n \times \Delta^p$ 
(\ref{ppoint}) over the diagram (\ref{ppointover}) is twofold:
\begin{enumerate}
\item[i.] First, it is the data of the composable string of morphisms 
$\sigma_1: S^p_1 \to \cdots \to S^0_1$ in $\Fin$ from (\ref{ppointover}).
\item[ii.] Second, it is the data of the $[p]$-point in 
$\ds \Exit\left(\Ranu\left(\un{\R}^{n-1}\right)\right)$ obtained as follows. 
For each $\left(p+1\right)$-tuple $\left(s_p \in S^p_1, ... , s_0 \in S^0_1\right)$
such that $\sigma_1\left(s_i\right)=s_{i-1}$ for all $1 \leq i \leq p$,
consider the sequence of embeddings of
the fibers over the cylinder of the restriction of $\sigma_1$
to $s_p \mapsto \cdots \mapsto s_0$, $\cylr\left({\sigma_1}_{|_{s_p}}\right)$:
\begin{equation} \label{pbmor} \ds
\begin{tikzcd}
\cylr\left(\sigma_n\right)_{|_{\sigma_1\left(s_p\right)}} \ar[hookrightarrow]{r} 
\ar[dr, phantom, "\lrcorner", very near start] \ar{d} &  
\cylr\left(\sigma_n\right) \ar[hookrightarrow]{r} \ar{r}\ar{d}  & 
\R^n \times \Delta^p \ar[twoheadrightarrow]{d}{\tx{pr}_{< n} } \\
\cylr\left(\sigma_{n-1}\right)_{|_{\sigma_1\left(s_p\right)}}  
\ar[dr, phantom, "\lrcorner", very near start] \ar[hookrightarrow]{r} \ar{d} & 
\cylr\left(\sigma_{n-1}\right) \ar[hookrightarrow]{r} \ar{r}\ar{d}  & 
\R^{n-1} \times \Delta^p \ar[twoheadrightarrow]{d}{\tx{pr}_{< n-1} } \\
\vdots \ar{d} & \vdots \ar{d} &  \vdots \ar[twoheadrightarrow]{d} \\
\cylr\left(\sigma_2\right)_{|_{\sigma_1\left(s_p\right)}} \ar[hookrightarrow]{r} 
\ar[dr, phantom, "\lrcorner", very near start] \ar{d} &  
\cylr\left(\sigma_2\right) \ar[hookrightarrow]{r} \ar{r}\ar{d}  & 
\R^2 \times \Delta^p \ar[twoheadrightarrow]{d}{\tx{pr}_{< 2} } \\
\cylr\left({\sigma_1}_{|_{s_p}}\right)\simeq \Delta^p \ar[hookrightarrow]{r} & 
\cylr\left(\sigma_1\right) \ar[hookrightarrow]{r}   & \R \times \Delta^p.
\end{tikzcd} 
\end{equation} 
Although this sequence of embeddings defines 
a $[p]$-point in $\Exit\left(\Ranu\left(\un{\R}^{n}\right)\right)$, 
it canonically determines a $[p]$-point in 
$\ds \Exit\left(\Ranu\left(\un{\R}^{n-1}\right)\right)$ because
each embedding 
$$\ds \cylr\left(\sigma_i\right)_{|_{\sigma_1\left(s_p\right)}} \hookrightarrow \R^i \times \Delta^p$$ 
for each $\ds 2 \leq i \leq n$
together with the first embedding
$$\cylr\left({\sigma_1}_{|_{s_p}}\right)\simeq \Delta^p \hookrightarrow \R \times \Delta^p$$
canonically factors through $\ds \R^{i-1}\times \Delta^p$.
\end{enumerate}  
\end{observation}

\subsection{\texorpdfstring{$\Exit\bigl(\Ranu(\protect\un{\R}^n)\bigr)$}{Lg}
is an \texorpdfstring{$\infty$}{Lg}-category} \label{yii}
 This section is devoted to proving the technical result that the simplicial space 
 $\Exit\bl(\Ranu(\un{\R}^n)\br)$ is a complete Segal space (Def. \ref{cSegspc}). 
 
\begin{prop}\label{poo}
The simplicial space $\Exit\bl(\Ranu(\un{\R}^n)\br)$ 
satisfies the Segal and completeness conditions.	
\end{prop}

We build directly on \S\ref{unital}, 
 wherein we show that $\Exit\bl(\Ranu(M)\br)$ is a complete Segal space. 
 Our approach in \S\ref{unital} witnesses the simplicial space 
 $\Exit\bl(\Ranu(M)\br)$ as one derived through formal constructions 
 among complete Segal spaces from the complete Segal space 
 $\Bun$ defined in~\S6 of~\cite{AFR}. 
 Our approach in this section is similar in that we show that 
 $\Exit\bl(\Ranu(\un{\R}^n)\br)$, too, can be derived through formal constructions 
 among complete Segal spaces from $\Bun$. 
 Just as in \S\ref{unital}, we freely use notation and results of \cite{AFR}
 throughout the section.
 
Recall the $\infty$-category $\RRef^0(\R)$ defined as pullback (\ref{Ref}). 
Heuristically, an object is a refinement of $\R$ for which 
the $0$-skeleton of the domain is a finite set, 
and a morphism is a path of such refinements of $\R$ 
witnessing anticollision of strata and disappearences of strata. 
Observe that $\RRef^0(\R) \simeq \Ar^{{\sf ref}}\left(\Bun\right)_{|\R}$ is 
equivalent to the $\infty$-category of refinement arrows in $\Bun$ 
with the target fixed as the trivially stratified space $\R$. 
This is because every refinement of $\R$ has as its $0$-skeleton, 
a finite (possibly empty) set. The equivalence is given by the functor 
from $\Ar^{{\sf ref}}\left(\Bun\right)_{|\R}$ to $\RRef^0(\R)$ which forgets the target. 
Let $\RRef\left(\R^n\right)$ denote the $\infty$-category of refinement arrows in $\Bun$ 
with the target fixed as $\R^n$ equipped with the trivial stratification. 
Explicitly, $\RRef\left(\R^n\right)$ is the simplicial space 
whose value on $[p] \in \DDelta$ is the moduli space of

\begin{itemize}
\item
constructible bundles
\[
Y
\longrightarrow
\Delta^p
\]

\item
together with a refinement 
\[
Y
\xra{~\rm refinement~}
\R^n \times \Delta^p
\]
over $\Delta^p$.
\end{itemize}

We denote such a $[p]$-point of $\RRef\left(\R^n\right)$ simply as 
$\left(Y \xra{\sf ref} \R^n \times \Delta^p\right)$. 
Note that $\R^n \times \Delta^p$ is stratified as a product stratified space, 
where $\R^n$ is trivially stratified over the poset consisting of a singleton, 
and $\Delta^p$ is given the standard stratification (Ex.~\ref{standard}). 

For $n \geq 2$, define the functor 
\[ F_n: \RRef^0(\R) \ra \RRef\left(\R^n\right) 
\]
from refinements of $\R$ to refinements of $\R^n$ as follows: 
The value of a $[p]$-point $\left(X \xra{{\sf ref}} \R \times \Delta^p\right)$ 
under $F_n$ is the refinement of $\R^n \times \Delta^p$ defined as the pullback
\begin{equation} \label{F_n} 
\begin{tikzcd}
F_n\left(X\right) \ar{r} \ar{d}[swap]{\alpha} \pb & 
\R^n \times \Delta^p \ar{d}{\pr_{<2} \times \Id_{\Delta^p}} \\
X \ar{r}{{\sf ref}} & \R \times \Delta^p
\end{tikzcd} 
\end{equation}
of stratified spaces.
It is straightforward to check that the value $F_n\left(X\right)$ 
is a refinement of $\R^n \times \Delta^p$ by virtue of it being a pullback. 
Explicitly, the map $\left(F_n\left(X\right) \to \R^n\times \Delta^p\right)$ is 
$$\left(\left(\pr_{<2}\times \Id_{\Delta^p}\right)^{-1}\left(\sk_0^{{\fib}}\left(X\right)\right) 
\subset \R^ n \times \Delta^p\right) \xra{\sf ref} \R^n \times \Delta^p$$ 
which denotes the coarsest refinement of $\R^n \times \Delta^p$ 
for which the embedding from  
$$\left(\pr_{<2}\times \Id_{\Delta^p}\right)^{-1}\left(\sk_0^{{\fib}}\left(X\right)\right)$$ 
is proper and constructible. As a technicality (that we use later) 
define $F_1$ to be the identity on $\RRef^0(\R)$. 
Figure~\ref{fig1} is a sketch of the values of an object and a morphism under 
$F_2: \RRef^0(\R) \to \RRef\left(\R^2\right).$

\begin{figure}[ht] 
\centering
 \includegraphics[scale=0.6]{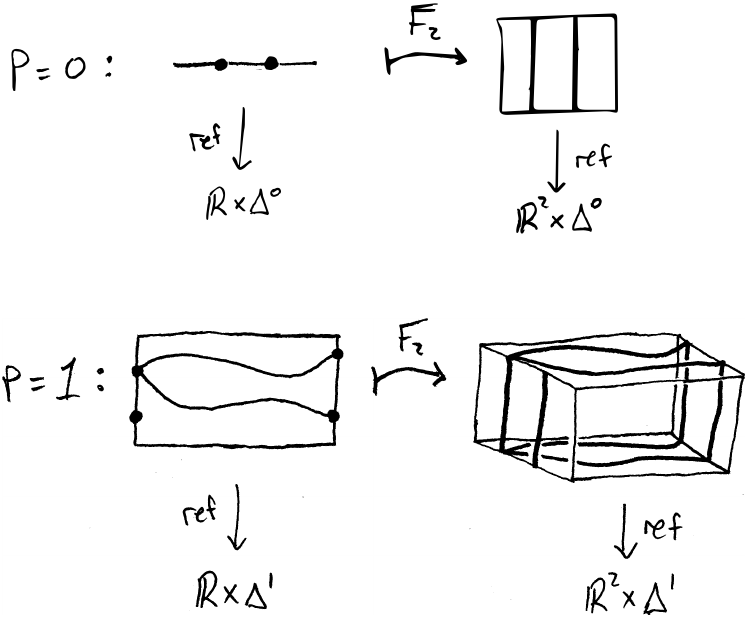}
 \caption{The values of a $[0]$-point and a $[1]$-point in $\RRef^0(\R)$ under $F_2$.}
 \label{fig1}
\end{figure}

Denote the pullback $\infty$-category 
\begin{equation} \label{D3} 
\begin{tikzcd}
\widetilde{\RRef}\left(\R^n\right) \ar{r} \ar{d} \pb & 
\Ar^{{\sf ref}}\left(\RRef\left(\R^n\right)\right) \ar{d}{\tx{target}} \\
\RRef^0(\R) \ar{r}{F_n} & \RRef\left(\R^n\right)
\end{tikzcd} 
\end{equation}
where $\Ar^{{\sf ref}}\left(\RRef\left(\R^n\right)\right)$ is the $\infty$-category 
of refinement arrows of $\RRef\left(\R^n\right)$.
In other words it is the full $\infty$-subcategory of the $\infty$-category of arrows 
of $\RRef\left(\R^n\right)$ consisting of the refinement arrows. 

Recall the open cylinder construction (\ref{e}). 
We aim to determine that $\widetilde{\RRef}\left(\R^n\right)$ is the following simplicial space: 
Its value on $[p] \in \Delta$ is the moduli space of
\begin{itemize}
\item
pairs of constructible bundles
\[
\left( \left(X
\longrightarrow
\Delta^p\right), \shs \left(Y \lra \Delta^p\right) \right)
\]
\item
together with a pair of refinements of stratified spaces
\[
\left( \left(X \xra{\tx{refinement}} \R \times \Delta^p\right), 
\shs \left(Y \xra{\tx{refinement}} F_n\left(X\right)\right) \right) \]
each of which is over $\Delta^p$.
\end{itemize}
To keep in mind that $Y$ is, in particular, a refinement of $\R^n \times \Delta^p$, 
we denote such a $[p]$-point of $\widetilde{\RRef}\left(\R^n\right)$ by 
\begin{equation} \label{tri} \begin{tikzcd} 
Y \ar{r}{\sf ref} \ar{dr}[swap]{\sf ref} & F_n\left(X\right) \ar{d}{\sf ref} \\
& \R^n \times \Delta^p 
\end{tikzcd} 
\end{equation}
Figure~\ref{fig2} is a sketch of an object ($p=0$) 
and a morphism ($p=1$) in $\widetilde{\RRef}\left(\R^2\right)$.

\begin{figure}[ht] 
\centering
 \includegraphics[scale=0.5]{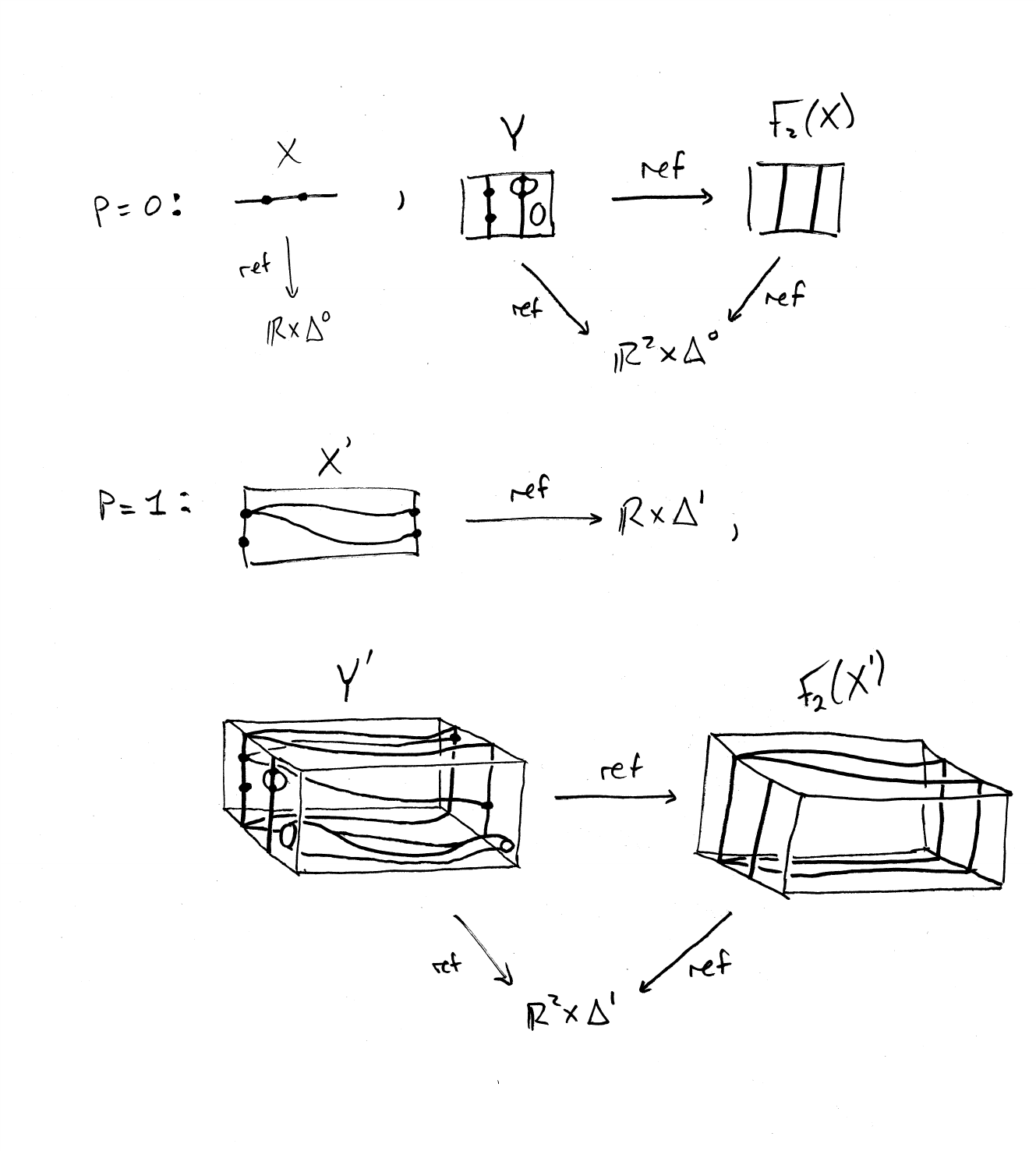}
 \caption{A $[0]$-point and a $[1]$-point in $\widetilde{\RRef}\left(\R^2\right)$.} 
 \label{fig2}
\end{figure}

For each such $[p]$-point of $\widetilde{\RRef}\left(\R^n\right)$ (\ref{tri}) 
there is a canonical stratified map of stratified spaces from $Y$ to $X$ 
defined to be the composite of the stratified maps 
$$Y \xra{{\sf ref}} F_n\left(X\right) \xra{\alpha} X$$ 
where $\alpha$ is the stratified map in (\ref{F_n}) 
which is given by virtue of $F_n\left(X\right)$ being defined as a pullback. 
The map of underlying topological spaces is projection 
onto the first Euclidean coordinate product with the identity on $\Delta^p$. 
We denote this stratified projection map $Y \xra{\pr_{<2}} X$. 

\begin{definition} 
For $n \geq 2$, the $\infty$-category $\widetilde{\RRef}\left(\un{\R}^n\right)$ 
is defined inductively on $n$:

$\widetilde{\RRef}\left(\un{\R}^2\right)$ is the full $\infty$-subcategory 
of $\widetilde{\RRef}\left(\R^2\right)$ consisting of those objects 
\[ \begin{tikzcd} 
Y \ar{r}{\sf ref} \ar{dr}[swap]{\sf ref} & F_2\left(X\right) \ar{d}{\sf ref} \\
& \R^2 
\end{tikzcd} 
\]
for which the $1$-skeleton of the open cylinder of 
$\left(Y \xra{{\sf ref}} F_2\left(X\right)\right)$ is a refinement morphism in $\Bun$. 

$\widetilde{\RRef}\left(\un{\R}^n\right)$ is the full $\infty$-subcategory of 
$\widetilde{\RRef}\left(\R^n\right)$ consisting of those  objects 
\[ \begin{tikzcd} 
Y \ar{r}{\sf ref} \ar{dr}[swap]{\sf ref} & F_n\left(X\right) \ar{d}{\sf ref} \\
& \R^n 
\end{tikzcd} 
\] such that 
\begin{itemize}
\item[i.]  the $\left(n-1\right)$-skeleton of the open cylinder of 
$\left(Y \xra{{\sf ref}} F_n\left(X\right)\right)$ 
is a refinement morphism in $\Bun$. 

\item[ii.]  the fiber of the stratified projection map $Y \xra{\pr_{<2}} X$ 
over each point in the $0$-skeleton of $X$ is an object in 
$\widetilde{\RRef}\left(\un{\R}^{n-1}\right)$. 
\end{itemize}
\end{definition}

Explicitly, $\widetilde{\RRef}\left(\un{\R}^n\right)$ is the simplicial space 
whose value on $[p] \in \DDelta$ is the moduli space of
\begin{itemize}
\item
pairs of constructible bundles
\[
\left(\left(X \lra \Delta^p\right), \shs \left(Y
\longrightarrow
\Delta^p\right)\right)
\]
\item
together with a pair of refinements among stratified spaces
\[
\left( \left(X \xra{\tx{refinement}} \R \times \Delta^p\right), 
\shs \left(Y \xra{\tx{refinement}}  F_n\left(X\right)\right) \right)
\]
each of which is over $\Delta^p$
\end{itemize}
satsifying the conditions
\begin{itemize} 
\item[i.] the fiberwise $\left(n-1\right)$-skeleton of the open cylinder 
of $\left(Y \xra{{\sf ref}}  F_n\left(X\right) \right)$
is a refinement morphism in $\Bun$. 

\item[ii.] the fiber of the stratified projection map $\left(Y \xra{\pr_{<2}} X\right)$  
over each point in the $0$-skeleton of $X$ is 
an object in $\widetilde{\RRef}\left(\un{\R}^{n-1}\right)$.
\end{itemize} 

We will denote such a $[p]$-point in $\widetilde{\RRef}\left(\un{\R}^n\right)$ as 
\[ \begin{tikzcd} 
Y \ar{r}{\sf ref} \ar{dr}[swap]{\sf ref} & F_n\left(X\right) \ar{d}{\sf ref} \\
& \R^n \times \Delta^p 
\end{tikzcd} 
\] 
Figure~\ref{fig3} is a sketch of an object ($p=0$) 
and a morphism ($p=1$) in $\widetilde{\RRef}\left(\un{\R}^2\right)$.
\begin{figure}[h]
\centering
 \includegraphics[scale=0.5]{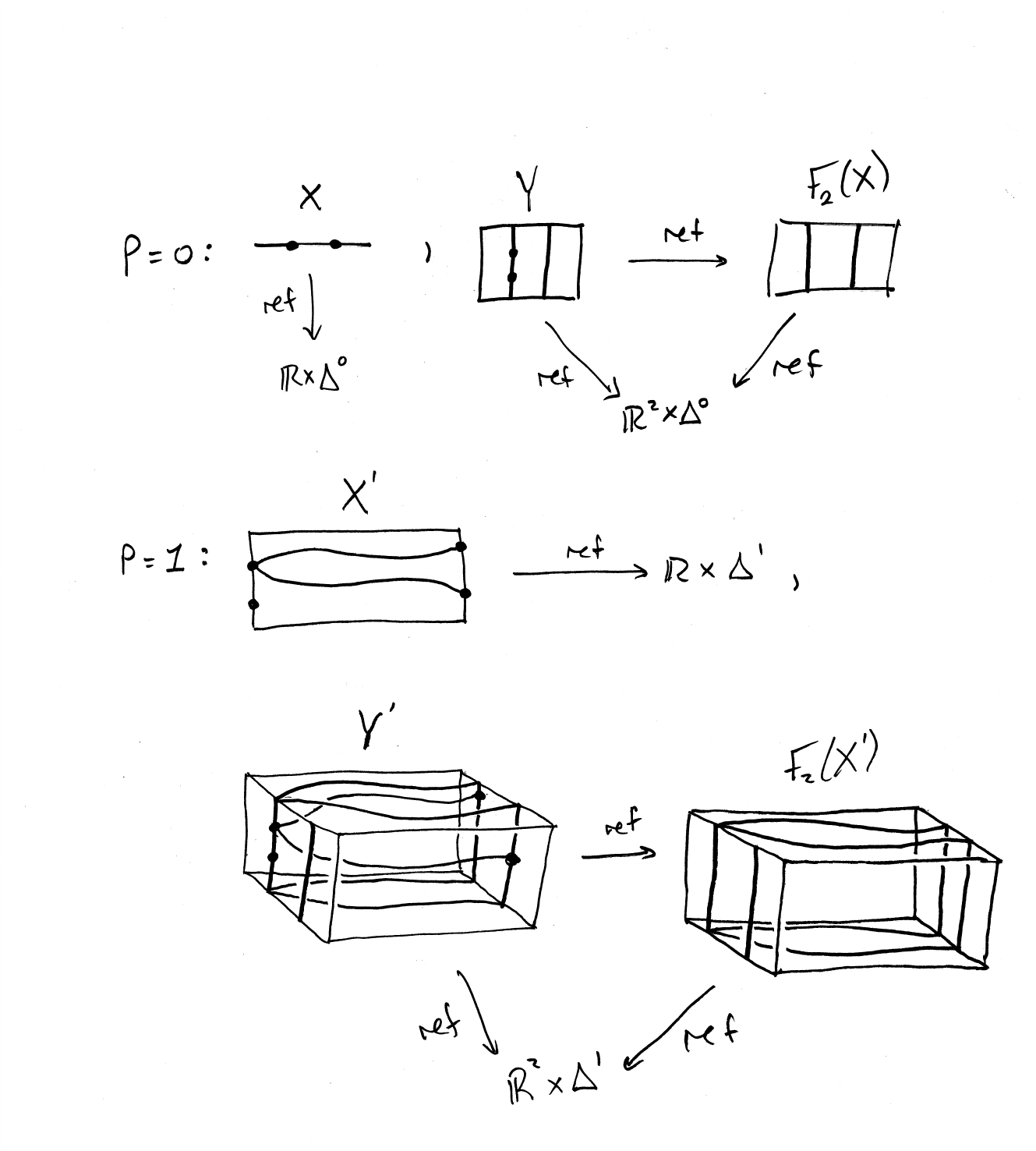}
 \caption{A $[0]$-point and a $[1]$-point in $\widetilde{\RRef}\left(\un{\R}^2\right)$.} 
 \label{fig3}
\end{figure}

\begin{lemma} \label{O1} 
For each integer $n \geq 1$ and $ 1 \leq k \leq n-1$, there is a canonical functor
\[ \pr_{k}: \widetilde{\RRef}\left(\un{\R}^n\right) \lra \widetilde{\RRef}\left(\un{\R}^{k}\right) \]
induced by projection from $\R^n$ onto the first $k$-coordinates. 
The functor is given by, for each $[p] \in \DDelta$, assigning to the $[p]$-point 
\[ \begin{tikzcd} 
Y \ar{r}{\sf ref} \ar{dr}[swap]{\sf ref} & F_n\left(X\right) \ar{d}{\sf ref} \\
& \R^n \times \Delta^p  
\end{tikzcd} 
\] 
the refinement
\[ \begin{tikzcd} 
\left( \pr_{<k+1}\times \Id_{\Delta^p}\left(\sk^{{\fib}}_{n-k}\left(Y\right)\right) 
\subset \cdots \subset \pr_{<k+1}\times \Id_{\Delta^p}\left(\sk^{{\fib}}_{n-1}\left(Y\right)\right) 
\subset \R^{k} \times \Delta^p \right) \ar{r}{\sf ref} \ar{dr}[swap]{\sf ref} & F_k\left(X\right)   \ar{d}{\sf ref} \\
& \R^k \times \Delta^p 
\end{tikzcd}
 \]
the value of which is the coarsest refinement of $\R^k \times \Delta^p$ 
for which the embeddings from \\ $\pr_{<k+1}\left(\sk^{\fib}_i\left(Y\right)\right)$ 
into $\R^k \times \Delta^p$ for each $n-k \leq i \leq n-1$ are proper and constructible. 
We denote such a value by 
\[ \begin{tikzcd} 
Y_k \ar{r}{\sf ref} \ar{dr}[swap]{\sf ref} & F_k\left(X\right) \ar{d}{\sf ref} \\
& \R^k \times \Delta^p 
\end{tikzcd} 
\] 
\end{lemma}

Before we proceed with the proof, we provide a sketch of the values 
of an object in $\widetilde{\RRef}\left(\un{\R}^3\right)$ under $\pr_2$ and $\pr_1$ in Figure~\ref{fig4}.
\begin{figure}[h]
\centering
\includegraphics[scale=0.6]{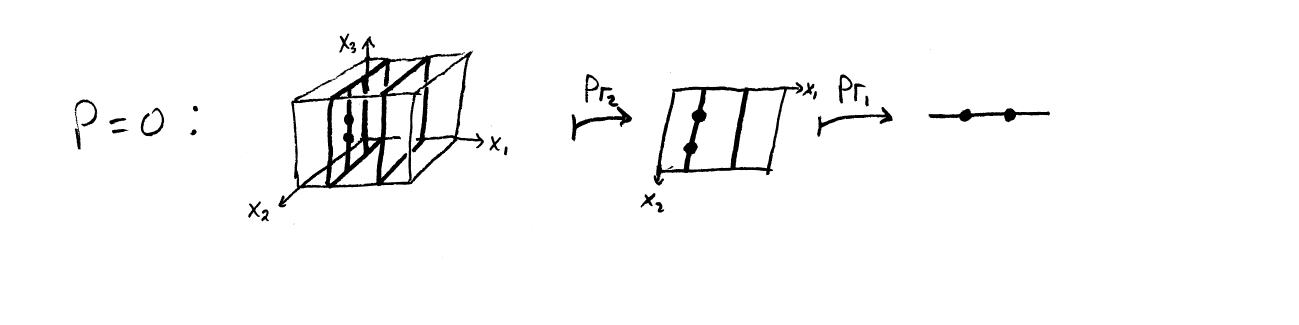}  
 \caption{The values of a $[0]$-point of $\widetilde{\RRef}\left(\un{\R}^3\right)$ under $\pr_2$ and $\pr_1$.}
 \label{fig4}
\end{figure}

 \begin{proof}[Proof of Lemma~\ref{O1}]  
 We proceed by induction on $k$. 
 For the base case, let $k=1$. 
 Recall the definition of $\widetilde{\RRef}\left(\R^n\right)$ as the pullback (\ref{D3}). 
 Observe that the target $\RRef^0(\R)$ of the leftmost vertical functor 
 of (\ref{D3}) is, by definition, $\widetilde{\RRef}\left(\un{\R}\right)$. 
 We claim that $\pr_1$ is precisely this functor upon 
 restricting the domain to $\widetilde{\RRef}\left(\un{\R}^n\right)$. 
 To verify this claim, first note that the functor in (\ref{D3}), 
 by virtue of $\widetilde{\RRef}\left(\R^n\right)$ being a pullback, 
 is given by the assignment, for each $[p] \in \DDelta$,
\[ \begin{tikzcd} 
Y \ar{r}{\sf ref} \ar{dr}[swap]{\sf ref} & 
F_n\left(X\right) \ar{d}{\sf ref}  \ar[mapsto, shorten >= 0.5em, shorten <= 0.5em]{r} & 
X \ar{r}{\sf ref} & \R \times \Delta^p  \\
& \R^n \times \Delta^p  &&
\end{tikzcd} 
\]  
Therefore, we must show that the value of such a $[p]$-point under $\pr_1$, 
\[ \begin{tikzcd} 
Y_1 \ar{r}{\sf ref} \ar{dr}[swap]{\sf ref} & F_1\left(X\right) \ar{d}{\sf ref} \\
& \R \times \Delta^p 
\end{tikzcd} 
\] 
 is $\left(X \xra{\sf ref} \R \times \Delta^p\right)$. 
 Recall that (as a technicality) $F_1$ was previously 
 defined to be the identity on $\RRef^0(\R)$. 
 Thus, we need to verify that $Y_1$ and $X$ 
 are equivalent refinements of $\R \times \Delta^p$. 
 By virtue of being a $[p]$-point of $\widetilde{\RRef}\left(\un{\R}^n\right)$, 
 the fiberwise $\left(n-1\right)$-skeleton of $Y$ refines 
 the fiberwise $\left(n-1\right)$-skeleton of $F_n\left(X\right)$. 
 This means that, in particular, the underlying topological spaces 
 of the $\left(n-1\right)$-skeletons of $Y$ and $F_n\left(X\right)$ are homeomorphic. 
 Thus, their projections onto the first Euclidean coordinate product with the identity on $\Delta^p$ 
\begin{equation} \label{E1} 
\pr_{<2}\times \Id_{\Delta^p}\left(\sk_{n-1}^{\fib}\left(Y\right)\right) 
\cong \pr_{<2}\left(\sk_{n-1}^{\fib}\left(F_n\left(X\right)\right)\right) 
\end{equation}
are homeomorphic subspaces of $\R \times \Delta^p$ over $\Delta^p$. 
This simply means that the fibers of each over the same point in $\Delta^p$ have the same cardinality. 

Previously we observed that explicitly $F_n\left(X\right)$ is the coarsest refinement 
$$\left(\pr_{<2}\times \Id_{\Delta^p}\right)^{-1}\left(\sk_0^{{\fib}}\left(X\right)\right) 
\subset \R^ n \times \Delta^p$$
of $\R^n \times \Delta^p$
for which the embedding from 
$\left(\pr_{<2}\times \Id_{\Delta^p}\right)^{-1}\left(\sk_0^{{\fib}}\left(X\right)\right)$ 
is proper and constructible. 
This means that the fiberwise $\left(n-1\right)$-skeleton of $F_n\left(X\right)$ is 
$\left(\pr_{<2}\times \Id_{\Delta^p}\right)^{-1}\left(\sk_0^{{\fib}}\left(X\right)\right)$. 
Thus, the projection of the $\left(n-1\right)$-skeleton onto the first Euclidean coordinate 
product with the identity on $\Delta^p$ is simply the fiberwise $0$-skeleton of $X$, i.e.,
$$\pr_{<2}\times \Id_{\Delta^p}\left(\sk_{n-1}^{\fib}\left(F_n\left(X\right)\right)\right) 
= \sk_0^{\fib}\left(X\right).$$
 Therefore, through equivalence (\ref{E1}), 
 the fiberwise $0$-skeleton of $X$ is homeomorphic to  
 $$\pr_{<2}\times \Id_{\Delta^p}\left(\sk_{n-1}^{\fib}\left(Y\right)\right).$$
 Upon making the (somewhat trivial) observation that 
 $\left(X \xra{\sf ref} \R \times \Delta^p\right)$ has the explicit description 
 as the coarsest refinement $\left(\sk_0^{\fib}\left(X\right) \subset \R \times \Delta^p\right)$ 
 of $\R \times \Delta^p$ for which the embedding f
 rom the fiberwise $0$-skeleton of $X$ is proper and constructible, 
 we conclude that 
 $$Y_1:= \left(\pr_{<2}\left(\sk_{n-1}^{\fib}\left(Y\right)\right) 
 \subset \R \times \Delta^p\right)$$
 is equivalent to $X$ as a refinement of $\R \times \Delta^p$, 
 which proves the base case.

For the inductive step we need to check that the value
\begin{equation} \label{P1} \begin{tikzcd} 
Y_k \ar{r}{\sf ref} \ar{dr}[swap]{\sf ref} & F_k\left(X\right) \ar{d}{\sf ref} \\
& \R^k \times \Delta^p 
\end{tikzcd} 
\end{equation} 
is in fact a $[p]$-value in $\widetilde{\RRef}\left(\un{\R}^k\right)$. 
Thus, we must verify three things: 
\begin{enumerate}
\item[(a)] $Y_k$ refines $F_k\left(X\right)$,
\item[(b)] the $\left(k-1\right)$-skeleton of the open cylinder of 
$\left(Y_k \xra{{\sf ref}} F_k\left(X\right)\right)$  is a refinement, and
\item[(c)] the fiber of the stratified projection map $Y_k \xra{\pr_{<2}} X$ 
over each point in the $0$-skeleton of $X$ is an object in
 $\widetilde{\RRef}\left(\un{\R}^{k-1}\right)$.
\end{enumerate} 

(a) Recall that $F_n\left(X\right)$ is the coarsest refinement 
$$\left(\pr_{<2}\times \Id_{\Delta^p}\right)^{-1}\left(\sk_0^{{\fib}}\left(X\right)\right) 
\subset \R^ n \times \Delta^p$$ 
of $\R^n \times \Delta^p$ 
for which the embedding from 
$\left(\pr_{<2}\times \Id_{\Delta^p}\right)^{-1}\left(\sk_0^{{\fib}}\left(X\right)\right)$ 
is proper and constructible. 
Projection of $F_n\left(X\right)$ onto its first $k$ Euclidean coordinates 
(product with the identity on $\Delta^p$) yields an explicit description  
\begin{equation} \label{E9} 
F_k\left(X\right) = \left(\pr_{<k+1}\times 
\Id_{\Delta^p}\left(\sk_{n-1}^{\fib}\left(F_n\left(X\right)\right)\right) 
\subset \R^k \times \Delta^p\right) 
\end{equation}
which denotes the coarsest refinement of $\R^k \times \Delta^p$ 
for which the embedding from  
$$\pr_{<k+1}\times \Id_{\Delta^p}\left(\sk_{n-1}^{\fib}\left(F_n\left(X\right)\right)\right)$$ 
is proper and constructible.
 By definition of a $[p]$-point of $\widetilde{\RRef}\left(\un{\R}^n\right)$,
\[ \begin{tikzcd} 
Y \ar{r}{\sf ref} \ar{dr}[swap]{\sf ref} & F_n\left(X\right) \ar{d}{\sf ref} \\
& \R^n \times \Delta^p  
\end{tikzcd} 
\] 
satisfies that the fiberwise $\left(n-1\right)$-skeleton of $Y$ is a refinement 
of the fiberwise $\left(n-1\right)$-skeleton of $F_n\left(X\right)$. 
In particular Then the underlying topological spaces of the 
$\left(n-1\right)$-skeletons are homeomorphic. 
Thus, so are their projections onto the first $k$ Euclidean coordinates 
(product with the identity on $\Delta^p$), i.e., 
\begin{equation} \label{E5} 
\pr_{<k+1}\times \Id_{\Delta^p}\left(\sk_{n-1}^{\fib}\left(Y\right)\right) 
\cong \pr_{<k+1}\times \Id_{\Delta^p}\left(\sk_{n-1}^{\fib}\left(F_n\left(X\right)\right)\right).
\end{equation}
By definition, $Y_k$ refines the coarsest refinement 
$$\pr_{<k+1}\times \Id_{\Delta^p}\left(\sk_{n-1}\left(Y\right)\right) \subset \R^k \times \Delta^p$$
of $\R^k \times \Delta^p$ for which the embedding from 
$\pr_{<k+1}\times \Id_{\Delta^p}\left(\sk_{n-1}\left(Y\right)\right)$ is proper and constructible. 
Thus, through the equivalences (\ref{E9}) and (\ref{E5}) above, 
we conclude that $Y_k$ refines $F_k\left(X\right)$. 

(b) By definition, the fiberwise $\left(k-1\right)$-skeleton of $Y_k$ is a refinement of 
$\pr_{<k+1}\times \Id_{\Delta^p}\left(\sk_{n-1}^{\fib}\left(Y\right)\right)$. 
Through equivalences (\ref{E9}) and (\ref{E5}), 
the fiberwise $\left(k-1\right)$-skeleton of $Y_k$ refines the fiberwise 
$\left(k-1\right)$-skeleton of $F_k\left(X\right)$. 
That is to say, the refinement $\left(Y_k \xra{\sf ref} F_k\left(X\right)\right)$ of stratified spaces 
restricts to a refinement 
$\left(\sk_{k-1}\left(Y_k\right) \xra{\sf ref} \sk_{k-1}\left(F_k\left(X\right)\right)\right)$ 
of stratified spaces between the $\left(k-1\right)$-skeletons. 
Thus, the $\left(k-1\right)$-skeleton of the open cylinder of 
$\left(Y_k \xra{\sf ref} F_k\left(X\right)\right)$ is a refinement morphism in $\Bun$. 

(c) The fiber of $Y_k \xra{\pr_{<2}} X$ over a point in the $0$-skeleton of $X$ can, 
by definition of $Y_k$,  be described in terms of projection of the fiber 
of $Y \xra{\pr_{<2}} X$ over the same point as follows: 
Let $x$ be a point in the $0$-skeleton of $X$ and let $Y_x$ denote the fiber over $x$ in $Y$. 
The fiber over $x$ in $Y_k$ is 
$$\left(Y_k\right)_x := \left(\pr_{<k}\times \Id_{\Delta^p}\left(\sk^{\fib}_{n-k}\left(Y_x\right)\right) 
\subset \cdots \subset \pr_{<k}\times \Id_{\Delta^p}\left(\sk^{\fib}_{n-2}\left(Y_x\right)\right) 
\subset \R^{k-1}\right) \xra{{\sf ref}} \R^{k-1}$$
which denotes the coarsest refinement of $\R^{k-1}$ 
for which the embedding from $\pr_{<k}\times \Id_{\Delta^p}\left(\sk_i\left(Y_x\right)\right)$ into $\R^{k-1}$, 
for each $n-k \leq i \leq n-2$, is proper and constructible. 
By definition, $\left(Y_k\right)_x$ is the value of $Y_x$ under the projection functor  
$\widetilde{\RRef}\left(\un{\R}^{n-1}\right) \xra{\pr_{k-1}} \widetilde{\RRef}\left(\un{\R}^{k-1}\right)$, 
which exists by the inductive step, and therefore identifies 
$\left(Y_k\right)_x$ as an object in $\widetilde{\RRef}\left(\un{\R}^{k-1}\right)$, as desired. 
\end{proof}

\begin{observation} 
There is a natural  functor 
\[ \widetilde{\RRef}\left(\un{\R}^n\right) \lra \Fun\left(\{1 < \cdots < n\}, \Fin^\op\right)
\]
 the value of which on an object 
\[ \begin{tikzcd} 
Y \ar{r}{\sf ref} \ar{dr}[swap]{\sf ref} & F_n\left(X\right) \ar{d}{\sf ref} \\
& \R^n   
\end{tikzcd} 
\]  
is the functor from $\{1 < \cdots < n \}$ to $\Fin^\op$ that selects out the sequence 
\[ \sk_0^{\fib}\left(Y\right) \xra{\pr_{<n}} \sk_0^{\fib}\left(Y_{n-1}\right) 
\ra \cdots \xra{\pr_{<2}} \sk_0^{\fib}\left(X\right) \]
of finite sets. 

The value on a morphism 
\[ \begin{tikzcd} 
Y \ar{r}{\sf ref} \ar{dr}[swap]{\sf ref} & F_n\left(X\right) \ar{d}{\sf ref} \\
& \R^n \times \Delta^1  
\end{tikzcd} 
\] 
 is given by selecting out the diagram
\[ \begin{tikzcd}
\sk_0^{\fib}\left(Y_{|1}\right) \ar{r} \ar{d}[swap]{\pr_{<n}} & 
\sk_0^{\fib}\left(Y_{|0}\right) \ar{d}{\pr_{<n}} \\
\sk_0^{\fib}\left(\left(Y_{n-1}\right)_{|1}\right) \ar{r} \ar{d} & 
\sk_0^{\fib}\left(\left(Y_{n-1}\right)_{|0}\right) \ar{d} \\
\vdots \ar{d}[swap]{\pr_{<2}} & \vdots \ar{d}{\pr_{<2}} \\
\sk_0^{\fib}\left(\left(X\right)_{|1}\right) \ar{r}  & 
\sk_0^{\fib}\left(\left(X\right)_{|0}\right)  
\end{tikzcd} \]
among finite sets, where for each $1 \leq k \leq n$, 
the horizontal arrow is from the $\left(0\right)$-skeleton of the fiber of $Y_k \to \Delta^1$ 
over $\{1\} \in \Delta^1$ to the $\left(0\right)$-skeleton of the fiber of 
$Y_k \to \Delta^p$ over $\{0\} \in \Delta^1$, 
and is a canonical map of sets implemented by taking 
connected components of the fiberwise $\left(0\right)$-skeleton of $X$. 
\end{observation}

\begin{lemma}\label{L1}
There is a canonical equivalence of simplicial spaces 
\[
\widetilde{\RRef}\left(\un{\R}^n\right)
{~\simeq~}
\Exit\bl(\Ranu(\un{\R}^n)\br)
\]
over  $\Fun\left(\{1 < \cdots < n\}, \Fin^\op\right)$.
\end{lemma}

\begin{proof}
A rightward morphism is implemented for each $[p]\in \mathbf{\Delta}$ 
by the assignment
\[
\begin{tikzcd}
  &  & \sk_0\left(Y\right) \ar[hookrightarrow]{r} \ar{d}[swap]{\pr_{<n}\times \Id_{\Delta^p}} & 
  Y \ar{r}  & \R^n \times \Delta^p \ar{d}{\pr_{<n}\times \Id_{\Delta^p} }\\
  Y \ar{r}{{\sf ref}}\ar{dr}[swap]{\sf ref}  & 
  F_n\left(X\right) \ar{d}{\sf ref}   \ar[mapsto, shorten >= 0.5em, shorten <= 0.5em]{r} & 
  \sk_0\left(Y_{n-1}\right) \ar[hookrightarrow]{r} \ar{d} & 
  Y_{n-1} \ar{r} & \R^{n-1} \times \Delta^p \ar{d} \\ 
  &\R^n \times \Delta^p& \vdots \ar{d}[swap]{\pr_{<2}\times \Id_{\Delta^p}} &  
  & \vdots \ar{d}{\pr_{<2}\times \Id_{\Delta^p}} \\
 & & \sk_0\left(X\right) \ar[hookrightarrow]{r} & X \ar{r} & \R \times \Delta^p 
\end{tikzcd} 
\]
whose value is the sequence of embeddings over $\Delta^p$, 
each of which is from the fiberwise $\left(0\right)$-skeleton of the value $Y_k$ of $Y$ 
under the functor $\pr_k$ (Lem. \ref{O1}), which maps to $\Delta^p$ 
as a finite proper constructible bundle. 

A leftward morphism is given by assigning to each $[p]$-point 
\[
\begin{tikzcd}
{\sf cylr}\left(\sigma_n\right) \ar[hookrightarrow]{r} \ar{d}  
& \R^n \times \Delta^p \ar{d}{\tx{pr}_{< n}\times \Id_{\Delta^p} }  \\
{\sf cylr}\left(\sigma_{n-1}\right) \ar[hookrightarrow]{r} \ar{d}  & \R^{n-1} \times \Delta^p \ar{d} \\
\vdots \ar{d} &  \vdots \ar{d}{\pr_{<2}\times \Id_{\Delta^p}} \\
{\sf cylr}\left(\sigma_1\right) \ar{r}   & \R \times \Delta^p 
\end{tikzcd}
\]
the refinement
\[
 \left( \cylr\left(\sigma_n\right) \subset 
 \left(\pr_{<n}\times \Id_{\Delta^p}\right)^{-1}\left(\cylr\left(\sigma_{n-1}\right)\right) 
 \subset \cdots \subset \left(\pr_{<2}\times 
 \Id_{\Delta^p}\right)^{-1}\left(\cylr\left(\sigma_1\right)\right) 
 \subset \R^n \times \Delta^p \right) \to \R^n \times \Delta^p 
\]
whose value is the coarsest refinement of $\R^n \times \Delta^p$ 
for which the embeddings from $\cylr\left(\sigma_n\right)$ 
and each prevalue 
$\left(\pr_{<i}\times \Id_{\Delta^p}\right)^{-1}\left(\cylr\left(\sigma_{i-1}\right)\right)$ for $2 \leq i \leq n$ 
are proper and constructible.  
(Such a refinement exists because the value of each embedding is, 
by definition, a properly embedded stratified subspace.)

It is straightforward to verify that these two assignments are mutually inverse to one another, 
and further, that they are over  $\Fun\left(\{1 < \cdots < n\}, \Fin^\op\right)$.  
Furthermore, it is evident that the structure maps are equivalent. 	
\end{proof}

\begin{proof}[Proof of Proposition~\ref{poo}]
The simplicial space $\widetilde{\RRef}\left(\R^n\right)$ is a pullback of complete Segal spaces 
and is also, therefore, a complete Segal space, 
since the full $\infty$-subcategory of simplicial spaces 
consisting of the complete Segal spaces is closed under formation of pullbacks. 
By virtue of $\widetilde{\RRef}\left(\un{\R}^n\right)$ being a full subsimplicial space of 
$\widetilde{\RRef}\left(\R^n\right)$, it too satisfies the Segal and completeness conditions. 
Through the equivalence of Lemma~\ref{L1}, 
then so does the simplicial space $\Exit\bl(\Ranu(\un{\R}^n)\br)$.
\end{proof}

\section{Part 1 of the proof of the main result} \label{ah}
The goal of this section is to prove that the $\infty$-category 
$\Exit\left(\Ranu\left(\protect\un{\R}^n\right)\right)$
is equivalent to the category $\mathbf{\Theta}_n^\Act$.
This result is the first step in proving Theorem~\ref{loc}.
We start by constructing the functor between them.

\begin{lemma} \label{functor}  
For $n \geq 1$, there is a functor of $\infty$-categories
$$ \ds \G_n: \Exit\bl(\Ranu(\un{\R}^n)\br) \ra \mathbf{\Theta}_n^{\tx{act}}$$ 
over  $\Fun\left(\{1< \cdots < n\}, \Fin^\op\right)$.
 \end{lemma}
 
 \begin{note}
 A functor from an $\infty$-category to the nerve of a (small) category 
is completely determined by its assignment on objects, morphisms 
and the requirement that composition is respected. 
This is due to the fact that the nerve of a small category is completely 
determined by its values on $[i]$ for $0\leq i \leq 2$. 
(See the proof of Lemma 3.5 in \cite{GJ} for more details.) 
Since $\mathbf{\Theta}_n^{\Act}$ is an ordinary category, 
we need only define $\ds \G_n$ on objects and morphisms 
and check that composition is respected. 
\end{note}

\begin{proof} 
We proceed by induction on $n$.
For the base case $n=1$, we seek to define a functor $\G_1$
\[ \begin{tikzcd}
 \Exit\left(\Ran^{\tx{u}}\left(\un{\R}\right)\right) \ar[dashrightarrow]{r}{\G_1} 
 \ar{dr}[swap]{\phi_1} & \mathbf{\Delta}^{\Act} \ar{d}{\gamma_1} \\
& \Fin^\op 
\end{tikzcd} \]
contravariantly over finite sets. 

Let $S \overset{e}\hookrightarrow \R$ be an object in 
$\Exit\left(\Ranu\left(\un{\R}\right)\right)$. 
The value of $\G_1$ on $e$ is the linearly ordered set of connected components 
of the complement of $e\left(S\right)$ in $\R$  
$$\G_1: e \mapsto \pi_o\left(\R-e\left(S\right)\right)$$ 
the linear order of which is inherited from the linear order on $\R$. 

Let $\ds \cylr\left(T \xra{\sigma} S\right) \overset{E}\hookrightarrow \R \times \Delta^1$ 
be a morphism from $\ds S \xhra{e} \R$ to 
$\ds T \xhra{d} \R$ in $\Exit\left(\Ranu\left(\un{\R}\right)\right)$.  
Let $\ds C_{E}$ denote the compliment of the image of the embedding of $E$, 
$$\ds C_E := \left(\R \times \Delta^1\right) - E\left(\cylr\left(\sigma\right)\right).$$                                                                                                                                                                                                                                                                                                                                                                                                                                                                                                                                                                                                                                                                                                                                                                                                                                                                                                                                                                                                                                                                                                                                                                                                                                                                                                                                                                                                                                          Before we name the value of $\G_1$ on $E$, we need three observations:
\begin{itemize}
\item[1.] Consider the inclusion $ \ds \iota_1:\left(\R-d\left(T\right)\right) \hookrightarrow  C_E$ 
given by $\ds x \mapsto \left(x,\{1\}\right)$. 
Taking connected components induces an inclusion of sets 
$$\ds \pi_o\left(\iota_1\right): \pi_o\left(\R-d\left(T\right)\right) \hookrightarrow \pi_o\left(C_E\right).$$ 
It is easy to see that $\pi_o\left(\iota_1\right)$ is, in particular, a bijection. 
We denote its inverse $\ds \pi_o\left(\iota_1\right)^{-1}$.

\item[2.] Taking connected components of the inclusion 
$\ds \iota_0: \left(\R-e\left(S\right)\right) \hookrightarrow C_E$ given by 
$\ds x \mapsto \left(x,\{0\}\right)$ induces a map between sets 
$$\ds \pi_o\left(\iota_0\right): \pi_o\left(\R-e\left(S\right)\right) \hookrightarrow \pi_o\left(C_E\right).$$ 
Note that $\pi_o\left(\iota_0\right)$ is not necessarily injective nor surjective 
because $\sigma$ is not necessarily injective nor surjective. 

\item[3.] $\pi_o\left(\iota_1\right)$ determines a linear order on $\ds \pi_o\left(C_{E}\right)$ 
and thus, $\ds \pi_o\left(C_{E}\right)$ is an object in $\mathbf{\Delta}$.
\end{itemize}
The value of $\ds \G_1$ on $\ds\cylr\left(T \xra{\sigma} S\right) 
\overset{E}\hookrightarrow \R \times \Delta^1$ 
is defined to be the composite 
\begin{equation} \label{mors} \ds
\begin{tikzcd}
\pi_o\left(\R-e\left(S\right)\right) \arrow[swap]{dd}{\G_1\left(E\right)}  
\arrow{r}{\pi_o\left(\iota_0\right)} 
&  \pi_o\left(C_E\right) \arrow{ddl}{\pi_o\left(\iota_1\right)^{-1}}
\\ & 
\\ \pi_o\left(\R-d\left(T\right)\right)  & 
\end{tikzcd} 
\end{equation} 
in $\ds \mathbf{\Delta}^\Act$.

It must be checked that $\ds \G_1\left(E\right)$ is linear and active. 
We do this by verifying that each morphism in the composite is linear and active. 
$\ds \pi_o\left(\iota_1\right)^{-1}$ is a linear map 
because it defines the linear order of $\pi_o\left(C_E\right)$. 
Bijectivity of $\pi_o\left(\iota_1\right)^{-1}$ implies that it is active. 
Similarly, it is easy to see that $\pi_o\left(\iota_0\right)$ is order-preserving 
and sends unbounded components to unbounded components thereby being active.

Next we show that $\G_1$ respects composition by showing 
that the diagram of $\infty$-categories
\begin{equation} \label{finrelation}
\ds \begin{tikzcd} \Exit\left(\Ranu\left(\un{\R}\right)\right) 
\ar[r, dashrightarrow, "\G_1"] \ar[swap]{dr}{\phi_1} 
&  \mathbf{\Delta}^{\Act} \ar{d}{\gamma_1} \\ 
& \tx{Fin}^\op 
\end{tikzcd} 
\end{equation} 
commutes on the level of objects and morphisms.
Indeed, if (\ref{finrelation}) commutes, then faithfulness of $\gamma_1$ 
together with functorality of $\phi_1$ guarantee that $\G_1$ respects composition. 
More precisely, let 
\begin{equation} \label{triangle} 
\begin{tikzcd} 
a \ar{r}{f} \ar{rd}[swap]{h} & b \ar{d}{g} \\
 & c 
\end{tikzcd} 
\end{equation}
denote a commutative triangle in $\Exit\left(\Ranu\left(\un{\R}\right)\right)$. 
We show that if (\ref{finrelation}) commutes, then $\G_1$ carries the composite 
$h$ in (\ref{triangle}) to $\G_1\left(g\right) \circ \G_1\left(f\right)$. 
First, note that functorality of $\phi_1$ 
implies that $\phi_1\left(h\right) = \phi_1\left(g\right) \circ \phi_1\left(f\right)$. 
Commutativity of (\ref{finrelation}) guarantees that the morphisms 
$$\ds \gamma_1\left(\G_1\left(h\right)\right), 
\hspace{.1cm} \gamma_1\left(\G_1\left(f\right)\right) \hspace{.1cm} 
\tx{and} \hspace{.1cm} \gamma_1\left(\G_1\left(g\right)\right)$$ 
are equivalent (up to composition with canonical isomorphisms) to 
$$\phi_1\left(h\right), \hspace{.1cm} \phi_1\left(f\right) \hspace{.1cm} 
\tx{and}\hspace{.1cm} \phi_1\left(g\right)$$ 
respectively, in $\Fin$. 
Thus, 
$$\gamma_1\left(\G_1\left(h\right)\right)=\gamma_1\left(\G_1\left(g\right)\right) \circ 
\gamma_1\left(\G_1\left(f\right)\right) = \gamma_1\left( \G_1\left(g\right) \circ \G_1\left(f\right)\right).$$ 
Then faithfulness of $\gamma_1$ guarantees that 
$\G_1\left(h\right)=\G_1\left(g\right) \circ \G_1\left(f\right)$, as desired.

Now we verify commutativity of (\ref{finrelation}) on objects and morphisms. 
Let $\ds S \overset{e}\hookrightarrow \R$ be an object in 
$\ds \Exit\left(\Ranu\left(\un{\R}\right)\right)$. 
There is a canonical bijection of sets
\begin{equation} \label{gluing1} 
\ds  \gamma_1\left(\G_1\left(e\right)\right) \overset{\cong}\lra \phi_1\left(e\right):= S  
\end{equation} 
 in $\tx{Fin}$ given by 
 $$\ds \left(\pi_o\left(\R-e\left(S\right)\right) \overset{\alpha}\lra [1]\right) 
 \mapsto \tx{inf}\left\{x\in \coprod_{U \in \alpha^{-1}\left(\{1\}\right)} U \right\}$$ 
 verifying commutativity of (\ref{finrelation}) on objects.   
 
Let $\ds \cylr\left(T \overset{\sigma}\to S\right) \overset{E}\hookrightarrow \R \times \Delta^1$ 
be a morphism from $\ds S \xhra{e} \R$ to 
$\ds T \xhra{d} \R$ in $\Exit\left(\Ranu\left(\un{\R}\right)\right)$. 
We consider the canonical bijections of the source and target of 
$\ds \gamma_1\left(\G_1\left(E\right)\right)$, 
and the corresponding composite, $\alpha$, in $\Fin$ from $T$ to $S$:
\begin{equation} 
\begin{tikzcd} 
\gamma_1\left(\G_1\left(d\right)\right)  \ar{rr}{\gamma_1\left(\G_1\left(E\right)\right)} 
& & \gamma_1\left(\G_1\left(e\right)\right) \ar{d}[swap]{\cong}\\
T \ar{u}{\cong} \ar{rr}{\alpha} & &  S.
 \end{tikzcd} 
 \end{equation}
 By definition, the value of $\alpha$ on $\ds r\in T$ is 
 $$\ds \alpha\left(r\right):=\tx{inf}\left\{x\in \coprod_{U\in S^{r}} U\right\} $$ 
 where $ \ds S^{r} := \left\{U \in \pi_0\left(\R-e\left(S\right)\right) \mid
 \tx{inf}\{y \in \G_1\left(E\right)\left(U\right)\right\} \geq r \}$. 
 The composite $\alpha$ agrees with $\phi_1\left(E\right):=\sigma$, as desired. 
 Indeed, if $U\in S^{r}$, then 
 $$\tx{inf}\{x\in U\}=\sigma\left(r\right) \hspace{.1cm} \tx{or} \hspace{.1cm} 
 \tx{inf}\{x\in U\}=\sigma\left(r'\right),$$ 
 for some $r'>r$. But $\ds \sigma\left(r'\right) \geq \sigma\left(r\right)$ whenever $r' > r$, 
 which implies 
 $$\ds \tx{inf}\left\{x\in \coprod_{U\in S^{r}} U\right\} = \sigma\left(r\right).$$ 

In summary, we have shown that (\ref{finrelation}) commutes on objects and morphisms, 
which, as previously argued, implies that $\G_1$ respects composition. 
Therefore, $\G_1$ is a functor, and moreover is defined naturally over $\Fin^\op$.

By induction we assume the existence of $\G_{n-1}$ in the following diagram:
\[ \begin{tikzcd} 
\Exit\left(\Ranu\left(\un{\R}^{n-1}\right)\right) \ar{dr}[swap]{\Phi_{n-1}} \ar{r}{\G_{n-1}} 
& \mathbf{\Theta}_{n-1}^{\Act} \ar{d}{\tau_{n-1}} \\
& \Fun\left(\{1< \cdots < n-1\}, \Fin^\op\right).
\end{tikzcd} \]
In particular, this implies that $\G_{n-1}$ is over $\Fin^\op$ for each $\ds 1 \leq i \leq n-1$. 
In other words the following diagram commutes
\begin{equation} \label{iGn1fin} 
\begin{tikzcd}
\Exit\left(\Ranu\left(\un{\R}^{n-1}\right)\right) \ar{r}{\G_{n-1}} \ar[swap]{ddr}{\phi_i} 
& \mathbf{\Theta}_{n-1}^{\Act} \ar{d}{\tx{tr}_i} \\
 & \mathbf{\Theta}_i^{\Act} \ar{d}{\gamma_i} \\
 & \Fin^\op
 \end{tikzcd} 
 \end{equation} for each $1 \leq i \leq n-1$.
 Recall that $\tx{tr}_i$ denotes the $\left(n-1-i\right)$-fold self-composite 
 of the truncation map $\tr$ (Obs. \ref{tr}).

We take advantage of the definition of $\mathbf{\Theta}_n^\Act$ as a pullback (Def. \ref{act})
and define $\G_n$ by defining $\ds \Psi$ and $\ds \Gamma$ 
such that the following diagram of $\infty$-categories
\[ \ds
\begin{tikzcd} 
\Exit\bl(\Ranu(\un{\R}^n)\br) \arrow[drr, bend left, "\Psi"] 
\arrow[ddr, bend right, swap, "\Gamma"] \ar[dr, dashrightarrow, "\G_n"] && 
\\  & \mathbf{\Theta}_n^{\Act} \ar{r} \ar{d} 
& \Fin^\op \wr \mathbf{\Theta}_{n-1}^{\Act} \arrow{d}{\tx{frgt}} \\ 
& \mathbf{\Theta}_1^{\Act} \arrow{r} & \Fin^\op 
\end{tikzcd}
\]
commutes.
$\Gamma$ is defined to be the composite of the forgetful functor 
$\rho$ followed by $\G_1$, $\G_1 \circ \rho$, where $\rho$ is the functor 
from Observation~\ref{rho} which forgets all but the first coordinate data. 

$\Psi$ is defined to be the composite of the functor 
$$ \Exit\bl(\Ranu(\un{\R}^n)\br) \xra{\pi} 
\Fin^\op \wr \Exit\left(\Ranu\left(\un{\R}^{n-1}\right)\right)$$ 
from Observation~\ref{pie}, followed by the functor 
$$\ds \Fin^\op \wr \Exit\left(\Ranu\left(\un{\R}^{n-1}\right)\right) 
\ra \Fin^\op \wr \mathbf{\Theta}_{n-1}^{\Act}$$ 
determined by the identity on $\Fin^\op$ and the functor $\ds \G_{n-1}$ 
assumed by the inductive hypothesis. 
Thus, $\G_n$ is a well-defined functor.

Unwinding the above definition of $\G_n$, an inductive description of $\G_n$ is apparent. 
We explicate this inductive description on objects and morphisms as follows.
Let $\ds \un{S} \xhra{\un{e}} \un{\R}^n$ be an object in $\Exit\bl(\Ranu(\un{\R}^n)\br)$. 
Its value under $\G_n$ is inductively defined as 
$$ \G_n\left( \un{S} \xhra{\un{e}} \un{\R}^n\right) :=
\G_1\left(S_1 \xhra{e_1} \R\right)\left(\G_{n-1}\left(\left(\un{S}\right)_s 
\xhra{\un{{e}}_{|_{\left(\un{S}\right)_s}}} \un{\R}^{n-1}\right)\right)$$
where $ \ds \left(\un{S}\right)_s \xhra{\un{{e}}_{|_{\left(\un{S}\right)_s}}} \un{\R}^{n-1}$ 
denotes the image of an object (\ref{pbmor}) in 
$\Exit\left(\Ranu\left(\un{\R}^{n-1}\right)\right)$ under $\pi$.

Let $\ds \cylr\left(\un{T}  \xra{\un{\sigma}} \un{S}\right) \xhra{\un{E}} \un{\R}^n \times \Delta^1$ 
be a morphism from $ \ds \un{S} \xhra{\un{e}} \un{\R}^n$ to $\ds \un{T} \xhra{\un{d}} \un{\R}^n$  
in $\Exit\bl(\Ranu(\un{\R}^n)\br)$. 
Its value under $\G_n$ is inductively defined by
\begin{itemize} 
\item  the morphism $\ds \G_1\left(S_1 \xhra{e_1} \R\right) \xra{\G_1\left(\cylr\left(\sigma_1\right) 
\xhra{E_1} \R \times \Delta^1\right)} \G_1\left(T_1 \xhra{d_1} \R\right) $ in $\Delta^{\Act}$  
\item  for each pair $\left(t \in T_1,s \in S_1\right)$ such that $\ds \sigma_1\left(t\right)=s$, 
the morphism given by the image of (\ref{pbmor}) under $\G_{n-1}$ in $\mathbf{\Theta}_{n-1}^{\Act}$. 
\end{itemize} 

Next we will show that for each $1 \leq i \leq n$, the diagram of $\infty$-categories
\begin{equation} \label{iGnfin} 
\begin{tikzcd}
\Exit\bl(\Ranu(\un{\R}^n)\br) \ar{r}{\G_n} \ar[swap]{ddr}{\phi_i} 
& \mathbf{\Theta}_n^{\Act} \ar{d}{\tx{tr}_i} \\
 & \mathbf{\Theta}_i^{\Act} \ar{d}{\gamma_i} \\
 & \Fin^\op
 \end{tikzcd} 
 \end{equation}
 commutes.
For the cases $1 \leq i \leq n-1$, this diagram follows 
by the inductive hypothesis wherein we assume commutativity of (\ref{iGn1fin}). 
For the remaining case, $i=n$, we use the inductive definitions of $\G_n$ and 
$\gamma_n$ in terms of $\G_1$ and $\G_{n-1}$, and $\gamma_1$ and $\gamma_{n-1}$, respectively. 
Then indeed, in employing the commutativity of (\ref{finrelation}) and (\ref{iGn1fin}) for $i=n-1$, 
we see that for the case $i=n$, (\ref{iGnfin}) must commute. 
Through Observation~\ref{phii} wherein the functor $\Phi_n$ 
was defined in terms of $\phi_i$ for $1 \leq i \leq n$, 
commutativity of this diagram for each $1 \leq i \leq n$ 
compiles to prove that $\G_n$ is over $\Fun\left(\{1 < \cdots < n\}, \Fin^\op\right)$, 
completing the proof.
\end{proof}

Next we show that $\G_n$ from Lemma~\ref{functor} is an equivalence of $\infty$-categories. 
\begin{theorem} \label{inteq} 
For each $n \geq 1$, the functor 
$$\G_n: \Exit\bl(\Ranu(\un{\R}^n)\br) \overset{ \simeq}\lra \mathbf{\Theta}_n^{\tx{act}}$$ 
over $\Fun\left(\{1< \cdots < n\}, \Fin^\op\right)$
is an equivalence of $\infty$-categories.
\end{theorem}

\begin{proof} 
We proceed by induction on $n$.
For the base case $n=1$ we show that $\G_1$ is essentially surjective and fully faithful -- 
the former follows easily. 
Let $\ds [p] \in  \mathbf{\Delta}^{\Act}$. 
Define the set $\ds T_p := \{1,2,...,p\}$ together with the object 
$ \ds T_p \overset{d}\hookrightarrow \R $ in $\Exit\left(\Ranu\left(\un{\R}\right)\right)$, 
given by $\ds i \mapsto i$. Then $\ds [p]$ is isomorphic to 
$\ds \G_1\left(T_p\right) :=\pi_o\left(\R-d\left(T_p\right)\right)$ in $\mathbf{\Delta}$, 
with the isomorphism given by $\ds i \mapsto \left[i+\frac{1}{2}\right]$. 

Fix a pair of objects $S \xhra{e} \R$ and $T \xhra{d} \R$ in 
$\Exit\left(\Ranu\left(\un{\R}\right)\right)$. 
Showing fully faithfulness of $\G_1$ amounts to showing that the map 
induced by $\G_1$ between corresponding hom-spaces 
\begin{equation}\label{arrow} 
\ds \tx{Hom}_{\Exit\left(\Ranu\left(\un{\R}\right)\right)}\left(e, d\right) 
\xra{\G_1} \tx{Hom}_{\mathbf{\Delta}^{\Act}}\left(\pi_o\left(\R-e\left(S\right)\right), 
\pi_o\left(\R-d\left(T\right)\right)\right) 
\end{equation} 
 is a surjection on connected components with contractible fibers.  
Fix a morphism 
$$\ds \pi_0\left(\R-e\left(S\right)\right) \xra{\varphi} 
\pi_0\left(\R-d\left(T\right)\right)$$ 
in $\ds \Delta^{\Act}$. Any morphism 
\begin{equation} \label{A} 
\ds \cylr\left(T \xra{\gamma_1\left(\varphi\right)} S\right) 
\xhookrightarrow{E} \R \times \Delta^1 
\end{equation}  
 in $ \ds \tx{Hom}_{\Exit\left(\Ranu\left(\un{\R}\right)\right)}\left(e, d\right)$ 
 is in the fiber of $\G_1$ over $\varphi$. 
 Indeed, in Observation~\ref{injective} we saw that $\ds \gamma_1$ 
 is injective on hom-sets. Thus, commutativity of (\ref{finrelation}) guarantees 
 that $\ds E$ is in the fiber of $\G_1$ over $\varphi$. Hence, (\ref{arrow}) 
 is a surjection on connected components. 
 
The fiber of (\ref{arrow}) over $\varphi$ is the topological space of 
embeddings $\ds \cylr\left(\gamma_1 \circ \varphi\right) \xhra{E} \R \times \Delta^1$ 
over $\Delta^1$ such that $E_{|S}=e$ and $E_{|T\times\{1\}} = d$, 
which we denote by 
$$\G_1^{-1}\left(\varphi\right) \cong 
\tx{Emb}^{e,d}_{/\Delta^1}\left(\cylr\left(\gamma_1\left(\varphi\right)\right), \R\times \Delta^1\right)$$   
under the compact-open topology. 
We want to show that this space is contractible. 
Fix an embedding $\tilde{E}$ in the fiber of $\G_1$ over $\varphi$. 
Let $\ds \mathbb{S}^k \xra{\psi} \G_1^{-1}\left(\varphi\right)$ be continuous 
and based at $\ds \tilde{E}$. 
We construct a null-homotopy of $\psi$. 
For each $\ds z \in \s^n$, denote the image of $z$ under $\psi$ by $\psi_z.$ 
The straight-line homotopy, $H_z$, from $\psi_z$ to $\tilde{E}$ 
defined by 
$$ H_z\left(x,t\right) = \left(1-t\right)\psi_z\left(x\right) + t\tilde{E}\left(x\right)$$ 
names a path from $\psi_z$ to $\tilde{E}$ in $\G_1^{-1}\left(\varphi\right)$. 
For each $z \in \s^k$, we let each path $H_z$ run simultaneously to name 
a null-homotopy of $\psi$ to the constant path at $\{\tilde{E}\}$. 
Explicitly, the null-homotopy $ \s^k \times [0,1] \ra \G_1^{-1}\left(\varphi\right)$ 
is given by $ \left(z,t\right) \mapsto H_z\left(-,t\right)$. 

For the inductive step we want to show that $\ds \G_n$ is essentially surjective and fully faithful
if $\G_{n-1}$ is. 
Let $\ds [k]\left(T_s\right)$ be an object in $\ds  \mathbf{\Theta}_n^{\Act}$. 
Because $\G_1$ is essentially surjective, 
we may choose an object of $\Exit\left(\Ranu\left(\un{\R}\right)\right)$
\begin{equation} \label{1obj} 
\ds \{1,...,k\} \xhookrightarrow{e} \R
\end{equation}
in the fiber of $\G_1$ over $[k]$.
 Likewise, by essential surjectivity of $\G_{n-1}$, 
 for each $ s \in \{1,...,k\}$, we may choose an object of 
 $\Exit\left(\Ranu\left(\un{\R}^{n-1}\right)\right)$ that is in the fiber of $\G_{n-1}$ over $T_s$:
\begin{equation} \label{iobj} \ds
\begin{tikzcd}
\left(S_{n-1}\right)_s \ar[hookrightarrow]{rr}{\left(e_{n-1}\right)_s} 
\ar{d}[swap]{\left(\tau_{n-2}\right)_s}  && \R^{n-1}  \ar[twoheadrightarrow]{d}{\tx{pr}_{< n-1} } \\
\left(S_{n-2}\right)_s \ar[hookrightarrow]{rr}{\left(e_{n-2}\right)_s} 
\ar{d}[swap]{\left(\tau_{n-3}\right)_s}   && \R^{n-2}  \ar[twoheadrightarrow]{d}{\tx{pr}_{< n-2} } \\
\vdots \ar{d} &&  \vdots \ar[twoheadrightarrow]{d} \\
\left(S_1\right)_s \ar[hookrightarrow]{rr}{\left(e_1\right)_s}   && \R .
\end{tikzcd}
 \end{equation} 
The choices (\ref{1obj}) and (\ref{iobj}) for each $s$
uniquely determine an object of $\ds \Exit\bl(\Ranu(\un{\R}^n)\br)$ 
that is in the fiber of $\ds \G_n$ over $\ds [k]\left(T_s\right)$:
\begin{equation} \label{obj} \ds
\begin{tikzcd}
\ds \coprod_{1 \leq s \leq k}\left(S_{n-1}\right)_s 
\ar[hookrightarrow]{rrr}{\coprod\{e\left(s\right)\}\times \left(e_{n-1}\right)_s  } 
\ar{d}[swap]{ \coprod\left(\tau_{n-2}\right)_s}  &&& \R \times \R^{n-1}  
\ar[twoheadrightarrow]{d}{\tx{pr}_{< n} } \\
\ds \coprod_{1 \leq s \leq k} \left(S_{n-2}\right)_s 
\ar[hookrightarrow]{rrr}{ \coprod\{e\left(s\right)\}\times \left(e_{n-2}\right)_s } 
\ar{d}[swap]{ \coprod\left(\tau_{n-3}\right)_s}  &&& \R \times \R^{n-2}  
\ar[twoheadrightarrow]{d}{\tx{pr}_{< n-1} } \\
\vdots \ar{d} &&&  \vdots \ar[twoheadrightarrow]{d} \\
\ds \coprod_{1 \leq s \leq k} \left(S_1\right)_s 
\ar{d}[swap]{\coprod\{s\}} \ar[hookrightarrow]{rrr}{\coprod\{e\left(s\right)\} \times \left(e_1\right)_s }   
&&& \R \times \R \ar[twoheadrightarrow]{d}{\tx{pr}_{<2}}\\
 \{1,...,k\} \ar[hookrightarrow]{rrr}{e} &&& \R
\end{tikzcd} 
\end{equation}
where each map defined in terms of a coproduct is indexed 
over $1 \leq s \leq k$, and $\{e\left(s\right)\}$ and 
$\{s\}$ denote the constant maps at $e\left(s\right)$ and $s$, respectively. 
 
Fix a pair of objects $ \ds \un{T} \xhra{\un{d}} \un{\R}^n$ 
and $\ds \un{S} \xhra{\un{e}} \un{\R}^n$ in $\Exit\bl(\Ranu(\un{\R}^n)\br)$. 
We show fully faithfulness of $\G_n$ by showing that the map induced by $\G_n$ between hom-spaces
\begin{equation}\label{arrown} 
\ds \tx{Hom}_{\Exit\bl(\Ranu(\un{\R}^n)\br)}\left( \un{e}, \un{d}\right) 
\xra{\G_n} \tx{Hom}_{\mathbf{\Theta}_n^{\Act}}\left(\G_n\left( \un{e}\right), \G_n\left(\un{d}\right)\right) 
\end{equation}
is a surjection on connected components with contractible fibers. 
Fix a morphism  
$$ \ds  \G_n\left(\un{e}\right)\xra{\varphi}  \G_n\left(\un{d}\right) $$ 
in $\mathbf{\Theta}_n^{\Act}$. 
Using the inductive description of $\G_n$, $\varphi$ is given by: 
\begin{itemize} 
\item   a morphism $\ds \G_1\left(S_1 \xhra{e_1} \R\right)  
\xra{\varphi_1}  \G_1\left(T_1 \xhra{d_1} \R\right) $ in $\mathbf{\Delta}^{\Act}$  
\item  for each pair $\left(r \in T_1, s \in S_1\right)$ such that 
$\ds \gamma_1\left(\varphi_1\left(r\right)\right)=s$, 
a morphism 
$$ \G_{n-1}\left(\left(\un{S}\right)_s \xhra{\un{{e}}_{|_{\left(\un{S}\right)_s}}} \un{\R}^{n-1}\right) \xra{\varphi_r}     \G_{n-1}\left(\left(\un{T}\right)_r \xhra{\un{{d}}_{|_{\left(\un{T}\right)_r}}} \un{\R}^{n-1}\right) $$
 in $\mathbf{\Theta}_{n-1}^{\Act}$.
\end{itemize}
Using the base case and the inductive hypothesis, 
we define a morphism that is in the fiber of $\G_n$ over $\varphi$.
By fullness of $\G_1$, we may choose a morphism  in the fiber of $ \G_1$ over $\varphi_1$
\begin{equation} \label{first} 
\cylr\left(\gamma_1\left(\varphi_1\right)\right) \xhookrightarrow{E_1} \R \times \Delta^1 
\end{equation}
which is defined over the map of finite sets $T_1 \xra{\gamma_1\circ \varphi_1} S_1$. 

By fullness of $\G_{n-1}$ as assumed by the inductive hypothesis, 
for each pair $\left(r \in T_1, s \in S_1\right)$ such that 
$\ds \gamma_1\left(\varphi_1\left(r\right)\right)=s$, 
we may choose a morphism in the fiber of $\G_{n-1}$ over $\varphi_r$
\begin{equation}\label{leveldownmor}
\begin{tikzcd} 
{\sf cylr}\left(\gamma_{n-1} \circ \varphi_r\right) 
\ar[hookrightarrow]{rr}{\left(E_n\right)_r} \ar{d} & & 
\R^{n-1} \times \Delta^1 \ar[twoheadrightarrow]{d}{\tx{pr}_{< n-1}\times \tx{id}_{\Delta^1}} \\
{\sf cylr}\left(\gamma_{n-2} \circ \tr_{n-2} \circ \varphi_r\right) 
\ar[hookrightarrow]{rr}{\left(E_{n-1}\right)_r} \ar{d}  && 
\R^{n-2} \times \Delta^1 \ar[twoheadrightarrow]{d}{\tx{pr}_{< n-2}\times \tx{id}_{\Delta^1}}  \\
\vdots \ar{d} & & \vdots \ar[twoheadrightarrow]{d} \\
{\sf cylr}\left(\gamma_1 \circ \tr_1 \circ \varphi_r\right) \ar[hookrightarrow]{rr}{\left(E_{2}\right)_r}   
&& \R \times \Delta^1
\end{tikzcd}  
\end{equation} 
Note that (\ref{iGn1fin}) guarantees that (\ref{leveldownmor}) 
must be defined over the diagram of finite sets
\begin{equation}
\begin{tikzcd} \label{sets}
\left(T_{n}\right)_r \ar{rrr}{\gamma_{n-1} \circ \varphi_r} 
\ar{d}[swap]{{\omega_{n-1}}_{|_\bullet}} &&& 
\left(S_{n}\right)_{s} \ar{d}{{\tau_{n-1}}_{|_\bullet}} \\
\left(T_{n-1}\right)_r \ar{rrr}{\gamma_{n-2} \circ \tr_{n-1}\circ \varphi_r} 
\ar{d}[swap]{{\omega_{n-2}}_{|_\bullet}} &&& \left(S_{n-1}\right)_{s} \ar{d}{{\tau_{n-2}}_{|_\bullet}} \\
\vdots \ar{d} & && \vdots \ar{d} \\
\left(T_{2}\right)_r \ar{rrr}{\gamma_{1} \circ \tr_{1}\circ \varphi_r}  &&& \left(S_{2}\right)_{s}. 
\end{tikzcd} 
\end{equation} 

Using (\ref{first}) and (\ref{leveldownmor}), we define a morphism 
in the fiber of $\ds \G_n$ over $\varphi$
\begin{equation} \label{cylmor}
\begin{tikzcd}
\ds {\sf cylr}\left(\coprod_{r \in T_1}\gamma_{n-1} \circ \varphi_r\right)  
\ar[hookrightarrow]{rrr}{\coprod\{E_1\left(r\right)\} \times \left(E_n\right)_r} \ar{d}  
&&& \left(\R \times \R^{n-1}\right) \times \Delta^1 
\ar[twoheadrightarrow]{d}{\tx{pr}_{< n} \times \tx{id}_{\Delta^1}} \\
\ds {\sf cylr}\left(\coprod_{r \in T_1}\gamma_{n-2}\circ \tr_{n-2} \circ \varphi_r\right) 
\ar[hookrightarrow]{rrr}{\coprod\{E_1\left(r\right)\} \times \left(E_{n-1}\right)_r} \ar{d}  
&&& \left(\R \times \R^{n-2}\right) \times \Delta^1 
\ar[twoheadrightarrow]{d}{\tx{pr}_{< n-1} \times \tx{id}_{\Delta^1}} \\
\vdots \ar{d} &&& \vdots \ar[twoheadrightarrow]{d} \\
\ds {\sf cylr}\left(\coprod_{r \in T_1}\gamma_1 \circ \tr_1 \circ \varphi_r\right) 
\ar[hookrightarrow]{rrr}{\coprod\{E_1\left(r\right)\} \times \left(E_{2}\right)_r} \ar{d}  
&&& \left(\R \times \R\right) \times \Delta^1 
\ar[twoheadrightarrow]{d}{\tx{pr}_{< 2} \times \tx{id}_{\Delta^1}}\\
 {\sf cylr}\left(\gamma_1\left(\varphi_1\right)\right) \ar[hookrightarrow]{rrr}{E_1} 
 &&& \R \times \Delta^1
\end{tikzcd}
\end{equation} 
where $\ds \{r\} $ and $\{E_1\left(r\right)\}$ 
denote the constant map at $r$ and $E_1\left(r\right)$, respectively, 
and (\ref{cylmor}) is defined over the diagram of finite sets
\begin{equation}
\begin{tikzcd} 
\ds T_n = \coprod_{r \in T_1} \left(T_n\right)_r 
\ar{rrr}{\coprod \gamma_{n-1} \circ \varphi_r} \ar{d}[swap]{\omega_{n-1}} 
&&& \ds \coprod_{s \in S_1} \left(S_n\right)_s= S_n  \ar{d}{\tau_{n-1}} \\
 \ds T_{n-1} = \coprod_{r \in T_1} \left(T_{n-1}\right)_r 
 \ar{rrr}{\coprod \gamma_{n-2} \circ \tr_{n-2} \circ \varphi_r} \ar{d}[swap]{\omega_{n-2}} 
 &&& \ds \coprod_{s \in S_1} \left(S_{n-1}\right)_s= S_{n-1} \ar{d}{\tau_{n-2}}  \\
\vdots \ar{d} &&& \vdots \ar{d} \\
\ds T_2 = \coprod_{r \in T_1} \left(T_2\right)_r 
\ar{rrr}{\coprod \gamma_{1} \circ \tr_1 \circ \varphi_r} \ar{d}[swap]{\omega_1=\coprod \{r\}} 
&&& \ds  \coprod_{s \in S_1} \left(S_2\right)_s= S_2  \ar{d}{\coprod \{s\}= \tau^1} \\
\ds T_1= \coprod_{r\in T_1}r \ar{rrr}{\gamma_1 \circ \varphi_1} 
&&& \ds \coprod_{s \in S_1} s = S_1.
\end{tikzcd} 
\end{equation}

Lastly, we wish to show that each fiber of (\ref{arrown}) is contractible. 
The fiber of $\G_n$ in (\ref{arrown}) over $\varphi$  is, 
under the compact-open topology, the topological space of compatible embeddings 
\begin{equation} \label{emb}\ds
\begin{tikzcd}
\tx{cyl}\left(\gamma_n \circ \varphi\right) \ar[hookrightarrow]{r}{E_n} \ar{d}  
& \R^n \times \Delta^1 \ar[twoheadrightarrow]{d}{\tx{pr}_{< n} \times \tx{id}} \\
\tx{cyl}\left(\gamma_{n-1}' \circ \tx{tr}_{n-1}\circ \varphi\right) \ar[hookrightarrow]{r}{E_{n-1}} \ar{d}  
& \R^{n-1} \times \Delta^1 \ar[twoheadrightarrow]{d}{\tx{pr}_{< n-1} \times \tx{id}} \\
\vdots \ar{d} &  \vdots \ar[twoheadrightarrow]{d} \\
\tx{cyl}\left(\gamma_1 \circ \tx{tr}_1 \circ \varphi\right) \ar[hookrightarrow]{r}{E_1}   
& \R \times \Delta^1
\end{tikzcd}
\end{equation} 
over $\Delta^1$ such that $E_{|S_n}=e_n$ and $E_{|T_n \times \{1\}}=d_n$. 
Note that (\ref{emb}) guarantees that each morphism 
in $\G_n^{-1}\left(\varphi\right)$ is defined over the diagram of finite sets
\begin{equation} 
\begin{tikzcd} 
\ds T_n  \ar{rr}{ \gamma'_{n} \circ \varphi} 
\ar{d}[swap]{\omega_{n-1}} && \ds  S_n  \ar{d}{\tau_{n-1}} \\
 \ds T_{n-1}  \ar{rr}{ \gamma'_{n-1} \circ \tr_{n-1} \circ \varphi} 
 \ar{d}[swap]{\omega_{n-2}} && \ds  S_{n-1} \ar{d}{\tau_{n-2}}  \\
\vdots \ar{d} && \vdots \ar{d} \\
\ds T_1 \ar{rr}{\gamma_1 \circ \tr_1 \circ \varphi} && \ds  S_1.
\end{tikzcd} 
\end{equation}
Fix an embedding $E$ in the fiber of $\G_n$ over $\varphi$. 
Let $\ds \mathbb{S}^k \xra{\psi} \G_n^{-1}\left(\varphi\right)$ be continuous and based at $\ds E$. 
We construct a null-homotopy of $\psi$.
For each $\ds z \in \s^n$, denote the image of $z$ under $\psi$ by $\psi_z.$ 
The straight-line homotopy, $H_z$, from $\psi_z$ to $E$ defined by 
$$ H_z\left(x,t\right) = \left(1-t\right)\psi_z\left(x\right) + tE\left(x\right)$$ 
names a path from $\psi_z$ to $E$ in $\G_n^{-1}\left(\varphi\right)$. 
For each $z \in \s^k$, we let each path $H_z$ run simultaneously 
to name a null-homotopy of $\psi$ to the constant path at $\{E\}$. 
Explicitly, the null-homotopy $ \s^k \times [0,1] \ra \G_n^{-1}\left(\varphi\right)$ 
is given by $ \left(z,t\right) \mapsto H_z\left(-,t\right)$. 
\end{proof}

\section{Part 2 of the proof of the main result}\label{step2}

This section contains the technical heart of the proof of Theorem~\ref{loc}.
The goal is to prove that the natural forgetful functor from Observation~\ref{forget}
\begin{equation} \label{up}
\Exit\bl(\Ranu(\un{\R}^n)\br) \to \Exit\bl(\Ranu(\R^n)\br)
\end{equation}
which forgets all but the $n$-dimensional data is a localization. 
We begin by defining a localization of $\infty$-categories.
 
\begin{definition} \label{localization} 
Let $\C$ be an $\infty$-category and let $W$ be a $\infty$-subcategory of $\C$ 
which contains the maximal $\infty$-subgroupoid $\C^\sim$ of $\C$. 
The \emph{localization} of $\C$ on $W$ is an $\infty$-category $\C[W^{-1}]$ 
and a functor $\C \xra{L} \C[W^{-1}]$ satisfying the following universal property: 
For any $\infty$-category $\D$, any functor $F$ from $\C$ to $\D$ 
uniquely factors through $L$ if and only if $F$ maps each morphism in $W$ 
to an isomorphism in $\D$; otherwise, there is no filler, as indicated by the following diagram:
\[ \begin{tikzcd}
\C \ar{r}{F} \ar[swap]{d}{L} & \D \\
\C[W^{-1}]. \ar[dashrightarrow]{ur}[swap]{\exists ! \shs \tx{or} \shs \emptyset} & 
\end{tikzcd} \]
We call $W$ the \emph{localizing $\infty$-subcategory} of $\C \to \C[W^{-1}]$.
\end{definition}

Heuristically, a localization $\C[W^{-1}]$ is the $\infty$-category 
that results after formally inverting all of the morphisms in $W \subset \C$. 

The localizing $\infty$-subcategory of the functor (\ref{up}) is the full $\infty$-subcategory of 
$\Exit\bl(\Ranu(\un{\R}^n)\br)$ consisting of all the morphisms that are sent to 
isomorphisms in $\Exit\bl(\Ranu(\R^n)\br)$. 
Recall that the isomorphisms of $\Exit\bl(\Ranu(\R^n)\br)$
are all those that are contained in a configuration space of fixed cardinality (Obs. \ref{neat}).
A morphism in $\Exit\bl(\Ranu(\un{\R}^n)\br)$ is sent to one of these isomorphisms
if and only if it too is contained in a configuration space of fixed cardinality. 
We formally define the localizing $\infty$-subcategory of functor (\ref{up}) as follows. 

\begin{definition} \label{loc subcat}
Let $W_n$ to be the $\infty$-subcategory of 
$\Exit\bigl(\Ranu(\un{\R}^n)\bigr)$ defined to be the pullback
\[ \begin{tikzcd}
W_n  \pb \ar[hookrightarrow]{r} \ar{d} & \Exit\bigl(\Ranu(\un{\R}^n)\bigr) \ar{d}{\Phi_n} \\
\Fun^{\tx{n-bij}}\bigl(\{1< \cdots <n\}, \Fin^\op\bigr) 
\ar[hookrightarrow]{r} & \Fun\bigl(\{1<\cdots<n\}, \Fin^\op\bigr)
\end{tikzcd} \] 
where $\ds \Fun^{\tx{n-bij}}\bigl(\{1< \cdots <n\}, \Fin^\op\bigr)$ 
is the subcategory of $\ds \Fun\bigl(\{1<\cdots<n\}, \Fin^\op\bigr)$ 
consisting of all the same objects and only those morphisms whose value 
under evaluation at $n$ is a bijection; 
$\Phi_n$ is the forgetful functor from Observation~\ref{phii} 
that simply remembers the underlying data of sets at each level $1 \leq i \leq n$. 
\end{definition}

\begin{lemma} \label{preloc} 
The forgetful functor from Observation~\ref{forget} 
$$ \F: \Exit\bigl(\Ranu(\un{\R}^n)\bigr) \lra \Exit\bigl(\Ranu(\R^n)\bigr)$$
is a localization of $\infty$-categories on the $\infty$-subcategory 
$W_n$ defined in Definition~\ref{loc subcat}.
\end{lemma}

\begin{notation} 
Whenever convenient, we will abuse notation 
and refer to $W_n$ as the image of $W_n$ in $\mathbf{\Theta}_n^\Act$ 
under the functor from (Lem. \ref{functor}) which was shown to be an equivalence in Theorem~\ref{inteq}. 
\end{notation}

Heuristically, $W_n$ has the same objects as $\Exit\bigl(\Ranu(\un{\R}^n)\bigr)$ 
and all those morphisms whose values under $\phi_n$ from Observation~\ref{phii} are bijections. 
Intuitively then, $\Exit\bigl(\Ranu(\un{\R}^n)\bigr)$ 
localizing on $W_n$ to $\Exit\bigl(\Ranu(\R^n)\bigr)$ is no surprise. 
Indeed, in formally declaring all those morphisms in $W_n$ to be isomorphisms, 
we forget the Fox-Neuwirth cells, which define morphisms in $\Exit\bigl(\Ranu(\un{\R}^n)\bigr)$  
and only remember cardinality, which is the defining restriction of morphisms 
in $\Exit\bigl(\Ranu(\R^n)\bigr)$. However intuitive, 
our procedure for showing Lemma~\ref{preloc} is in fact quite technical.
To explain our approach, we first need the following definition.

\begin{definition} \label{funW} 
Given an $\infty$-category $\C$ and an $\infty$-subcategory 
$W \hra \C$, $ \ds \Fun^{W}\bigl([p], \C\bigr)$ is defined to be the pullback of $\infty$-categories
\[ \begin{tikzcd} 
\Fun^{W}\bigl([p],\C\bigr) \ar{r} \pb \ar{d} & \Fun\bigl([p],\C\bigr) \ar{d} \\
\Fun\bigl([p]^{\sim}, W \bigr) \ar[hookrightarrow]{r} & \Fun\bigl([p]^{\sim}, \C\bigr) 
 \end{tikzcd} \] 
where $[p]^\sim$ denotes the underlying maximal $\infty$-subgroupoid of $[p]$.
\end{definition}

\begin{observation} \label{one} 
In the case $p=0$, $\ds \Fun^{W}\bigl([0],\C\bigr)$ is  equivalent to $W$. 
Indeed, an object is a functor $[0] \to \C$ selects out an object of $W$, 
which is precisely an object of $\C$; a morphism is a  
natural transformation between any two such functors, 
which is precisely determined by a morphism in $W$.  

A similar examination of the $p=1$ case identifies that 
$\ds \Fun^{W}\bigl([1],\C\bigr)$ is the $\infty$-category whose objects 
are morphisms of $\C$ and whose morphisms are all those natural transformations 
given by morphisms in $W$, i.e., a morphism from $c \to d$ in $\C$ to $c' \to d'$ in $\C$  
is a commutative square in $\C$
\[ \begin{tikzcd}
c \ar{r} \ar{d} & c' \ar{d} \\
d \ar{r} & d' \end{tikzcd} \]
such that both horizontal arrows are morphisms in $W$.  
In general, a $[p]$-point in $\Fun^{W}\left([1], \C \right)$ is a commutative diagram 
in $\C$ of the shape $[p] \times [1]$ such that the two $p$-simplicies $[p] \cong [p]\times \{0\}$ 
and $[p] \cong [p] \times \{1\}$ must be $[p]$-points of the $\infty$-subcategory $W$. 
\end{observation}

In \cite{AMG2} Mazel-Gee proved a theorem that identifies 
localizations of $\infty$-categories in favorable cases. 
It will be our route for proving Lemma~\ref{preloc}. 
We state it next.

\begin{theorem}[Thm. 3.8, \cite{AMG2}] \label{BcSS} 
For an $\infty$-category $\C$ and a $\infty$-subcategory containing 
the maximal $\infty$-subgroupoid of $\C$, $\C^\sim \subset W \subset \C$, 
if the classifying space $\ds\B  \Fun^W\bigl([\bullet], \C\bigr)$ is a complete Segal space, 
then it is equivalent as a simplicial space to the localization of $\C$ on $W$, 
$$ \B \Fun^W\bl([\bullet], \C\br) \simeq \C[W^{-1}].$$ 
\end{theorem}

Thus, to prove Lemma~\ref{preloc} we must show that the simplicial space 
$\ds \B {\sf Fun}^{W_n}\Bigl([\bullet],\Exit\bigl(\Ranu(\un{\R}^n)\bigr)\Bigr)$ 
is equivalent to $\Exit\bigl(\Ranu(\R^n)\bigr)$.
To do this we need the following two lemmas. 

\begin{lemma} \label{class1} 
For $p=0, 1$, there is an equivalence of spaces 
\[ \B\tx{Fun}^{W_n}\Bigl([p],\Exit\bigl(\Ranu(\un{\R}^n)\bigr)\Bigr) 
\simeq \tx{Hom}_{\Cat_{\infty}}\Bigl([p],\Exit\bigl(\Ranu(\R^n)\bigr)\Bigr) \]
 between the classifying space of the $\infty$-category 
 ${\sf Fun}^{W_n}\Bigl([p],\Exit\bigl(\Ranu(\un{\R}^n)\bigr)\Bigr)$ 
 and the hom-space in $\infty$-categories from $[p]$ to $\Exit\bigl(\Ranu(\R^n)\bigr)$.
 \end{lemma}


\begin{lemma} \label{class2} 
The classifying space 
$\ds \B {\sf Fun}^{W_n}\Bigl([\bullet],\Exit\bigl(\Ranu(\un{\R}^n)\bigr)\Bigr)$ 
is a complete Segal space. 
\end{lemma}

Let us explain how it is that Lemma~\ref{preloc} follows from these two lemmas; 
we will give a formal proof at the end of this section. 
First, note that Lemma~\ref{class2} verifies the hypothesis of Theorem~\ref{BcSS}. 
In particular then, the classifying space 
$\B\tx{Fun}^{W_n}\Bigl([\bullet],\Exit\bigl(\Ranu(\un{\R}^n)\bigr)\Bigr)$  
is determined by its values on $[0]$ and $[1]$ because it satisfies 
the Segal condition (Def.~\ref{cSegspc}). 
Through Theorem~\ref{BcSS}, Lemma~\ref{class1} identifies the space of 
objects and morphisms of the localization $\Exit\bigl(\Ranu(\un{\R}^n)\bigr)[W_n]$ 
as the space of objects and morphisms of $\Exit\bigl(\Ranu(\R^n)\bigr)$, respectively.  
The Segal condition, then, implies the desired result, 
that the localization $\Exit\bigl(\Ranu(\un{\R}^n)\bigr)[W_n^{-1}]$ 
is equivalent to the $\infty$-category $\Exit\bigl(\Ranu(\R^n)\bigr)$. 

We organize our proofs of Lemma~\ref{class1} and Lemma~\ref{class2} as follows. 
Lemma~\ref{class1} naturally decomposes into its two cases, $p=0$ and $p=1$; 
we make each case into a lemma, each of which is stated and proven in 
the subsequent subsections \S\ref{0} and \S\ref{1}. 
And lastly, \S\ref{oh} is devoted to the proof of Lemma~\ref{class2}.

\subsection{Identifying the space of objects of the localization} \label{0}
This subsection is devoted to proving the $p=0$ case of Lemma~\ref{class1}.
In light of Observation~\ref{one} wherein we observed that $\ds \Fun^{W}\bl([0],\C\br) \simeq W$, 
we rephrase this case as the following lemma.

\begin{lemma}[Lemma~\ref{class1}; $p=0$] \label{pis0} 
There is an equivalence of spaces 
$$ \B W_n \simeq \Exit\bigl(\Ranu(\R^n)\bigr)^\sim$$ 
from the classifying space of the $\infty$-subcategory 
$W_n$ of $\Exit\bigl(\Ranu(\un{\R}^n)\bigr)$ to the maximal 
$\infty$-subgroupoid of $\Exit\bigl(\Ranu(\R^n)\bigr)$.
 \end{lemma}

To prove Lemma~\ref{pis0}, we need two lemmas. 
The first states that there is an adjunction between $W_n$ 
and the subcategory of $W_n$ consisting of healthy trees (Def. \ref{tree}). 
Explicitly, this subcategory is defined as follows.
 
\begin{definition}  \label{Whealthy} 
$W_n^\h$ is the subcategory of $W_n$ defined to be the pullback 
\[ \begin{tikzcd}
W_n^\h \pb \ar[hookrightarrow]{r} \ar{d} & W_n \ar{d} \\
\Fun^{\tx{n-bij}}\Bigl(\{1< \cdots <n\}, \bl(\Fin^{\surj}\br)^\op\Bigr) 
\ar[hookrightarrow]{r} & \Fun^{\tx{n-bij}}\bl(\{1<\cdots<n\}, \Fin^\op\br)
\end{tikzcd} \] 
of categories. 
\end{definition}

Informally, the category $W_n^\h$ is the full subcategory of $W_n$ 
consisting of all those objects of $\mathbf{\Theta}_n$ that are healthy trees. 

\begin{lemma} \label{adj} 
The inclusion functor $W_n^{\h} \hookrightarrow W_n$ is a right adjoint. 
\end{lemma}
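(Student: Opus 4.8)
The plan is to construct an explicit left adjoint $L\colon W_n \to W_n^\h$ to the inclusion and exhibit a unit natural transformation, then verify the triangle identities (which are automatic once the unit is built correctly and $L$ restricts to the identity on $W_n^\h$). Recall that an object of $W_n$ is a tree $T$ of height $n$, i.e., a composable sequence of finite sets $S_n \to S_{n-1} \to \cdots \to S_1$ together with the linear-order data packaging it as an object of $\mathbf{\Theta}_n^\Act$; the object is healthy precisely when each of these maps is surjective (equivalently, when $T$ lies in the image of $\mathbf{\Theta}_n^\exit$). So $L$ should ``prune'' a tree to its healthy core: replace $S_i$ by the image of $S_n$ under the composite $S_n \to S_i$, with the linear orders restricted accordingly. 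More precisely, for $1 \le i \le n$ set $S_i^\h := \operatorname{im}(S_n \to S_i) \subseteq S_i$; the structure maps restrict to surjections $S_n = S_n^\h \twoheadrightarrow S_{n-1}^\h \twoheadrightarrow \cdots \twoheadrightarrow S_1^\h$, and the linear orders on the fibers and on $S_1$ restrict to linear orders on the fibers of these maps and on $S_1^\h$. This defines $L(T) \in W_n^\h$ on objects, and on morphisms of $W_n$ — which are morphisms in $\mathbf{\Theta}_n^\Act$ that induce a bijection at level $n$ — one checks that a morphism $T \to T'$ in $W_n$ carries the level-$i$ image of $S_n$ into the level-$i$ image of $S_n'$, so $L$ is functorial. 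The unit $\eta_T\colon T \to L(T)$... wait — since $S_i^\h \subseteq S_i$, the natural map actually goes the other way combinatorially; in $\mathbf{\Theta}_n^\Act$ (built from $\mathbf{\Delta}^\Act$, hence over $\Fin^\op$), the inclusion of underlying sets $S_i^\h \hookrightarrow S_i$ is reversed, so it names a morphism $T \to L(T)$ in $\mathbf{\Theta}_n^\Act$, which lands in $W_n$ because it is a bijection at level $n$. This is the unit.

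Next I would verify the universal property: for $T \in W_n$ and $H \in W_n^\h$, precomposition with $\eta_T$ gives a bijection $\Hom_{W_n^\h}(L(T), H) \xrightarrow{\ \cong\ } \Hom_{W_n}(T, H)$. The point is that any morphism $f\colon T \to H$ in $W_n$, because $H$ is healthy (all its structure maps surjective) and $f$ is a bijection at level $n$, must have image at every level $i$ contained in — in fact factoring through — the level-$i$ image of $T$'s top set, hence factors uniquely through $\eta_T$. Since these are all ordinary categories (or nerves thereof), this is a finite combinatorial check that reduces, via the inductive wreath-product description $\mathbf{\Theta}_n^\Act = \mathbf{\Theta}_1^\Act \wr \mathbf{\Theta}_{n-1}^\Act$, to the case $n=1$: for $\mathbf{\Delta}^\Act$, pruning $[p]$ to its healthy core is trivial (every object of $\mathbf{\Delta}$ is ``healthy'' in this degenerate sense), and the content is packaged into how the fiberwise data at higher levels interacts with bijections at the top. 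I would therefore run the argument by induction on $n$, using Observation~\ref{tr} (the truncation functor $\tr\colon \mathbf{\Theta}_n^\Act \to \mathbf{\Theta}_{n-1}^\Act$) and the compatibility of $L$ with $\tr$ to reduce the $n$ case to the $n-1$ case together with a level-$1$ check.

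Alternatively — and this may be cleaner to write — I would observe that $W_n^\h \hookrightarrow W_n$ is the pullback of an inclusion of functor categories along $\phi$ (by Definition~\ref{Whealthy}), and that $\Fun^{\tx{n-bij}}\bigl(\{1<\cdots<n\},(\Fin^\surj)^\op\bigr) \hookrightarrow \Fun^{\tx{n-bij}}\bigl(\{1<\cdots<n\},\Fin^\op\bigr)$ admits a right adjoint, namely the functor that replaces a chain $S_n \to \cdots \to S_1$ by $S_n \to \operatorname{im}(S_n\to S_{n-1}) \to \cdots \to \operatorname{im}(S_n\to S_1)$; this uses only that $(\Fin^\surj)^\op \hookrightarrow \Fin^\op$ is reflective-like along the chain (image factorization is functorial for set maps, and over the opposite category a surjection-subobject is a retract datum). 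One then checks this adjunction lifts along the pullback-square defining $W_n^\h$, using that the forgetful functor $\phi\colon \Exit(\Ranu(\un\R^n)) \to \Fun(\{1<\cdots<n\},\Fin^\op)$ is a (co)Cartesian-type fibration well enough that the right adjoint on the base lifts to a right adjoint on the total category (concretely: the embedding data is unaffected by passing to the image subsets, since one just restricts the embeddings of reversed cylinders to the relevant sub-reversed-cylinders). The main obstacle, and the step I would spend the most care on, is precisely this last lifting: confirming that the combinatorial pruning on the level of finite-set chains lifts to an honest functor on $\Exit(\Ranu(\un\R^n))$ (equivalently $\mathbf{\Theta}_n^\Act$) and that the unit lifts — this is where the geometry of the embeddings and the linear-order/fiber data in the wreath product must be seen to behave, and it is the genuinely non-formal part of the argument; everything else is diagram-chasing and induction.
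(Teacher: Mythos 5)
Your main argument — build a pruning functor $L$ by replacing each level's set with the image of the top set under the structure maps (restricting the ordered structure accordingly), observe that the resulting chain of inclusions names a morphism $\eta_T\colon T\to L(T)$ lying in $W_n$, and verify the universal factorization property inductively along $\tr$ — is essentially the paper's proof, which defines the same pruning $P_n$ recursively via the wreath-product presentation (restricting $[p](T_i)$ to the sub-poset $N_T\subset[p]$ of indices with nonempty subtree) and shows $(P_n(T),\alpha_T)$ is initial in the undercategory $(W_n^\h)^{T/}$, invoking Lemma~2.17 of \cite{AF2} to conclude that the inclusion is a right adjoint. One small correction in your alternative route: the image-factorization functor on $\Fun^{\tx{n-bij}}\bigl(\{1<\cdots<n\},\Fin^\op\bigr)$ is the \emph{left} adjoint to the inclusion (consistent with wanting the inclusion to be a right adjoint), not a right adjoint as written, and your claim that the triangle identities are ``automatic'' elides the genuine, if short, check that $L\eta_T=\mathrm{id}_{L(T)}$.
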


A left adjoint to the inclusion functor is given by forgetting all of the data 
associated to the `unhealthy' parts of the trees in $W_n$. 
We construct such a functor as follows.

\begin{construction}[The Pruning Functor, $P_n$] 
For each $n \geq 1$, we define a canonical functor 
$$P_n:W_n \to W_n^\h.$$

 For $n=1$, $P_1:=\tx{Id}_{W_1}$ since $W_1 = W_1^\h$.

 For $n\geq 2$, we define $P_n$ inductively. 
 First, for each object $T=[p](T_i) \in W_n$ for $n \geq 2$, 
 define the sub-linearly ordered set 
$$ N_T:= \left\{ 0=i_0<i_1< \cdots < i_k \bigg|
\begin{array}{lcr}
 i_j \in \{1,...,p\} \forall 1 \leq j \leq k \\
 T_i = \emptyset \iff \exists 1 \leq j \leq k \hspace{.1cm} \tx{s.t.} \hspace{.1cm} i=i_j 
\end{array} \right\} \subset [p].$$

For the case $n=2$, we define
$$ P_2: T=[p]([q_i]) \mapsto N_T([q_{i_j}])$$
on objects; the value of a morphism under $P_2$ is given by r
estriction of that morphism to $N_T$.
$P_2$ respects composition because restriction respects composition.

For general $n$, we define
$$P_n:  T=[p](T_i) \mapsto N_T(P_{n-1}(T_{i_j}))$$
on objects; the value of a morphism under $P_n$ is again 
determined by restriction of that morphism to $N_T$ together with $P_{n-1}$. 
Composition is preserved by $P_n$ because restriction and $P_{n-1}$ both respect composition. 
\end{construction}

\begin{proof}[Proof of Lemma~\ref{adj}]
We use Lemma 2.17 from \cite{AF2} which states that for a functor 
$\C \xra{F} \D$ of $\infty$-categories, $F$ is a right adjoint if and only if 
for each object $d \in \D$, the $\infty$-undercategory $\C^{d/}$ has an initial object, 
and verify that for each $T \in W_n$, the undercategory ${W_n^\h}^{T/}$  
has an initial object. Recall that the $\infty$-undercategory $\C^{d/}$ 
is defined as the pullback of $\infty$-categories
\[
\begin{tikzcd}
\C^{d/} \pb \ar{d} \ar{r} & \D^{d/} \ar{d}{\sf frgt} \\
\C \ar{r}{F} & \D 
\end{tikzcd}
\]

Fix an object $T \in W_n$. 
To define the initial object of ${W_n^\h}^{T/}$, 
we define a morphism $T \xra{\alpha_T} P_n(T)$ in $W_n$ 
such that for any morphism $T \xra{f} S$ in $W_n$ to a healthy tree $S$, 
there is a unique (up to isomorphism) factorization through $\alpha_T$
\[ \begin{tikzcd}
T \ar{r}{f} \ar{d}[swap]{\alpha_T} & S \\
P_n(T) \ar[dashrightarrow]{ur}[swap]{\exists!} & 
\end{tikzcd} \]
in $W_n$. 

We define $\alpha_T$ as follows. 
For $n=1$, $T=P_1(T)$ for each $T \in W_1$ 
and thus, we define $\alpha_T:=\tx{Id}_T$. 

To define $\alpha_T$ for $ T \in W_n$ for $n \geq 2$, 
we proceed by induction. 
Fix an object $T \in W_2$. 
Define $\alpha_T:T=[p]([q_i]) \to N_T([q_{i_j}])$ by 
\begin{enumerate}
\item[i)] $ [p] \to N_T$ is given by the assignment 
$i \mapsto 
\begin{cases} i_j, \hspace{.1cm} \tx{if} \hspace{.1cm} 
\exists 0 \leq j \leq k-1 \hspace{.1cm} \tx{s.t.} \hspace{.1cm}  
i_j \leq i < i_{j+1} \\ i_k, \hspace{.1cm} \tx{if} \hspace{.1cm}  i \geq i_k. 
\end{cases} $
\item[ii)] For each pair $(i, i_j)$ such that $i=i_j$, $[q_i] \xra{\tx{Id}} [q_{i_j}]$.
\end{enumerate}

For the general case, fix an object $T \in W_n$.  
Define $\alpha_T:T=[p](T_i) \to N_T(P_{n-1}(T_{i_j})$ by 
\begin{enumerate}
\item[i)] $ [p] \to N_T$ is the same as i) for $n=2$.
\item[ii)] For each pair $(i, i_j)$ such that $i=i_j$, 
$T_i \xra{\alpha_{T_i}} P_{n-1}(T_{i_j})$, where $\alpha_{T_i}$ 
is guaranteed by the inductive hypothesis. 
\end{enumerate}

Observe that by design each morphism $T \xra{f} S$ in $W_n$ 
to a healthy tree $S$ factors through $\alpha_T$ via $P_n(f)$:

\begin{equation} \label{trim} 
\begin{tikzcd} 
T \ar{r}{f} \ar{d}[swap]{\alpha_T} & S \\
P_n(T) \ar[dashrightarrow]{ur}[swap]{P_n(f)}. & 
\end{tikzcd} 
\end{equation}
Further, up to isomorphism $P_n(f)$ is the unique such filler.
This verifies that for each fixed object $T \in W_n$, 
the initial object of ${W_n^\h}^{T/}$ is 
$$(P_n(T), T \xra{\alpha_T} P_n(T)).$$ 
\end{proof}

\subsubsection{The space of configurations of $r$ unordered points in $\R^n$} \label{unordered} 
In this section, we prove the second lemma needed for the proof of Lemma~\ref{pis0}. 
This lemma identifies the classifying space of the following $\infty$-category.

 \begin{definition} \label{exit conf} 
 For each $r \geq 1$, $W_n^\h(r)$ is the $\infty$-subcategory of $W_n^\h$
defined to be the pullback of $\infty$-categories
\[ \begin{tikzcd}
W_n^\h(r) \pb\ar[hookrightarrow]{r} \ar{d} 
& W_n^\h\ar{d} \\
 \Fun^{n-bij}_r\Bigl(\{1< \cdots <n\}, \bl(\Fin^{\surj} \bigr)^\op\Bigr) \ar[hookrightarrow]{r} 
& \Fun^{\tx{n-bij}}\Bigl(\{1<\cdots<n\}, \bl(\Fin^{\surj}\br) ^\op\Bigr)
\end{tikzcd} \] 
where the functor category $\ds  \Fun^{n-bij}_r\Bigl(\{1< \cdots <n\}, \bl(\Fin^{\surj}\bigr)^\op \Bigr)$ 
is the full subcategory of $\ds  \Fun^{\tx{n-bij}}\Bigl(\{1<\cdots<n\}, \bl(\Fin^{\surj}\br) ^\op\Bigr)$ 
in which the value of an object upon evaluation at $n$ has cardinality $r$. 
We define $W_n^\h(0)$ to be the terminal $\infty$-category.
\end{definition}

Heuristically, an object of $W_n^\h(r)$ is a configuration of $r$ distinct points in $\R^n$ 
and a morphism is a path in the unordered configuration space of $r$ points in $\R^n$ 
such that for each  $0 < k < n$, the projection of the path onto its first $k$-coordinates 
may witness anticollision of points, but not collision. 
In other words, the morphisms are precisely exit-paths 
in the Fox-Neuwirth cell stratification $\Conf_r(\un{\R}^n)_{\Sigma_r}$, as we will soon see.

The underlying data of sets of a $[p]$-point in $W_n^\h(r)$ is restricted to be 
bijective between sets of cardinality $r$. 
This allows us to significantly simplify the description of a $[p]$-point. 
Recall the definition (\ref{ppoint}) of a $[p]$-point in $\Exit\bl(\Ranu(\un{\R}^n)\br)$. 
We may simplify the description for those in $W_n^\h(r)$ in two ways. 
First, the embedding at dimension $n$ will determine all of the embeddings 
of the lower dimensions by taking its projection. 
This is due to the fact that all of our maps between sets are, in particular, surjective. 
Thus, it suffices to only describe the $n$-dimensional embedding data. 
Second, the reverse cylinder construction on a sequence of $p$ bijections 
is trivial in that it is simply the product of the set with the topological $p$-simplex. 
Thus, we make the following observation.

\begin{observation} \label{exit-conf} 
The data of a $[p]$-point in $W_n^\h(r)$ may be described as an embedding 
\begin{equation} \label{simple-ppoint}
R \times \Delta^p \hra \R^n \times \Delta^p
\end{equation}
over $\Delta^p$ for a set $R$ of cardinality $r$ whose projection onto the first 
$k$-coordinates for each $1 \leq k < n$ may witness anticollision but not collision. 
\end{observation}

\begin{observation} \label{two} 
By definition the $\infty$-category $W^\h_n$ (Def.~\ref{loc subcat}) is 
canonically equivalent to the coproduct of $\infty$-categories $\ds \coprod_{r \geq 0}W_n^\h(r).$ 
 \end{observation}

Recall Ayala-Francis-Rozenblyum-Tanaka's exit-path $\infty$-category construction
applied to the Fox-Neuwirth cell stratification $\Conf_r(\un{\R}^n)_{\Sigma_r}$ (\S\ref{exitFN}). 
We wish to show that it is canonically equivalent to the $\infty$-category $W_n^\h(r)$.

\begin{notation} 
For the remainder of the paper, we let $\underline{r}$ denote the set $\{1,...,r\}$. 
\end{notation}

\begin{lemma} \label{confW}
There is a canonical equivalence between $\infty$-categories 
$$W_n^\h(r) \simeq \Exit\left(\Conf_r\left(\un{\R}^n\right)_{\Sigma_r}\right)$$
over $(\Fin^\surj)^\op$.
\end{lemma}

\begin{proof} 
Both $W_n^\h(r)$ and $\Exit\left(\Conf_r\left(\un{\R}^n\right)_{\Sigma_r}\right)$ 
are modeled as complete Segal spaces. 
Thus, they are completely determined by their $[0]$ and $[1]$ values. 
Therefore, it suffices to show an equivalence between the simplicial data at $[0]$ and $[1]$,
which is nearly immediate from their definitions. 
	
On the level of objects this is in fact immediate -- 
the objects of both are points in the underlying stratified space, 
which are configurations of $r$ unordered points in $\R^n$. 
	
On the level of morphisms, we recall the definitions of each. 
In Observation~\ref{exit-conf}, we saw that a morphism in $W_n^\h(r)$ is an embedding 
$$ R \times \Delta^1 \hra \R^n \times \Delta^1$$ 
over $\Delta^1$ and for a set $R$ of cardinality $r$, 
whose projections may witness anticollision, but not collision. 
This restriction (which we note is actually part of the data of the definition)
about anticolliding projections is precisely encoded by the poset 
$nOrd(\un{r})_{\Sigma_r}$ in the Fox-Neuwirth stratification $\Conf_r(\un{\R}^n)_{\Sigma_r}$.
Now recall Ayala-Francis-Rozenblyum-Tanaka's exit-path $\infty$-category functor (Def. \ref{AFRexit}).
The value $\Exit(X)$ of a conically smooth stratified space $X$ 
is a simplicial space assigning $[1]$ to the hom-space 
$\Strat(\Delta^1, X)$ of the $\infty$-category of conically smooth stratified spaces. 
Here, $\Delta^1$ has the standard stratification defined in Example~\ref{standard} over $[1]$. 
Thus, a $[1]$-point of $\Exit\left(\Conf_r\left(\un{\R}^n\right)_{\Sigma_r}\right)$ 
is a stratified map from $\Delta^1$ to $\Conf_r(\un{\R}^n)_{\Sigma_r}$.
The image of such a map is a path in $\Conf_r(\R^n)_{\Sigma_r}$
that is allowed to immediately exit from its present Fox-Neuwirth cell 
to a higher-dimensional one, but not vice-versa.
Such a path witnesses a configuration of $r$ points moving from more coordinate coincidence to less.
Each subsequent projection of such a path would witness anticollision of points, but not collision.
This identifies the $[1]$-points. The equivalence between the structure maps are evident. 
\end{proof}

We are finally ready to prove Lemma~\ref{pis0}. 

\begin{proof}[Proof of Lemma~\ref{pis0}]
Corollary 2.1.28 in \cite{AMG} states that an adjunction between 
$\infty$-categories yields an equivalence between their classifying spaces. 
We apply this result to the adjunction from Lemma~\ref{adj} 
to obtain an equivalence of the classifying spaces, 
$$\B W_n \simeq \B W_n^{\h}.$$

In Observation~\ref{two}, we saw that by definition $W^{\h}_n$ 
is equivalent to the coproduct $\ds \coprod_{r \geq 0} W_n^\h(r)$. 
Then, in Lemma~\ref{confW}, we proved an equivalence of $\infty$-categories 
between $\Exit(\Conf_r(\R^n)_{\Sigma_r})$ and $W_n^\h(r)$. 
Taking the classifying space of these two equivalences yields the equivalence of spaces
\begin{equation} \label{peas}
\ds \B W_n^{\h} \simeq \coprod_{r \geq 0}\B\Exit\bl(\Conf_r(\un{\R}^n)_{\Sigma_r}\br).
\end{equation}

In Corollary~\ref{conically smooth} we proved that the Fox-Neuwirth stratification 
$\Conf_r(\un{\R}^n)_{\Sigma_r}$ is conically smooth. 
In \cite{AFT}  Corollary~1.2.7 states that the classifying space of 
the exit-path $\infty$-category of a conically smooth stratified space 
is homotopy equivalent to the topological space underlying the stratified space. 
Thus, we have the following equivalence of spaces
\begin{equation} \label{carrots} 
\ds \coprod_{r \geq 0}\B\Exit\bl(\Conf_r(\un{\R}^n)_{\Sigma_r}\br) 
\simeq \coprod_{r \geq 0} \Conf_r(\R^n)_{\Sigma_r}.
\end{equation}
Lastly, $ \ds \coprod_{r \geq 0} \Conf_r(\R^n)_{\Sigma_r}$ is
by definition equivalent to $\Exit\bigl(\Ranu(\R^n)\bigr)^\sim$ 
the maximal $\infty$-subgroupoid of $\ds \Exit\bigl(\Ranu(\R^n)\bigr)$.
\end{proof}

\subsection{Identifying the space of morphisms of the localization} \label{1}
By proving Lemma~\ref{pis0}, we have identified that the 
space of objects of the localization $\Exit\bigl(\Ranu(\un{\R}^n)\bigr)[W_n^{-1}]$ 
is equivalent to the space of objects of $\Exit(\Ranu(\R^n))$.
This subsection is devoted to identifying the space of morphisms 
by proving the $p=1$ case of Lemma~\ref{class1}; we restate it as follows. 

\begin{lemma}[Lemma~\ref{class1}; $p=1$] \label{pis1} 
There is an equivalence of spaces 
\[ \B{\sf Fun}^{W_n}\Bigl([1],\Exit\bigl(\Ranu(\un{\R}^n)\bigr)\Bigr) 
\simeq \mor\Bigl(\Exit\bigl(\Ranu(\R^n)\bigr)\Bigr) \]
induced by the forgetful functor 
$\Exit\bigl(\Ranu(\un{\R}^n)\bigr) \to \Exit\bigl(\Ranu(\R^n)\bigr)$. 
 \end{lemma}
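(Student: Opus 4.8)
\textbf{Proof proposal for Lemma~\ref{pis1}.}

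The plan is to mirror the structure of the $p=0$ case (Lemma~\ref{pis0}) one categorical level up, now working with arrows rather than objects. Recall from Observation~\ref{one} that ${\sf Fun}^{W_n}\bigl([1],\Exit(\Ranu(\un{\R}^n))\bigr)$ is the $\infty$-category whose objects are morphisms of $\Exit(\Ranu(\un{\R}^n))$ and whose morphisms are commuting squares both of whose horizontal arrows lie in $W_n$. The first step is therefore to set up the analogue of the pruning adjunction of Lemma~\ref{adj} for arrows. Namely, I would define the full subcategory of ${\sf Fun}^{W_n}\bigl([1],\Exit(\Ranu(\un{\R}^n))\bigr)$ spanned by those arrows $\cylr(\un{\sigma})\xhra{\un{E}} \un{\R}^n\times\Delta^1$ whose source \emph{and} target are healthy trees (equivalently, whose underlying diagram of finite sets lands in $(\Fin^\surj)^\op$ at every level) and show, by applying the pruning functor $P_n$ fiberwise over $\Delta^1$, that this inclusion is a right adjoint. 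Concretely, a morphism $\un E$ with source $S\hra \un{\R}^n$ and target $T\hra\un{\R}^n$ gets replaced by the restriction of $\un E$ over the sub-linearly-ordered sets $N_S, N_T$ of non-empty coordinates at each level, and the counit square is built exactly as in (\ref{trim}); compatibility over $\Delta^1$ is automatic since restriction commutes with the structure maps. By Corollary~2.1.28 of \cite{AMG}, the adjunction gives $\B{\sf Fun}^{W_n}\bigl([1],\Exit(\Ranu(\un{\R}^n))\bigr)\simeq \B\bigl({\sf Fun}^{W_n}\bigl([1],\Exit(\Ranu(\un{\R}^n))\bigr)^\h\bigr)$, where the superscript $\h$ denotes the healthy-arrows subcategory.

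The second step is to identify this healthy-arrows classifying space with a configuration space of paths. Decomposing over the source cardinality and over the map of leaf-sets $T_n\to S_n$ exactly as in Observation~\ref{two}, I expect
\[
\B\bigl({\sf Fun}^{W_n}\bigl([1],\Exit(\Ranu(\un{\R}^n))\bigr)^\h\bigr)
~\simeq~
\coprod \B\Exit\bigl(\cylr(\un{\sigma})\hookrightarrow \un{\R}^n\times\Delta^1\bigr)_{\Sigma}
\]
where the coproduct runs over isomorphism classes of surjective sequences $\un\sigma$ of finite sets and the $\infty$-category on the right is the evident ``arrow'' refinement, whose classifying space I want to compare to the space of embeddings $\cylr(\un\sigma)\hra \R^n\times\Delta^1$. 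Here I would re-run the argument of Corollary~\ref{four}: the analogue of Corollary~\ref{Cor1} is that $\B$ of the refined exit-path $\infty$-category of configurations along a fixed $\un\sigma$ (with marked, i.e.\ labelled, leaves) is homotopy equivalent to the space of such embeddings, which is contractible onto the subspace fixing the endpoint embeddings — this is precisely the contractibility of the fibers $\G_n^{-1}(\varphi)$ established by the straight-line homotopy in the proof of Lemma~\ref{inteq}, applied now to the whole arrow rather than a single fiber. Passing to $\Sigma$-quotients and using that $\B$ commutes with colimits (\cite{Lu3}) then reassembles the coproduct into $\mor\bigl(\Exit(\Ranu(\R^n))\bigr)$, since by Observation~\ref{little exit} a morphism of $\Exit(\Ranu(\R^n))$ is exactly an embedding $\cylr(S_1\twoheadrightarrow S_0)\hra \R^n\times\Delta^1$ of the reverse cylinder of a surjection.

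The third step is bookkeeping: check that the whole equivalence is the one \emph{induced by the forgetful functor} $\Exit(\Ranu(\un{\R}^n))\to\Exit(\Ranu(\R^n))$ of Observation~\ref{forget}, by tracing a morphism $\un E$ to its $\R^n$-level component $\cylr(\sigma_n)\xhra{E_n}\R^n\times\Delta^1$ and verifying this is compatible with pruning (pruning only discards empty coordinates, which the forgetful functor to $\Fin^\op$ cannot see) and with the $\Sigma$-quotients. I expect the main obstacle to be Step~2, specifically making rigorous the claim that the refined arrow exit-path $\infty$-category has the homotopy type of the embedding space: unlike the $p=0$ case, we cannot cite Theorem~A of \cite{AH} verbatim, so the argument must either be reduced to the object case by a cylinder/path-object manipulation (observing that a path of refinements of $\R^n$ witnessing anti-collision deformation-retracts, via the straight-line homotopy, onto its endpoints, so the arrow category is ``as connected as'' its source) or proved directly by adapting the fiber-contractibility computation of Lemma~\ref{inteq} to vary the endpoints as well. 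The degeneracy/structure-map compatibility needed later for Lemma~\ref{class2} should be recorded here but does not enter the proof of this lemma.
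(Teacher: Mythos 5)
You take a genuinely different route from the paper, and the route has a significant gap at its center. The paper's proof is a fibration argument: it sets up the commuting square (\ref{hey}) in which both $\B\Fun^{W_n}\bigl([1],\Exit(\Ranu(\un{\R}^n))\bigr)$ and $\mor\bigl(\Exit(\Ranu(\R^n))\bigr)$ fiber over the corresponding $p=0$ spaces via $\ev_0$; it supplies the base-space equivalence from Lemma~\ref{pis0} and the fiber equivalence from three further lemmas --- Lemma~\ref{fiber2} computes the fiber of $\ev_0$ over $S$ as the product $\prod_{s\in S}\Exit\bigl(\Ranu(T_s\R^n)\bigr)^{\sim}$ by a box-and-straight-line homotopy, Lemma~\ref{fiber1pt1} identifies the classifying space of the fiber of $\ev_0$ in the refined case with the same product, and Lemma~\ref{fiber1pt2} invokes Quillen's Theorem~B to pass from the fiber of $\B\ev_0$ to $\B$ of the fiber --- then closes with the long exact sequence in homotopy and Whitehead.

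Your Step~1 (the arrow-level pruning adjunction) is correct; the paper proves exactly this as Lemma~\ref{extadj}, deployed there for Corollary~\ref{locexit}. The gap is Step~2. You want to deduce that the classifying space of the healthy-arrows category is $\mor\bigl(\Exit(\Ranu(\R^n))\bigr)$ by re-running Corollary~\ref{four}, but that corollary rests on Theorem~A of \cite{AH}, which is a statement about objects (fixed configurations), not about arrows. You flag this as ``the main obstacle'' and sketch two possible escape routes, but you do not carry either out, so the proposal postpones rather than resolves the hard part of the lemma. The paper's fibration over $\ev_0$ exists precisely to avoid needing an arrow analogue of Theorem~A: by fixing the source, it reduces the comparison of morphism spaces to a product of local computations in tangent spaces, and the box and straight-line arguments of Lemmas~\ref{fiber2} and \ref{fiber1pt1} are the rigorous version of the ``deformation-retract onto endpoints'' heuristic you gesture at. An additional dividend: the Quillen's-Theorem-B statement established for $\ev_0$ is reused directly in the proof of Lemma~\ref{class2} (the Segal condition), so the fibration approach is not a one-off workaround.

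There is also a confusion in Step~2 worth noting. After pruning, the source and target of an arrow are healthy trees, but the underlying map of finite sets $T_n\to S_n$ need not be a surjection --- that would be forced only for $\Exit(\Ran(\R^n))$, not for $\Exit(\Ranu(\R^n))$. Your appeal to Observation~\ref{little exit} to describe a morphism of $\Exit(\Ranu(\R^n))$ as an embedding of the reverse cylinder of a \emph{surjection} is in fact a description of morphisms of the non-unital Ran space; indexing your coproduct over surjective $\un\sigma$ would omit precisely the morphisms of the unital Ran space that witness ``disappearances'' of points.
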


The core idea of the proof is the following: 
Both spaces of the equivalence in Lemma~\ref{pis1} naturally assemble 
as the fibrations depicted in (\ref{hey}) below. 
We use the induced long exact sequence of homotopy groups 
to show a weak homotopy equivalence of total spaces by showing 
a homotopy equivalence between the base spaces and between the fibers. 
Then by the Whitehead Theorem we obtain a homotopy equivalence 
of the total spaces since they are CW complexes.
\begin{equation} \label{hey}  
\begin{tikzcd} 
\B\Fun^{W_n}\Bigl([1], \Exit\bigl(\Ranu(\un{\R}^n)\bigr)\Bigr) 
\ar{r}{\tx{frgt}} \ar{d}[swap]{\B\tx{ev}_0} 
& \mor\Bigl(\Exit\bigl(\Ranu(\R^n)\bigr)\Bigr) \ar{d}{\tx{ev}_0} \\
\B\Fun^{W_n}\Bigl([0], \Exit\bigl(\Ranu(\un{\R}^n)\bigr)\Bigr) \ar{r} 
& \Exit\bigl(\Ranu(\R^n)\bigr)^{\sim}
\end{tikzcd} 
\end{equation}

Since we already showed that the base spaces are equivalent in Lemma~\ref{pis0}, 
our work lies in showing an equivalence on the level of fibers. 
We identify the fibers throughout the course of three lemmas: 
Lemma~\ref{fiber2} identifies the fibers of $\ev_0$ 
and lemmas~\ref{fiber1pt1} and \ref{fiber1pt2} identify the fibers of $\B \ev_0$. 
Each lemma is technical, relying on the concept of a \emph{Cartesian fibration}. 
In the next subsection, we follow \cite{AF2} to recall this technical machinery 
and further tailor it to the situation at hand, towards the goal of proving
Lemma~\ref{fiber2}, Lemma~\ref{fiber1pt1}, and Lemma~\ref{fiber1pt2}.

\subsubsection{Cartesian fibrations}
\begin{definition}[Def. 2.1, \cite{AF2}] 
Let $\E \xra{\pi} \B$ be a functor between $\infty$-categories. 
A morphism $c_1 \xra{\la e \xra{\varphi} e' \ran} \E$ is $\pi$-\emph{Cartesian} 
if the diagram of $\infty$-overcategories
\[ \begin{tikzcd}
\E_{/e} \ar{r}{\varphi \circ -} \ar{d}[swap]{\pi} & \E_{/e'} \ar{d}{\pi} \\
\B_{/\pi(e)} \ar{r}{\pi(\varphi) \circ -} & \B_{/\pi(e')}
\end{tikzcd} \]
is a pullback. 

$\pi$ is a \emph{Cartesian fibration} if for every solid square
\[ \begin{tikzcd}
\ast \ar{r} \ar{d}[swap]{\la t \ran} & \E \ar{d}{\pi} \\
c_1 \ar[dashrightarrow]{ur} \ar{r} & \B 
\end{tikzcd} \]
there is a $\pi$-Cartesian filler. 
\end{definition}

\begin{observation} \label{Cart} 
The map 
$$\ds \Fun^{W_n}\Bigl([1], \Exit\bigl(\Ranu(\un{\R}^n)\bigr)\Bigr) 
\xra{\ev_0} \Fun^{W_n}\Bigl([0], \Exit\bigl(\Ranu(\un{\R}^n)\bigr)\Bigr)$$ 
is a Cartesian fibraton. 
The proof is straightforward, using Example 2.5 in \cite{AF2}, 
wherein it is shown that for an $\infty$-category $\C$, 
the functor given by evaluation at $0$, $\Fun\bl([1], \C\br) \xra{\ev_0} \Fun\bl([0], \C\br)$
 is a Cartesian fibration.
 \end{observation}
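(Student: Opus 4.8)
\textbf{Proof proposal for Observation~\ref{Cart}.}

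The plan is to reduce the statement to the known fact, cited from Example~2.5 of \cite{AF2}, that for any $\infty$-category $\C$ the evaluation functor $\Fun\bl([1],\C\br) \xra{\ev_0} \Fun\bl([0],\C\br)$ is a Cartesian fibration. Write $\C := \Exit\bigl(\Ranu(\un{\R}^n)\bigr)$ and $W := W_n$ for brevity. Recall from Definition~\ref{funW} that $\Fun^{W}\bl([p],\C\br)$ is the pullback of $\Fun\bl([p],\C\br) \to \Fun\bl([p]^\sim,\C\br) \hookleftarrow \Fun\bl([p]^\sim,W\br)$. Since $[0]^\sim = [0]$ and $[1]^\sim$ is the two-point discrete groupoid, we have $\Fun\bl([1]^\sim,\C\br) \simeq \C \times \C$ and $\Fun\bl([1]^\sim, W\br) \simeq W \times W$, while $\Fun\bl([0]^\sim,\C\br) \simeq \C$ and $\Fun\bl([0]^\sim,W\br)\simeq W$. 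Evaluation at $0$ is compatible with all of these structure maps, so the square
\[ \begin{tikzcd}
\Fun^W\bl([1],\C\br) \ar{r}{\ev_0} \ar{d} & \Fun^W\bl([0],\C\br) \ar{d} \\
\Fun\bl([1],\C\br) \ar{r}{\ev_0} & \Fun\bl([0],\C\br)
\end{tikzcd} \]
is itself a pullback of $\infty$-categories: one checks this by contemplating the cube whose back face is the above square, whose front face is $\Fun\bl([1]^\sim,W\br)\to\Fun\bl([0]^\sim,W\br)$ mapping into $\Fun\bl([1]^\sim,\C\br)\to\Fun\bl([0]^\sim,\C\br)$ via $\ev_0$, and using that pullbacks commute with pullbacks.

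The key step is then the standard fact that Cartesian fibrations are stable under base change: if $\E \xra{\pi} \B$ is a Cartesian fibration and $\B' \to \B$ is any functor, then the projection $\E \times_{\B} \B' \to \B'$ is a Cartesian fibration, and a morphism in the fiber product is $\pi'$-Cartesian if and only if its image in $\E$ is $\pi$-Cartesian. Applying this with $\pi = \bigl(\Fun\bl([1],\C\br) \xra{\ev_0} \Fun\bl([0],\C\br)\bigr)$ and the base change along $\Fun^W\bl([0],\C\br) \to \Fun\bl([0],\C\br)$, the pullback square above exhibits $\Fun^W\bl([1],\C\br) \xra{\ev_0} \Fun^W\bl([0],\C\br)$ as the base change of a Cartesian fibration, hence a Cartesian fibration. (One should be a touch careful that the pullback of $\infty$-categories along $\ev_0$ agrees with the homotopy pullback, which holds here since $\ev_0$ is an isofibration, being a Cartesian fibration.)

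I do not expect a serious obstacle: the only thing requiring attention is verifying that the displayed square really is a pullback, i.e.\ organizing the cube of pullbacks defining $\Fun^W\bl([p],\C\br)$ so that the outer rectangle decomposes correctly, and noting that $\ev_0 \colon [0] \to [1]$ restricts to $\ev_0 \colon [0]^\sim \to [1]^\sim$ on maximal sub-$\infty$-groupoids so that the comparison maps are genuinely compatible. Once that bookkeeping is in place, the result follows formally from stability of Cartesian fibrations under base change, exactly as the text's remark ("The proof is straightforward, using Example~2.5 in \cite{AF2}") indicates.
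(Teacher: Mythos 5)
The claimed pullback square is not a pullback, and this is a genuine gap. Unwinding Definition~\ref{funW} and using $\Fun^{W_n}\bl([0],\C\br)\simeq W_n$ (Observation~\ref{one}), the pullback $\Fun\bl([1],\C\br)\times_{\Fun([0],\C)}\Fun^{W_n}\bl([0],\C\br)$ is $\Fun\bl([1],\C\br)\times_\C W_n$ along $\ev_0$. Its morphisms are commutative squares in $\C$ whose \emph{top} horizontal arrow (the $\ev_0$ leg) lies in $W_n$, with no constraint on the bottom. But a morphism of $\Fun^{W_n}\bl([1],\C\br)$ requires \emph{both} horizontals to lie in $W_n$ — this is exactly what Observation~\ref{one} in the paper says. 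So $\Fun^{W_n}\bl([1],\C\br)$ is a proper wide $\infty$-subcategory of the proposed pullback whenever $W_n\neq\C$, and the square you display is not Cartesian. The cube you invoke does not rescue this: its "front face" $W_n\times W_n\to W_n$ over $\C\times\C\to\C$ (both maps $\ev_0$, i.e.\ first projections) is not a pullback either.

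The conclusion is still correct, but the argument needs one more step beyond base change. Base change of $\ev_0\colon\Fun\bl([1],\C\br)\to\C$ along $W_n\hookrightarrow\C$ does give a Cartesian fibration $\Fun\bl([1],\C\br)\times_\C W_n\to W_n$, and $\Fun^{W_n}\bl([1],\C\br)$ sits inside this as a wide subcategory. One then checks directly that the Cartesian structure restricts: the Cartesian lift of $w\colon a'\to a$ in $W_n$ with target $\alpha\colon a\to b$ is the square with top arrow $w$, bottom arrow $\tx{id}_b$, and left arrow $\alpha\circ w$. Its bottom arrow $\tx{id}_b$ lies in $W_n$ because $W_n$ contains the maximal sub-$\infty$-groupoid $\C^\sim$, so this lift lands in $\Fun^{W_n}\bl([1],\C\br)$. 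Moreover, the universal filler supplied by the Cartesian property in the larger category keeps the bottom horizontal arrow unchanged, so if the test morphism has bottom arrow in $W_n$ then so does the filler; hence the lift remains Cartesian after restricting to the subcategory. This extra verification — that the identity leg of the standard lift lies in $W_n$ and that fillers preserve the lower leg — is what the paper's phrase "straightforward" is compressing, and it is what your base-change-of-pullback argument skips over.
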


\begin{observation} 
Let $\mathcal{E} \xra{\pi} \mathcal{B}$ be a Cartesian fibration. 
For each object $ b \in \B$ there is a canonical inclusion 
$\pi^{-1}(b) \hookrightarrow \E^{b/} $ from the fiber of $\pi$ over $b$ 
to the undercategory of $\E$ under $b$. Its value on an object $e \in \pi^{-1}(b)$ 
such that $b \cong \pi(e)$ is the equivalence $b \xra{\cong} \pi(e)$. 
Its value on a morphism $e \xra{f} e'$ is $\pi(f)$. 
\end{observation}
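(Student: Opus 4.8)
The plan is to build the functor directly from the model-independent descriptions of the fiber and of the comma $\infty$-category used in \cite{AF2}, and then to verify that it is an inclusion (a monomorphism of $\infty$-categories, i.e.\ fully faithful and injective on equivalence classes of objects) by exhibiting it as a base change of an evident monomorphism. First I would recall the two constructions. The fiber is the pullback $\pi^{-1}(b):=\{b\}\times_{\B}\E$, taken along the functor $\langle b\rangle\colon\{b\}\to\B$ and along $\pi$; an object of it is the data of an object $e\in\E$ together with an equivalence $\phi\colon b\xra{\simeq}\pi(e)$ in $\B$, and a morphism is a morphism $f\colon e\to e'$ of $\E$ together with a filler of the triangle $\pi(f)\circ\phi\simeq\phi'$. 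The undercategory is the pullback $\E^{b/}:=\B^{b/}\times_{\B}\E$, where $\B^{b/}:=\{b\}\times_{\B}\Fun([1],\B)$ is formed along source-evaluation and the right-hand leg $\B^{b/}\to\B$ is target-evaluation; an object of $\E^{b/}$ is thus a pair $(e,\alpha\colon b\to\pi(e))$ with $\alpha$ now an arbitrary morphism of $\B$, and a morphism is a morphism $f\colon e\to e'$ of $\E$ with a filler of $\pi(f)\circ\alpha\simeq\alpha'$.

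The functor is then produced as follows. The constant functor $\langle\mathrm{id}_b\rangle\colon\{b\}\to\B^{b/}$ at the object $\mathrm{id}_b$ has the property that its composite with the target-evaluation $\B^{b/}\to\B$ is $\langle b\rangle$. Base-changing $\pi$ along this functor and using the pasting law for pullbacks yields a canonical equivalence
\[
\{b\}\underset{\B^{b/}}{\times}\E^{b/}\;=\;\{b\}\underset{\B^{b/}}{\times}\bigl(\B^{b/}\underset{\B}{\times}\E\bigr)\;\simeq\;\{b\}\underset{\B}{\times}\E\;=\;\pi^{-1}(b),
\]
so the projection $\{b\}\times_{\B^{b/}}\E^{b/}\to\E^{b/}$ is the desired functor $\pi^{-1}(b)\hookrightarrow\E^{b/}$. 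Unwinding this along the identifications above, the functor carries an object $e$ (with its structural equivalence $\phi\colon b\xra{\simeq}\pi(e)$) to the object of $\E^{b/}$ given by $\phi$ regarded merely as a morphism of $\B$, and it carries a morphism $f\colon e\to e'$ to the morphism of $\E^{b/}$ whose underlying arrow of $\E$ is $f$ (equivalently, whose image under $\pi$ is $\pi(f)$) together with the inherited filler --- which is exactly the content of the last two sentences of the statement.

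Finally I would check that the functor is an inclusion. It suffices to observe that $\langle\mathrm{id}_b\rangle\colon\{b\}\to\B^{b/}$ is a monomorphism: it is fully faithful because $\Hom_{\B^{b/}}(\mathrm{id}_b,\mathrm{id}_b)$ is contractible (a self-morphism of $\mathrm{id}_b$ in $\B^{b/}$ is a morphism $g\colon b\to b$ equipped with a homotopy $g\simeq\mathrm{id}_b$, so this mapping space is the based path space of $\Hom_{\B}(b,b)$ at $\mathrm{id}_b$), and it is trivially injective on equivalence classes of objects. Since monomorphisms of $\infty$-categories are stable under base change, and our functor is the base change of $\langle\mathrm{id}_b\rangle$ along $\E^{b/}\to\B^{b/}$, it too is a monomorphism. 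I expect no serious obstacle here; the only point demanding care is keeping the source/target conventions in the definition of $\B^{b/}$ straight so that the pasting-law rewriting holds on the nose. I would also note that the Cartesian-fibration hypothesis is not used in this argument --- it is retained only because Cartesian fibrations are the standing setting of the subsection, and for such $\pi$ one anticipates that this inclusion carries extra structure (for instance a left adjoint assembled from Cartesian lifts) that will be invoked in the lemmas to follow.
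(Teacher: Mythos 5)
Your argument is correct and gives a clean, model-independent verification of what the paper records without proof as an Observation. You correctly model $\E^{b/}$ as the comma pullback $\B^{b/}\times_{\B}\E$, exhibit the functor $\pi^{-1}(b)\to\E^{b/}$ as the base change of $\langle\mathrm{id}_b\rangle\colon\{b\}\to\B^{b/}$ along target-evaluation $\E^{b/}\to\B^{b/}$, and identify its source with $\pi^{-1}(b)$ by the pasting law; monicity then follows because $\langle\mathrm{id}_b\rangle$ is the inclusion of the (initial) object $\mathrm{id}_b$, its endomorphism space is the based path space of $\Hom_{\B}(b,b)$, and monomorphisms of $\infty$-categories are stable under base change. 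Your side remark is also right: the Cartesian-fibration hypothesis plays no role in producing this inclusion, and in the paper it is invoked only afterwards (via Lemma~2.20 of \cite{AF2}) to obtain the right adjoint $\mu$ appearing in the definition of the Cartesian monodromy functor. The single point to keep explicit, which you flag yourself, is the orientation convention on $\B^{b/}$ (pullback along source-evaluation, projection by target-evaluation) so that the composite $\{b\}\to\B^{b/}\to\B$ is literally $\langle b\rangle$ and the pasting identification lands on $\pi^{-1}(b)$ rather than on a comma object in the wrong variance.
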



\begin{definition}[Cartesian Monodromy Functor] 
Let $\mathcal{E} \xra{\pi} \mathcal{B}$ be a Cartesian fibration. 
For each morphism $b \xra{f} b'$ in $\mathcal{B}$, 
the induced \emph{Cartesian monodromy functor} $f^\ast : \pi^{-1}(b')\ra \pi^{-1}(b)$  
from the fiber over $b'$ to the fiber over $b$   is defined to be the threefold composite
\[ \begin{tikzcd}
\pi^{-1}(b')\ar[dashrightarrow]{r}{f^\ast} \ar[hookrightarrow]{d} & \pi^{-1}(b)  \\
\E^{b'/} \ar{r}{- \circ f} & \E^{b/} \ar{u}[swap]{\mu}
\end{tikzcd} \]
where $\mu$ is right adjoint to the inclusion functor 
$ \ds \pi^{-1}(b) \hookrightarrow \E^{b/}$, 
the existence of which is guaranteed by Lemma~2.20 in \cite{AF2}.  
\end{definition}

\begin{observation} \label{mono} 
Given a diagram of $\infty$-categories 
\begin{equation} \label{EB} 
\begin{tikzcd}
\E \ar{r}{G} \ar{d}[swap]{\pi} & \E' \ar{d}{\pi'} \\
\B \ar{r}{F} & \B' 
\end{tikzcd} \end{equation}
in which $\pi$ and $\pi'$ are Cartesian fibrations, 
for each morphism $b \xra{\alpha} b'$ in $\B$, 
$G$ carries the induced monodromy functor $\alpha^\ast$ 
to the monodromy functor induced by $F(\alpha)$, 
\begin{equation} \label{EB1} \begin{tikzcd}
\pi^{-1}(b') \ar{d} \ar{r}{\alpha^\ast} & \pi^{-1}(b) \ar{d} \\
\pi'^{-1}(F(b')) \ar{r}{F(\alpha)^\ast} & \pi'^{-1}(F(b)).
\end{tikzcd} \end{equation}
Moreover, if (\ref{EB}) is a pullback of $\infty$-categories, 
then the downward vertical arrows of (\ref{EB1}) are equivalences.
\end{observation}

\subsubsection{The fibers of $\tx{ev}_0$}
We are now equipped to identify the fibers of the evaluation functor 
on the righthand side of (\ref{hey}) with the following lemma.
First take note that the following notational changes are applied for the remainder of the paper.
\begin{itemize}
\item In $\Exit\bigl(\Ranu(\R^n)\bigr)$ we denote an object $S \xhra{e} \R^n$ by $S$, 
by which we mean the image of $S$ in $\R^n$ under the embedding $e$.
\item  In $\Exit\bigl(\Ranu(\un{\R}^n)\bigr)$ we denote an object 
$\un{S} \xhra{\un{e}} \un{\R}^n$ by $\un{S} = S_n \ra \cdots \ra S_1$, 
by which we mean the images of $S_i$ under $e_i$ for each $1 \leq i \leq n$ 
together with the coordinate projection data given by the sequence of 
maps of finite sets $S_n \ra \cdots \ra S_1$.  

We denote a morphism $\cylr(\un{S}' \xra{\un{\sigma}} \un{S} ) 
\xhra{\un{E}} \un{\R}^n \times \Delta^1$ from $\un{S}$ to $\un{S}'$ 
by simply an arrow $\un{S} \ra \un{S}'$. 
\end{itemize} 

\begin{lemma} \label{fiber2} 
For a smooth connected manifold $M$, the fiber of the map of spaces
$$\ds \mor\Bigl(\Exit\bigl(\Ranu(M)\bigr)\Bigr) \xra{\tx{ev}_0} \Exit\bigl(\Ranu(M)\bigr)^{\sim}$$ 
from the space of morphisms of $\Exit\bigl(\Ranu(M)\bigr)$ 
to the maximal $\infty$-subgroupoid of $\Exit\bigl(\Ranu(M)\bigr)$ 
over an object $S \subset M$ of $\Exit\bigl(\Ranu(M)\bigr)$ is the product 
$$\ds \prod_{s\in S} \Exit\bl(\Ranu(T_sM)\br)^{\sim}$$
indexed over $s \in S$ of the maximal $\infty$-subgroupoid 
of the exit-path $\infty$-category of the unital Ran space of 
the tangent space $T_sM$ of $M$ at $s \in S$. 
\end{lemma}

\begin{proof}  Fix $S \subset M$. 
We want to identify the fiber over $S$ by constructing an explicit homotopy equivalence
$$ \prod_{s \in S} \coprod_{r \geq 0} \Conf_r(T_sM)_{\Sigma_r} 
\overset{\simeq}\to \ev_0^{-1}(S).  $$
We remind the reader that the underlying maximal $\infty$-subgroupoid of 
$\Exit\bl(\Ranu(T_sM)\br)$ is equivalent to the coproduct of configuration spaces
$$\ds \Exit\bl(\Ranu(T_sM)\br)^\sim \simeq \coprod_{r \geq 0} \Conf_r(T_sM)_{\Sigma_r}.$$
 For each $s \in S$, choose a smooth open embedding $T_sM \hra M$ 
 which carries the origin to $s$ such that the images $\{U_s\}_{s\in S}$ are disjoint. 
 These open embeddings induce homeomorphisms 
 $\Conf_r(T_sM)_{\Sigma_r} \cong \Conf_r(U_s)_{\Sigma_r}$, 
 which in turn identify homeomorphisms
\begin{equation} \label{first1} 
\ds \prod_{s \in S} \coprod_{r \geq 0} 
\Conf_r(T_sM)_{\Sigma_r} \cong \prod_{s \in S}\coprod_{r \geq 0} \Conf_r(U_s)_{\Sigma_r} 
\cong \coprod_{r\geq 0}\Conf_r \Big(\coprod_{s\in S}U_s \Big)_{\Sigma_r}
\end{equation}
where the second homeomorphism is canonical because the images 
$\{U_s\}$ are disjoint and the empty configuration is allowed. 
The inclusions $\ds \coprod_{s \in S} U_s \hra M$ determine an inclusion
\begin{equation} \label{second} 
\ds \coprod_{r \geq 0} \Conf_r\Big(\coprod_{s \in S} U_s\Big)_{\Sigma_r} 
\hookrightarrow \coprod_{r \geq 0} \Conf_r(M)_{\Sigma_r}. 
\end{equation}
Composing (\ref{first1}) and (\ref{second}) results in the top horizontal map in the following diagram
\begin{equation} 
\begin{tikzcd} \label{third} \ds
\prod_{s \in S} \coprod_{r \geq 0} \Conf_r(T_sM)_{\Sigma_r} 
\ar[hookrightarrow]{r} \ar[dashrightarrow]{dr}[swap]{{\sf straight}} 
& \ds \coprod_{r \geq 0} \Conf_r(M)_{\Sigma_r} \\
& \mor \Exit\bl(\Ranu(M)\br). \ar[dashrightarrow]{u}[swap]{\sf target} 
\end{tikzcd} 
\end{equation}
This injection factors through the space of morphisms of $\Exit\bl(\Ranu(M)\br)$ 
by taking the straight-line paths from the origin of each $T_sM$ 
to the given configuration in the image $U_s$.
Let us define the factorization ${\sf straight}$ explicitly. 
Consider a configuration $(R_s \subset T_sM)_{s \in S}$. 
For each $s \in S$ define 
$$\gamma_s: \bar{C}(R_s) \to T_sM \times \Delta^1$$
 over $\Delta^1$ from the closed cone on the underlying set $R_s$ 
 to be the map which carries the cone point to the origin of $T_sM$ at $t=0 \in \Delta^1$ 
 and which carries each $\{r\} \times \Delta^1$ to the straight-line path from $0$ to $r \in R_s$. 
Define the map ${\sf straight}$ in (\ref{third}) to assign to each 
configuration $(R_s \subset T_sM)_{s \in S}$ the morphism in $\Exit\bl(\Ranu(M)\br)$ 
determined by the composites 
$$ \bar{C}(R_s) \xra{\gamma_s} T_sM \times \Delta^1 \simeq U_s \times \Delta^1 
\hookrightarrow M \times \Delta^1$$
for each $s \in S$. 

By construction, the functor ${\sf straight}$ in (\ref{third}) factors through the fiber $\ev_0^{-1}(S)$
\[\ds 
\begin{tikzcd}
\ds \prod_{s \in S} \coprod_{r \geq 0} \Conf_r(T_sM)_{\Sigma_r}  \ar{r}{{\sf straight}} 
\ar[dashrightarrow]{dr}[swap]{{\sf Str}}& \mor \Exit\bl(\Ranu(M)\br) \\
& \ar[hookrightarrow]{u} \ev_0^{-1}(S).
\end{tikzcd} \]
We will show that the factorization ${\sf Str}$
 is a homotopy equivalence by showing that for any commutative square
\begin{equation} \label{fourth}
\begin{tikzcd}
\mathbb{S}^n \ar{r} \ar[hookrightarrow]{d} & \ds \prod_{s \in S} \coprod_{r \geq 0} 
\Conf_r(T_sM)_{\Sigma_r} \ar{d}{\sf Str} \\
\mathbb{D}^{n+1} \ar[dashrightarrow]{ur}{\exists} \ar{r} & \ev_0^{-1}(S) 
\end{tikzcd}
 \end{equation}
there exists a filler making each triangle commute up to homotopy. 
Recall that a point in $\ev_0^{-1}(S) \subset \mor \Exit\bl(\Ranu(M)\br)$ 
is the data of a finite proper constructible bundle 
$$\cylr(T \to S) \lra \Delta^1$$
 together with an embedding 
 $$\cylr(T \to S) \hra M \times \Delta^1$$
  over $\Delta^1$ for some finite set $T$ and some map $T \to S$. 
  (Explicitly, such a description is due to the equivalence $\Exit\bl(\Ranu(M)\br) 
  \simeq \RRef^0(M)$ of Lemma~\ref{l1}.) 
  Heuristically a map $\mathbb{D}^{n+1} \to \ev_0^{-1}(S)$ 
  is a movie of embeddings $\cylr(T \to S) \hra M \times \Delta^1$ 
  parameterized by $\mathbb{D}^{n+1}$. 
  Explicitly, this means that a map $\mathbb{D}^{n+1}  \to \ev_0^{-1}(S)$ 
  is the data of a finite proper constructible bundle 
$$ \tilde{K} \lra \mathbb{D}^{n+1} \times \Delta^1$$
together with an embedding
\[ \begin{tikzcd}
\tilde{K} \ar[hookrightarrow]{r} \ar{dr} & \mathbb{D}^{n+1} \times \Delta^1 \times M \ar{d} \\
& \mathbb{D}^{n+1} \times \Delta^1
\end{tikzcd} \]
that is constant at $S$ over $\mathbb{D}^{n+1} \times \Delta^{\{0\}}$.
For ease of notation, identify $\Delta^1 \cong [0,1]$ such that $\Delta^{\{0\}} = 0$. 
Compactness of $\mathbb{D}^{n+1}$ ensures that there exists $t \in (0,1]$ 
such that $\tilde{K}|_{[0,t]}$ factors through $\ds \coprod_{s \in S} U_s \subset M$. 
Further, taking the restriction of $\tilde{K}|_{[0,t]}$ along 
$\mathbb{S}^n \hookrightarrow \mathbb{D}^{n+1}$ 
and scaling by this $t$ extends to a commutative square (\ref{fourth}) 
for the case that $M$ is a finite disjoint union of Euclidean spaces. 
By virtue of the coproduct, we may reduce yet further to the case of a single 
Euclidean space; i.e., the case that $S = \{s\}$ is a singleton. 
Without loss of generality, choose $s$ to be the origin. 
So, showing that the map given by taking straight-line paths from the origin
$$ \ds \prod_{s \in S} \coprod_{r \geq 0} \Conf_r(T_sM)_{\Sigma_r} \xra{\sf Str} \ev_0^{-1}(S)$$
is a homotopy equivalence has been reduced to showing that the same map
\begin{equation} \label{fifth} 
\ds \coprod_{r \geq 0} \Conf_r(\R^n)_{\Sigma_r} \xra{\sf Str}  \ev_0^{-1}(\{0\}) \end{equation}
in the case of $M =\R^n$ and $S =\{0\}$ is a homotopy equivalence. 
In fact, we will explicitly describe such a homotopy equivalence.
A map in the other direction
\begin{equation} \label{sixth}
\ev_0^{-1}(\{0\}) \lra \coprod_{r \geq 0} \Conf_r(\R^n)_{\Sigma_r} \end{equation}
is defined as follows.
A point in $\ev_0^{-1}(\{0\})$ is a conically smooth embedding over $[0,1]$
\begin{equation} \label{seventh}  \bar{C}(I) \hra \R^n \times [0,1] \end{equation}
from the closed cone of some finite set $I \in \Fin$ that carries the cone point to the origin. 
By conical smoothness, the blowup of such an embedding at the cone point is a smooth map 
$$ {\sf Bl}(\bar{C}(I)) = I \times [0,1] \lra \R^n$$
which is an embedding over $(0,1]$ and which carries each $\{i\} \times [0,1]$ 
to a smooth path $\gamma_i$ in $\R^n$ from the origin, with non-zero derivative at the origin. 
The collection of these derivatives determine the map (\ref{sixth}). 
Explicitly, the point (\ref{seventh}) is sent to the collection of 
non-zero initial velocity vectors $(d\gamma_i(0) \subset \R^n)_{i \in I}$ 
associated to it; note that the image names a point in $\Conf_{|I|}(\R^n)_{\Sigma_r}$.

To see that (\ref{fifth}) and (\ref{sixth}) yield a homotopy equivalence, 
first note that the composite of (\ref{fifth}) followed by (\ref{sixth}) 
is the identity on $\ds \coprod_{r \geq 0} \Conf_r(\R^n)_{\Sigma_r}$. 
We describe a homotopy from the identity on $\ev_0^{-1}(S)$ 
to the other composite as follows. Let $\bar{C}(I) \xra{\gamma} \R^n \times \Delta^1$ 
be a point in $\ev_0^{-1}(\{0\})$. By virtue of approximating smooth functions 
by piecewise linear functions, for each $i\in I$, there is a $t \in (0,1]$ 
such that the $i^{\tx{th}}$ path in $\gamma$ projects orthogonally 
onto the straight ray from the origin in the direction of the initial velocity vector of that path. 
Choose the minimum such $t$ over all $i \in I$. 
Straight-line homotopy from each path in $\gamma$ restricted to $[0,t]$ 
to the straight ray given by path's initial velocity vector yields the desired homotopy 
from the identity on $\ev_0^{-1}(\{0\})$ to the composite of (\ref{sixth}) followed by (\ref{fifth}). 
\end{proof}

Applying Lemma~\ref{fiber2} to the case of $M=\R^n$, 
we have identified the fiber of $\ev_0$ in (\ref{hey}) as desired. 
The next step is to identify the fiber of the lefthand functor 
$\B\ev_0$ in (\ref{hey}) as equivalent to that of the righthand functor. 
The next lemma is the first of two steps towards this.
It identifies the classifying space of the fiber of the map from 
${\sf Fun}^{W_n}\Bigl([1],\Exit\bigl(\Ranu(\un{\R}^n)\bigr)\Bigr)$ 
to $ {\sf Fun}^{W_n}\Bigl([0],\Exit\bigl(\Ranu(\un{\R}^n)\bigr)\Bigr)$ given by evaluation at $0$.

\begin{lemma} \label{fiber1pt1} 
The classifying space of the fiber of the functor 
$$\ds {\sf Fun}^{W_n}\Bigl([1],\Exit\bigl(\Ranu(\un{\R}^n)\bigr)\Bigr) 
\xra{ \tx{ev}_0} {\sf Fun}^{W_n}\Bigl([0],\Exit\bigl(\Ranu(\un{\R}^n)\bigr)\Bigr)$$ 
over an object $\un{S} := S_n \to S_{n-1} \to \cdots \to S_1$ is the product  space
$$\ds \prod_{s\in S_n} \Exit\bl(\Ranu(T_s\R^n)\br)^{\sim}$$ 
 indexed by $ S_n$ of the maximal $\infty$-subgroupoid of 
 the exit-path $\infty$-category of the unital Ran space 
 of the tangent space of $\R^n$ at $s \in S_n$.
 \end{lemma}
 
\begin{proof} 
Fix an object $\un{S} := (S_n \to S_{n-1} \to \cdots \to S_1) 
\in {\sf Fun}\Bigl([0],\Exit\bigl(\Ranu(\un{\R}^n)\bigr)\Bigr)$. 
Our procedure for identifying the fiber over $\un{S}$ has two steps. 
First recall the Fox-Neuwirth stratification of the unordered configuration space 
$\nu_{\Sigma_r}: \Conf_r(\R^n)_{\Sigma_r} \to nOrd(\un{r})_{\Sigma_r}$ (Obs~\ref{toe}). 
The first step is to show that there exists a refinement (Def. \ref{refinement}) of the stratified space
\begin{equation} \label{heyo} 
\prod \coprod \nu_{\Sigma_r}: \prod_{s \in S_n} \coprod_{r \geq 0}\Conf_r(T_s\R^n)_{\Sigma_r} 
\lra  \prod_{s \in S_n} \coprod_{r \geq 0} nOrd(\underline{r})_{\Sigma_r} 
\end{equation}
where the stratification is given by taking the product of the coproduct 
of the Fox-Neuwirth stratification of the configuration space of the tangent space of $\R^n$ at $s \in S_n$. 
(Note that products of stratified spaces emit a natural stratification (Ex.~2.1.7, \cite{AFT}).) 
Just like in the proof of Lemma~\ref{fiber2}, for each $s \in S_n$, 
choose a smooth open embedding 
\begin{equation} \label{kk} 
T_s\R^n \hra \R^n 
\end{equation}
 which carries the origin to $s$ such that the images $\{U_s\}_{s\in S_n}$ are disjoint.
The inclusions $\ds \coprod_{s \in S_n} U_s \hra \R^n$ 
determine an inclusion of the configuration spaces 
naming an inclusion of the substratified space 
$$
	 \Conf_r\Big(\coprod_{s \in S_n} U_s\Big)_{\Sigma_r} \hra
	 \Conf_r(\R^n)_{\Sigma_r} \overset{\nu_{\Sigma_r}}\lra
	  nOrd(\underline{r})_{\Sigma_r}
$$
consisting of configurations of $r$ unordered points in the images 
$\coprod_{s \in S_n} U_s \subset \R^n$ stratified by the Fox-Neuwirth cells. 
Taking the coproduct over cardinality determines the substratified space
\begin{equation} \label{peace}  
 \coprod_{r\geq 0} \Conf_r\left(\coprod_{s \in S_n} U_s\right)_{\Sigma_r} \hra
 \coprod_{r \geq 0} \Conf_r(\R^n)_{\Sigma_r} \overset{\coprod \nu_{\Sigma_r}}\lra
 \coprod_{r \geq 0} nOrd(\underline{r})_{\Sigma_r} 
\end{equation}
consisting of all finite subsets in the images $\ds \coprod_{s \in S_n}U_s \hra \R^n$ 
stratified by the Fox-Neuwirth cells. 

The open embeddings (\ref{kk}) induce homeomorphisms 
$\Conf_r(T_s\R^n)_{\Sigma_r} \xra{\cong} \Conf_r(U_s)_{\Sigma_r}$, 
which in turn identify the topological spaces
\begin{equation} \label{nuts} 
\ds \prod_{s \in S_n} \coprod_{r \geq 0} \Conf_r(T_s\R^n)_{\Sigma_r} 
\xra{\cong} \prod_{s \in S_n}\coprod_{r \geq 0} \Conf_r(U_s)_{\Sigma_r} 
\cong \coprod_{r\geq 0}\Conf_r \Big(\coprod_{s\in S_n}U_s \Big)_{\Sigma_r}
\end{equation}
where the last equivalence is canonical because the subsets $U_s$ 
are disjoint and we have the empty configuration. 
The composite of these homeomorphisms (\ref{nuts}) is a refinement  
of stratified spaces, as indicated in the following diagram:
\begin{equation} \label{zzp}  
\begin{tikzcd}
\coprod_{r\geq 0} \Conf_r\left(\coprod_{s \in S_n} U_s\right)_{\Sigma_r} \ar{r}{\sf refine} \ar{d} &
 \prod_{s \in S_n} \coprod_{r \geq 0} \Conf_r(T_s\R^n)_{\Sigma_r} \ar{d}{ \prod \coprod \nu_{\Sigma_r}} \\
\coprod_{r \geq 0} nOrd(\underline{r})_{\Sigma_r} \ar{r} &
 \prod_{s \in S_n} \coprod_{r \geq 0} nOrd(\underline{r})_{\Sigma_r} 
\end{tikzcd}
\end{equation}
Let us explain further. The lefthand downward arrow is the Fox-Neuwirth stratification 
$\nu_{\Sigma_r}$ restricted to configurations in the images $\coprod_{s \in S_n} U_s \subset \R^n$. 
This stratification remembers coordinate coincidence between any points, 
even those in distinct $U_s$ subsets. 
The righthand downward arrow is also determined by the Fox-Neuwirth stratification. 
However, it only remembers the coordinate coincidence between points in the same $U_s$ subset. 
Any coordinate coincidence between points in distinct $U_s$ subsets is ignored. 
Thus, this map of stratified spaces forgets information, 
meaning the map between posets is a many-to-one map. 
Therefore, since the top horizontal arrow is a homeomorphism, 
each stratum of the domain in (\ref{zzp}) is embedded into a stratum of the codomain, 
which verifies that this map is indeed a refinement, completing the first step of the proof.

Our second step is to show that there is an adjunction between 
the exit-path $\infty$-category of the stratified space 
$\ds \coprod_{r\geq 0} \Conf_r\left(\coprod_{s \in S_n} U_s\right)_{\Sigma_r}$ 
and the fiber $\ds \ev_0^{-1}(\un{S}) \subset \Fun^{W_n}\Bigl([1], \Exit\bl(\Ranu(\un{\R}^n)\br)\Bigr)$.
First we apply Ayala-Francis-Rozenblyum-Tanaka's exit-path $\infty$-category 
functor (Def. \ref{AFRexit}) to the inclusion of stratified spaces (\ref{peace}) 
to obtain the top horizontal functor in the following diagram
\begin{equation} 
\begin{tikzcd} \label{diligence} \ds
\coprod_{r\geq 0} \Exit\left(\Conf_r\left(\un{\coprod_{s \in S_n} U_s}\right)_{\Sigma_r}\right) 
\ar{r}{\Exit((\ref{peace}))} \ar[dashrightarrow]{dr}[swap]{\sf straight} 
& \ds \coprod_{r \geq 0} \Exit\bl(\Conf_r(\un{\R}^n)_{\Sigma_r} \br)\\
&\Fun^{W_n}\Bigl([1], \Exit\bl(\Ranu(\un{\R}^n)\br)\Bigr). \ar[dashrightarrow]{u}[swap]{\sf target} 
\end{tikzcd} 
\end{equation}
In particular, this functor factors through $ \ds \Fun^{W_n}\Bigl([1], \Exit\bl(\Ranu(\un{\R}^n)\br)\Bigr)$  
by taking the straight-line paths from each $s \in S_n$ 
(the same idea as that of (\ref{third}) in the proof of Lemma~\ref{fiber2}). 
We wish to define the value of ${\sf straight}$ on a $[p]$-point. 
We use the model of $W_n^\h(r)$ as a simplicial space 
(inherited from $\Exit\bigl(\Ran(\un{\R}^n)\bigr)$) to describe $[p]$-points 
in $\Exit\left(\Conf_r\left(\un{\coprod_{s \in S_n} U_s}\right)_{\Sigma_r}\right)$, 
taking advantage of the equivalence from Lemma~\ref{confW}. 
Also note that the $\infty$-category $ \Fun^{W_n}\Bigl([1], \Exit\bl(\Ranu(\un{\R}^n)\br)\Bigr)$ 
inherits a model as a simplicial space from that of $\Exit\bl(\Ranu(\un{\R}^n)\br)$. 
Recall from Observation~\ref{exit-conf} that a $[p]$-point of 
$\Exit\left(\Conf_r\left(\un{\coprod_{s \in S_n} U_s}\right)_{\Sigma_r}\right)$ is an embedding 
\begin{equation} \label{simp} 
R \times \Delta^p  \hra \coprod_{s \in S_n} U_s \times \Delta^p 
\end{equation}
over $\Delta^p$ for a set $R$ of cardinality $r$. 
Consider the natural decomposition $R = \coprod_{s \in S_n} R_s$ 
into the subsets of its image contained in each $U_s$.
The $[p]$-point (\ref{simp}) can be described as a collection of disjoint embeddings
\begin{equation} \label{simp2}
R_s \times \Delta^p \overset{E}\hra U_s \times \Delta^p
\end{equation}
one for each $s \in S_n$. 
The value of this $[p]$-point when $p=0$ is described by the factorization 
(\ref{third}) in the proof of Lemma~\ref{fiber2} using the cone construction, 
but the same concept applies for any $p$; let us make this explicit. 
For each $s \in S_n$ define 
\begin{equation} \label{simp3}
\bar{C}(R_s \times \Delta^p) \overset{E_s} \hra U_s \times \Delta^{p+1}
\end{equation}
over $\Delta^{p+1}$ from the closed cone of the product $R_s \times \Delta^p$ 
to be the map which carries the cone point to $s \in U_s$ at $t=0 \in \Delta^{p+1}$
 and which carries each $\{r\} \times \{t\} \times \Delta^1$, for $r \in R_s$ and $t \in \Delta^p$, 
 to the straight-line path from $s$ to the point $E_s(r,t)$. 
 The collection of these embeddings (\ref{simp3}) for $s \in S_n$ 
 determines a $[p+1]$-point in $\Exit\bl(\Ranu(\un{\R}^n)\br)$ 
 whose data at $t=0 \in \Delta^{p+1}$ is the object $\un{S}$. 
 Moreover, this $[p+1]$-point determines the value of (\ref{simp}) under the functor ${\sf straight}$; 
 let us explain further. 
 Recall from Observation~\ref{one} that a $[p]$-point in 
 $\Fun^{W_n}\Bigl([1], \Exit\bl(\Ranu(\un{\R}^n)\br)\Bigr)$ is a commutative diagram 
 in $\Exit\bl(\Ranu(\un{\R}^n)\br)$ of the shape $[p] \times [1]$ 
 such that the two $p$-simplicies $[p] \cong [p]\times \{0\}$ and $[p] \cong [p] \times \{1\}$ 
 must be $[p]$-points of the $\infty$-subcategory $W_n$. 
By taking the identity on $\un{S}$ the appropriate number of times, 
we can build the $[p+1]$-point (\ref{simp3}) of $\Exit\left(\un{\R}^n)\right)$ 
into a commutative diagram of the shape $[p] \times [1]$. All of the morphisms $\un{S} \to R$ 
given by straight-line paths in (\ref{simp}) comprise the ``$[p]$-thickened walking arrow'' of the diagram. 
Namely, all those arrows $\{i\} \times [1]$ for each $0 \leq i \leq p$. 
These are not necessarily in $W_n$. 
The arrows of $[p] \times \{0\}$ are all the identity on $\un{S}$, 
creating a $[p]$-point in $W_n$. 
Lastly, the $p$-simplex $[p] \times \{0\}$ is precisely the $[p]$-point (\ref{simp}) 
of $\Exit\left(\Conf_r\left(\un{\coprod_{s \in S_n} U_s}\right)_{\Sigma_r}\right)$. 
It, too, is in $W_n$ because $\Exit\bl(\Conf_r(\un{\R}^n)_{\Sigma_r} \br)$ 
embeds fully faithfully into $W_n$ by the equivalences Observation~\ref{two} and Lemma~\ref{confW}
$\coprod_{r \geq 0} \Exit\bl(\Conf_r(\un{\R}^n)_{\Sigma_r} \br) 
\simeq \coprod_{r \geq 0} W_n^\h(r) \simeq W_n^\h \hra W_n.$

By construction, the functor ${\sf straight}$ in (\ref{diligence}) 
factors through the fiber $\ev_0^{-1}(\un{S})$
\[\ds 
\begin{tikzcd}
\ds \coprod_{r\geq 0} \Exit\left(\Conf_r\left(\un{\coprod_{s \in S_n} U_s}\right)_{\Sigma_r}\right) 
 \ar{r}{\sf straight} \ar[dashrightarrow]{dr}[swap]{\sf Str} 
 & \Fun^{W_n}\Bigl([1], \Exit\bl(\Ranu(\un{\R}^n)\br)\Bigr) \\
& \ar[hookrightarrow]{u} \ev_0^{-1}(\un{S})
\end{tikzcd} 
\]
We claim that ${\sf Str}$ is a right adjoint functor. 
This follows from Lemma~\ref{adj}. 
Indeed, observe that in the following commutative diagram of $\infty$-categories
\begin{equation} \label{beach}
\begin{tikzcd}
\ds  \coprod_{r\geq 0} \Exit\left(\Conf_r\left(\un{\coprod_{s \in S_n} U_s}\right)_{\Sigma_r}\right) 
\ar{r}{\sf Str} \ar[hookrightarrow]{d}[swap]{\sf inclusion} 
& {\sf ev}_0^{-1}(\un{S}) \ar[hookrightarrow]{d}{\sf target} \\
W_n^\h \ar[hookrightarrow]{r}{\sf inclusion} & W_n
\end{tikzcd} 
\end{equation}
both vertical arrows are fully faithful. 
In Lemma~\ref{adj}, we showed that the bottom horizontal arrow in (\ref{beach}) is a right adjoint. 
Thus, it is that ${\sf Str}$ is a right adjoint functor, as desired.

To finish up the proof, note that Corollary~2.1.28 of \cite{AMG} 
states that the classifying spaces of adjoint $\infty$-categories are equivalent. 
Applying this result yields the equivalence 
\begin{equation}
\label{puff}
\B\ev_0^{-1}(\un{S}) \simeq \B \coprod_{r\geq 0} 
\Exit\left(\Conf_r\left(\un{\coprod_{s \in S_n} U_s}\right)_{\Sigma_r}\right). 
\end{equation}
Moreover, Corollary 1.2.7 of \cite{AFT} identifies the classifying space of a value 
under the $\Exit$ functor as its underlying topological space. 
Recall the refinement 
$$\coprod_{r\geq 0} \Conf_r\left(\coprod_{s \in S_n} U_s\right)_{\Sigma_r}  
\to	\prod_{s \in S_n} \coprod_{r \geq 0} \Conf_r(T_s\R^n)_{\Sigma_r}$$ 
from (\ref{zzp}). 
Refinements always have the same underlying topological space as that which it refines. 
Thus, the topological space underlying $\coprod_{r\geq 0} 
\Conf_r\left(\coprod_{s \in S_n} U_s\right)_{\Sigma_r}$ 
is $ \prod_{s \in S_n} \coprod_{r \geq 0} \Conf_r(T_s\R^n)_{\Sigma_r}$. 
Applying Corollary~1.2.7 then, we have the equivalence 
$$\B \coprod_{r\geq 0} \Exit\left(\Conf_r\left(\un{\coprod_{s \in S_n} U_s}\right)_{\Sigma_r}\right) 
\simeq  \prod_{s \in S_n} \coprod_{r \geq 0}\Conf_r(T_s\R^n)_{\Sigma_r} $$
which together with (\ref{puff}) yields the desired equivalence, 
since for each $s \in S_n$, the maximal $\infty$-subgroupoid is identified 
$\Exit\bl(\Ranu(T_s\R^n)\br)^{\sim} \simeq \coprod_{r \geq 0}\Conf_r(T_s\R^n)_{\Sigma_r}$.
\end{proof}

Let us briefly review. We seek to show an equivalence on the level of fibers in (\ref{hey}).
We have just shown that the classifying space of the fiber of 
$$\ev_0: {\sf Fun}^{W_n}\Bigl([1],\Exit\bigl(\Ranu(\un{\R}^n)\bigr)\bigr) 
\ra  {\sf Fun}^{W_n}\Bigl([0],\Exit\bigl(\Ranu(\un{\R}^n)\bigr)\Bigr)$$
over $\un{S}$ is equivalent to the fiber of
$$ \ev_0: \mor\Bigl(\Exit\bigl(\Ranu(\R^n)\bigr)\Bigr) \ra \Exit\bigl(\Ranu(\R^n)\bigr)^{\sim}$$
over $S$, as identified in Lemma~\ref{fiber2}.
Thus, we have yet to show that this fiber over $S$ is equivalent to the fiber of 
$$\B\ev_0: \B{\sf Fun}^{W_n}\Bigl([1],\Exit\bigl(\Ranu(\un{\R}^n)\bigr)\Bigr) \ra  \B{\sf Fun}^{W_n}\Bigl([0],\Exit\bigl(\Ranu(\un{\R}^n)\bigr)\Bigr)$$
over $\un{S}$, since, in general, the classifying space of a fiber is not 
inherently equivalent to the fiber of the map induced between classifying spaces. 
That is to say, for a functor of $\infty$-categories $\C \xra{F} \D$, 
the classifying space $\B(F^{-1}d)$ of the fiber of $F$ over $d \in \D$ 
is not necessarily equivalent to the homotopy fiber $ (\B F)^{-1}(d)$ 
of the map induced between the classifying spaces $\B\C \xra{\B F} \B\D$ 
over $d \in \B \D$. Quillen's Theorem~B \cite{Quil, AF2} identifies the homotopy fibers 
of the map induced between classifying spaces in favorable cases. 
 
\begin{theorem}[Quillen's Theorem~B] 
Given a functor between $\infty$-categories $\C \xra{F} \D$, 
if each morphism $d \xra{f} d'$ in $\D$ induces a weak equivalence 
$\B(\C^{d'/}) \xra{\simeq} \B(\C^{d/})$ between the classifying spaces 
of the induced $\infty$-undercategories, then $\B(\C^{d/})$ is 
the homotopy fiber of $\B F$ over $d$ and thus, $\B(\C^{d/}) \hra \B\C \xra{\B F} \B\D$ is a fiber sequence. 
\end{theorem}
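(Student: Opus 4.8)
The plan is to deduce this (the $\infty$-categorical form of Quillen's Theorem~B) from the Grothendieck-construction/Thomason formalism, in the spirit of the classifying-space techniques of \cite{AMG}. Write $\C^{\bullet/}\colon \D^{\op}\to \Cat_{\infty}$ for the functor sending $d$ to the undercategory $\C^{d/}$, a morphism $d\to d'$ acting by precomposition $\C^{d'/}\to\C^{d/}$, and let $q\colon\mathcal{P}\to\D^{\op}$ be its unstraightening — a cocartesian fibration with fiber $\C^{d/}$ over $d$. Unwinding the definition of undercategories identifies $\mathcal{P}$ with the pullback $\C\times_{\D}\Ar(\D)$ (evaluation at the target on the $\Ar(\D)$-factor, $F$ on the $\C$-factor); under this identification $q$ becomes evaluation at the source, and there is a further forgetful functor $p\colon\mathcal{P}\to\C$ remembering only the $\C$-coordinate (on morphisms, only the $\C$-component of a morphism of $\mathcal{P}$).

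First I would record two comparisons. (a) $\B p\colon\B\mathcal{P}\to\B\C$ is an equivalence: $p$ admits a section $\sigma\colon c\mapsto(c,\mathrm{id}_{Fc})$ together with a natural transformation $\mathrm{id}_{\mathcal{P}}\Rightarrow\sigma\circ p$, and since a natural transformation induces a homotopy on classifying spaces this already makes $\B p$ an equivalence with inverse $\B\sigma$. (Alternatively: $p$ is the base change along $F$ of the cartesian fibration $\mathrm{ev}_{1}\colon\Ar(\D)\to\D$, whose fibers $\D_{/d}$ have terminal objects, and a cartesian fibration with weakly contractible fibers is a classifying-space equivalence.) (b) Under the equivalences $\B\mathcal{P}\simeq\B\C$ and $\B\D^{\op}\simeq\B\D$, the map $\B q$ is carried to $\B F$: on objects $q\circ\sigma\circ p$ and $F\circ p$ agree, and the same natural transformation as in (a) identifies $\B q$ with $\B(q\sigma p)$.

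The heart of the argument is the colimit identity
\[
\B\mathcal{P}\ \simeq\ \mathrm{colim}_{\D^{\op}}\bigl(\B\circ\C^{\bullet/}\bigr)\ =\ \mathrm{colim}_{d\in\D^{\op}}\B(\C^{d/}),
\]
which I would obtain from $\B\mathcal{P}=\mathrm{colim}_{\mathcal{P}}(*)=\mathrm{colim}_{\D^{\op}}q_{!}(*)$ together with the fact that left Kan extension along a cocartesian fibration is computed fiberwise, so $q_{!}(*)(d)=\B(\mathcal{P}_{d})=\B(\C^{d/})$. Now the hypothesis enters: $G:=\B\circ\C^{\bullet/}\colon\D^{\op}\to\Spaces$ inverts every morphism, hence factors as $\overline{G}\circ L$ through the $\infty$-groupoid completion $L\colon\D^{\op}\to|\D^{\op}|\simeq\B\D^{\op}$; since $L$ is a localization, $L^{*}$ is fully faithful with $L_{!}L^{*}\simeq\mathrm{id}$, so $\mathrm{colim}_{\D^{\op}}G\simeq\mathrm{colim}_{\B\D^{\op}}\overline{G}$. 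Finally $\overline{G}$ is a diagram indexed by an $\infty$-groupoid, so its colimit is the total space of the associated left fibration over $\B\D^{\op}$, and over an $\infty$-groupoid base such a left fibration is a fibration in the homotopy-theoretic sense; thus for every $d$ there is a fiber sequence $\overline{G}(d)\to\mathrm{colim}_{\B\D^{\op}}\overline{G}\to\B\D^{\op}$. Transporting through the equivalences of the previous paragraph and using $\overline{G}(d)=\B(\C^{d/})$, this is exactly a fiber sequence $\B(\C^{d/})\to\B\C\xra{\B F}\B\D$ over $d$; a final bookkeeping step checks that the first map is the classifying space of the canonical functor $\C^{d/}\to\C$.

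The step I expect to be the main obstacle is the colimit identity $\B\mathcal{P}\simeq\mathrm{colim}_{\D^{\op}}\B(\C^{\bullet/})$ — not the (false) assertion that $\mathcal{P}$ is the $\Cat_{\infty}$-colimit of $\C^{\bullet/}$, but the correct statement that realization sends a cocartesian fibration to the colimit in $\Spaces$ of the realizations of its fibers, which rests on the fiberwise computation of $q_{!}$. Closely tied to it is recognizing that the hypothesis of the theorem is precisely the input that collapses the resulting colimit over $\D^{\op}$ to one over the $\infty$-groupoid $\B\D^{\op}$, where fiber sequences are automatic; everything else (the two comparisons, and the final identification of maps) is routine.
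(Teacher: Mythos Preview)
The paper does not prove this statement; it records Quillen's Theorem~B as a tool, attributing the $1$-categorical version to Quillen \cite{Quil} and the $\infty$-categorical generalization to Theorem~5.16 of \cite{AF2}. So there is no ``paper's own proof'' to compare against here.

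Your argument is a correct and standard route to the $\infty$-categorical Theorem~B, essentially the Thomason approach: realize $\C$ (up to classifying space) as the total space $\mathcal P=\C\times_{\D}\Ar(\D)$ of a fibration over $\D$ with fibers $\C^{d/}$; invoke the colimit identity $\B\mathcal P\simeq\operatorname{colim}\B(\C^{\bullet/})$; then use the hypothesis to factor this diagram through the groupoid completion $\B\D$, over which colimits of space-valued functors are computed as total spaces of fibrations, yielding the fiber sequence. The two comparison steps (a) and (b) are fine as written: the section $\sigma$ and the natural transformation $q\Rightarrow F\circ p$ supplied by the structure arrow $d\to Fc$ do exactly what you claim.

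One small point of bookkeeping worth tightening. Your $\mathcal P=\C\times_{\D}\Ar(\D)$ with $q=\ev_{0}$ is a \emph{cartesian} fibration over $\D$, not literally the cocartesian unstraightening over $\D^{\op}$; the latter has the same objects but its morphisms in the $\D$-direction run the other way. This does not affect the conclusion, since $\B$ is insensitive to taking opposites and the identity $\B\mathcal P\simeq\operatorname{colim}_{\D^{\op}}\B(\C^{d/})$ holds in either packaging, but your justification of that identity via ``$q_{!}$ of a cocartesian fibration is fiberwise'' strictly applies to the cocartesian side. If you keep $\mathcal P$ as written, simply pass to $\mathcal P^{\op}\to\D^{\op}$ (which \emph{is} cocartesian, with fibers $(\C^{d/})^{\op}$) before invoking the fiberwise Kan extension, and then use $\B X\simeq\B X^{\op}$ to return.
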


\begin{note} 
Quillen originally proved Theorem~B in \cite{Quil} for categories. 
Theorem 5.16 in \cite{AF2} generalizes the result for $\infty$-categories, 
which is the statement of Quillen's Theorem~B given above. 
\end{note}

We apply Quillen's Theorem~B for the situation at hand 
and identify the homotopy fibers of $\B\ev_0$ as follows. 

\begin{lemma} \label{fiber1pt2} 
The fiber of the map of spaces
$$\ds \B{\sf Fun}^{W_n}\Bigl([1],\Exit\bigl(\Ranu(\un{\R}^n)\bigr)\Bigr) 
\xra{\B \tx{ev}_0} \B{\sf Fun}^{W_n}\Bigl([0],\Exit\bigl(\Ranu(\un{\R}^n)\bigr)\Bigr)$$
 over an object $\un{S}=S_n \ra \cdots \ra S_1$ is equivalent to the classifying space 
 of the fiber of $\ev_0$ over $\un{S}$, 
$$\ds (\B\ev_0)^{-1}(\un{S}) \simeq \B\bl(\ev_0^{-1}(\un{S})\br).$$ 
\end{lemma}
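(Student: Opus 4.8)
The plan is to deduce this from Quillen's Theorem~B applied to the functor
$\ev_0\colon \Fun^{W_n}\bigl([1],\Exit(\Ranu(\un{\R}^n))\bigr) \to \Fun^{W_n}\bigl([0],\Exit(\Ranu(\un{\R}^n))\bigr)$.
Write $\C := \Fun^{W_n}\bigl([1],\Exit(\Ranu(\un{\R}^n))\bigr)$, and recall from Observation~\ref{one} that the target of $\ev_0$ is $W_n$. By Observation~\ref{Cart}, $\ev_0$ is a Cartesian fibration, so for each object $\un{S}\in W_n$ the inclusion $\ev_0^{-1}(\un{S}) \hookrightarrow \C^{\un{S}/}$ of the fiber into the $\infty$-undercategory admits a right adjoint $\mu$ (Lemma~2.20 of \cite{AF2}); Corollary~2.1.28 of \cite{AMG} then supplies an equivalence $\B(\C^{\un{S}/}) \simeq \B\bigl(\ev_0^{-1}(\un{S})\bigr)$. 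Consequently, once we verify the hypothesis of Quillen's Theorem~B---that every morphism $f\colon \un{S}\to\un{S}'$ in $W_n$ induces a weak homotopy equivalence $\B(\C^{\un{S}'/}) \xra{\simeq} \B(\C^{\un{S}/})$---the theorem identifies $\B(\C^{\un{S}/})$, and hence $\B\bigl(\ev_0^{-1}(\un{S})\bigr)$, with the homotopy fiber of $\B\ev_0$ over $\un{S}$, which is exactly the claim.

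To verify the hypothesis I would first reduce it to a statement about fibers. The equivalences $\B(\C^{\un{S}/})\simeq\B\bigl(\ev_0^{-1}(\un{S})\bigr)$ are compatible with transition maps: under them, the map $\B(\C^{\un{S}'/})\to\B(\C^{\un{S}/})$ induced by precomposition with $f$ corresponds to the map on classifying spaces induced by the Cartesian monodromy functor $f^{\ast}\colon \ev_0^{-1}(\un{S}')\to\ev_0^{-1}(\un{S})$ of $\ev_0$---this is the standard interplay between the right adjoint $\mu$, precomposition in undercategories, and the definition of $f^{\ast}$, of the kind recorded in Observation~\ref{mono}. Hence it is enough to prove that $\B(f^{\ast})$ is a weak equivalence for each morphism $f$ of $W_n$.

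For this last step I would unwind $f^{\ast}$ through the explicit model of $\ev_0^{-1}(\un{S})$ built in the proof of Lemma~\ref{fiber1pt1}. By definition of $W_n$, the value of $f$ under $\phi_n$ is a bijection between the leaf sets $S_n$ and $S_n'$, and every tangent space $T_s\R^n$ is canonically $\R^n$; so Lemma~\ref{fiber1pt1} identifies the source and target of $\B(f^{\ast})$ with $\prod_{s'\in S_n'}\Exit(\Ranu(T_{s'}\R^n))^{\sim}$ and $\prod_{s\in S_n}\Exit(\Ranu(T_{s}\R^n))^{\sim}$, and the leaf bijection identifies these two products homeomorphically. The functor $f^{\ast}$ takes a morphism out of $\un{S}'$, precomposes with $f$, and reflects the outcome into the fiber over $\un{S}$; in the straight-line, small-box model of the fibers this amounts to transporting, for each leaf $s'$, the tangent data recorded near $s'$ to tangent data near the corresponding leaf $s$, along the path $f$. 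Since $f$ moves the leaves without collisions, this transport is homotopic to the tautological relabeling; therefore $\B(f^{\ast})$ is homotopic to the relabeling homeomorphism and in particular is a weak equivalence.

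The main obstacle is the reduction in the second paragraph: making precise that the adjunction-induced equivalences $\B(\C^{\un{S}/})\simeq\B\bigl(\ev_0^{-1}(\un{S})\bigr)$ assemble naturally enough that the transition maps of undercategory classifying spaces coincide, up to homotopy, with the monodromy maps on fibers. This is pure bookkeeping within the Cartesian-fibration machinery of \cite{AF2}, not a new idea; once it is in place, the explicit analysis of the monodromy of a $W_n$-morphism is routine and Quillen's Theorem~B delivers the equivalence $(\B\ev_0)^{-1}(\un{S}) \simeq \B\bigl(\ev_0^{-1}(\un{S})\bigr)$.
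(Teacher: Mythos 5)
Your overall skeleton matches the paper exactly: Quillen's Theorem~B, the Cartesian fibration structure of $\ev_0$ (Observation~\ref{Cart}), the adjunction of Lemma~2.20 of \cite{AF2} together with Corollary~2.1.28 of \cite{AMG} to identify $\B(\C^{\un{S}/}) \simeq \B\bigl(\ev_0^{-1}(\un{S})\bigr)$, and the reduction via Observation~\ref{mono} to showing $\B(f^\ast)$ is a weak equivalence for each $f$ in $W_n$. So up through that reduction you and the paper are in lockstep.

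The gap is the final verification. You argue that, ``in the straight-line, small-box model of the fibers,'' the monodromy $f^\ast$ ``amounts to transporting'' tangent data along $f$, that this transport ``is homotopic to the tautological relabeling,'' and hence that $\B(f^\ast)$ is a weak equivalence. This is the right intuition, but it is not an argument. The fibers $\ev_0^{-1}(\un{S})$ and $\ev_0^{-1}(\un{S}')$ are $\infty$-categories, not spaces; Lemma~\ref{fiber1pt1} only identifies their \emph{classifying spaces} as products of configuration spaces, and that identification passes through an adjunction to a subcategory $\Exit\bigl(\coprod_k \Conf_k^{\epsilon(\un{S})}(\un{\R}^n)_{\Sigma_k}\bigr)$ and then a refinement argument. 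You need to know how the Cartesian monodromy functor interacts with \emph{those} equivalences in order to conclude anything about $\B(f^\ast)$. Saying $f^\ast$ ``is homotopic to relabeling'' doesn't make sense at the level of functors, and at the level of classifying spaces it is exactly the thing that needs to be proved; no homotopy or natural transformation is exhibited, and ``transport along $f$'' is not a functor you have defined on the fiber. Your remark that this is ``pure bookkeeping'' and ``routine'' understates what is missing.

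The paper fills this hole differently, and the difference is worth noting. Instead of analyzing $f^\ast$ geometrically, it applies Observation~\ref{mono} to the diagram with the forgetful functor $\Exit\bigl(\Ranu(\un{\R}^n)\bigr) \to \Exit\bigl(\Ranu(\R^n)\bigr)$. On the unrefined side, the corresponding fiber $\ev_0^{-1}(S_n)$ is already a space (Lemma~\ref{fiber2} identifies it with $\prod_{s\in S_n}\Exit\bigl(\Ranu(T_s\R^n)\bigr)^{\sim}$), and the monodromy $\mathrm{frgt}(\alpha)^\ast$ is already an equivalence there, directly because $\phi_n(\alpha)$ is a bijection --- no homotopy required. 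The paper then uses the universal property of the localization $\ev_0^{-1}(\un{S}) \to \B\ev_0^{-1}(\un{S})$ to factor the forgetful map through the classifying space, recognizes the filler as an equivalence, and pastes the resulting squares to deduce $\B\alpha^\ast$ is an equivalence by two-out-of-three. This is the ingredient your proposal lacks: a way to transfer the bijection on leaves into an honest equivalence on classifying spaces of fibers without having to homotope a functor. You should either reproduce that argument or supply an actual natural equivalence realizing your relabeling heuristic.
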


\begin{proof}
Fix a morphism $\ds \un{S} \xra{\alpha} \un{S}'$ in 
$\ds {\sf Fun}^{W_n}\Bigl([0],\Exit\bigl(\Ranu(\un{\R}^n)\bigr)\Bigr)$. 
Recall that the induced monodromy functor $\alpha^\ast$ is defined as the composite
\[ \begin{tikzcd}
\ev_0^{-1}(\un{S}') \ar[hookrightarrow]{d} \ar{r}{\alpha^\ast} & \ev_0^{-1}(\un{S}) \\
 {\sf Fun}^{W_n}\Bigl([1],\Exit\bigl(\Ranu(\un{\R}^n)\bigr)\Bigr)^{\un{S}'_/} \ar{r}{- \circ \alpha} 
 &  {\sf Fun}^{W_n}\Bigl([1],\Exit\bigl(\Ranu(\un{\R}^n)\bigr)\Bigr)^{\un{S}_/} \ar{u}[swap]{\mu} 
\end{tikzcd} \]
where $\mu$ is right adjoint to inclusion (which exists by Lemma~2.20 of \cite{AF2}). 
The diagram induced upon taking classifying spaces 
\begin{equation} \label{Bmono} 
\begin{tikzcd}
\B\ev_0^{-1}(\un{S}') \ar{d}[swap]{\simeq} \ar{r}{\B\alpha^\ast} & \B\ev_0^{-1}(\un{S}) \\
 \B\tx{Fun}^{W_n}\Bigl([1],\Exit\bigl(\Ranu(\un{\R}^n)\bigr)\Bigr)^{\un{S}'_/} \ar{r}{\B(- \circ \alpha)} 
 &  \B\tx{Fun}^{W_n}\Bigl([1],\Exit\bigl(\Ranu(\un{\R}^n)\bigr)\Bigr)^{\un{S}_/} \ar{u}[swap]{\simeq} 
\end{tikzcd} 
\end{equation}
yields the vertical arrows to be equivalences by Corollary~2.1.28 of \cite{AMG}.

We seek to show that $\B\alpha^\ast$ is an equivalence. Indeed, our desired equivalence 
$$\ds (\B\ev_0)^{-1}(\un{S}) \simeq \B\bl(\ev_0^{-1}(\un{S})\br)$$ 
will follow by Quillen's Theorem~B; let us explain further. If $\B\alpha^\ast$ 
is an equivalence, then, according to (\ref{Bmono}), $\B(- \circ \alpha)$ is an equivalence. 
By Quillen's Theorem~B, $\B(- \circ \alpha)$ being an equivalence yields an equivalence
$$ (\B\ev_0)^{-1}(\un{S}) \simeq \B{\sf Fun}^{W_n}([1],\Exit\bigl(\Ranu(\un{\R}^n)\bigr))^{\un{S}_/}.$$
The upward vertical arrow in (\ref{Bmono}) is an equivalence between 
$\B\tx{Fun}^{W_n}\Bigl([1],\Exit\bigl(\Ranu(\un{\R}^n)\bigr)\Bigr)^{\un{S}_/}$ and  $\B\ev_0^{-1}(\un{S})$.
Thus, $ (\B\ev_0)^{-1}(\un{S})$ is equivalent to $\B\ev_0^{-1}(\un{S})$.

Now consider the diagram 
\begin{equation} \begin{tikzcd} 
\Fun^{W_n}\Bigl([1], \Exit\bigl(\Ranu(\un{\R}^n)\bigr)\Bigr) \ar{r}{\tx{frgt}} \ar{d}[swap]{\tx{ev}_0} 
& \mor\Bigl(\Exit\bigl(\Ranu(\R^n)\bigr)\Bigr) \ar{d}{\tx{ev}_0} \\
\Fun^{W_n}\Bigl([0], \Exit\bigl(\Ranu(\un{\R}^n)\bigr)\Bigr) \ar{r}{\simeq} 
& \Exit\bigl(\Ranu(\R^n)\bigr)^{\sim}
\end{tikzcd} \end{equation}
in which each vertical arrow is a Cartesian fibration. 
By Observation~\ref{mono}, the induced monodromy functor $\alpha^\ast$ 
is carried by the forgetful functor to the induced monodromy functor 
of the image of $\alpha$ under the forgetful functor, $\tx{frgt}(\alpha)^\ast$:
\begin{equation} \label{diag1} \begin{tikzcd}
\ev_0^{-1}(\un{S}') \ar{r}{\alpha^\ast} \ar{d}[swap]{\text{frgt}} 
& \ev_0^{-1}(\un{S}) \ar{d}{\tx{frgt}} \\
\ev_0^{-1}(S_n') \ar{r}{\tx{frgt}(\alpha)^\ast} & \ev_0^{-1}(S_n).
\end{tikzcd} \end{equation}
Observe that $\ds \tx{frgt}(\alpha)^\ast$ is an equivalence precisely 
because the image of the morphism $\alpha$ under the forgetful functor, 
$ S_n \xra{\alpha_n} S_n'$,
is a bijection.
We apply the universal property of localization to the canonical localization 
$\ds  \ev_0^{-1}(\un{S}) \ra \B \ev_0^{-1}(\un{S})$ to obtain
\begin{equation} \label{diag2} 
\begin{tikzcd}
 \ev_0^{-1}(\un{S}) \ar{r}{\tx{frgt}} \ar{d} 
& \ds \ev_0^{-1}(S_n) \simeq \prod_{s \in S_n} \Exit\Bigl(\Ranu(T_s\R^n)_{\Sigma_r}\Bigr)^{\sim} \\
\B\ev_0^{-1}(\un{S}) \simeq \ds \prod_{s \in S_n} \Exit\Bigl(\Ranu(T_s\R^n)_{\Sigma_r}\Bigr)^{\sim} 
\ar[dashrightarrow]{ur}[swap]{\exists !} &
\end{tikzcd} 
\end{equation}
and observe that such a filler must be an equivalence. 
We paste (\ref{diag1}) and (\ref{diag2}) for $\un{S}$ and $\un{S}'$
 together to see that $\B\alpha^\ast$ is an equivalence:
\[ \begin{tikzcd}
\ev_0^{-1}(\un{S}') \ar{dr}{\tx{loc}}\ar{rrr}{\alpha^\ast} \ar{dd}[swap]{\text{frgt}} 
&&& \ev_0^{-1}(\un{S}) \ar{dd}{\tx{frgt}} \ar{dl}{\tx{loc}} \\
&\B\ev_0^{-1}(\un{S}') \ar{dl}{\simeq} \ar{r}{\B\alpha^\ast}  
& \B\ev_0^{-1}(\un{S}) \ar{dr}{\simeq} & \\
\ev_0^{-1}(S_n') \ar{rrr}{\simeq} &&& \ev_0^{-1}(S_n).
\end{tikzcd} \]

According to (\ref{Bmono}), the map $\B\alpha^\ast$ being an equivalence 
implies that the map $\B(- \circ \alpha)$ is an equivalence. Thus, by Quillen's Theorem~B, 
the fiber of $\B\ev_0$ over $\un{S}$ is identified as 
$$\B{\sf Fun}^{W_n}([1],\Exit\bigl(\Ranu(\un{\R}^n)\bigr))^{\un{S}_/}$$ 
which according to (\ref{Bmono}) is equivalent to $\B\ev_0^{-1}(\un{S})$.
\end{proof}

Lemma~\ref{pis1} which states that the map induced by the forgetful functor
$$ \B\Fun^{W_n}\Bigl([1], \Exit\bigl(\Ranu(\un{\R}^n)\bigr)\Bigr) 
\ra \mor\Bigl(\Exit\bigl(\Ranu(\R^n)\bigr)\Bigr)$$
is an equivalence, follows almost immediately. 

\begin{proof}[Proof of Lemma~\ref{pis1}]
We first recall (\ref{hey})
\begin{equation} \label{fibrations}  
\begin{tikzcd} 
\B\Fun^{W_n}\Bigl([1], \Exit\bigl(\Ranu(\un{\R}^n)\bigr)\Bigr) 
\ar{rr}{\tx{frgt}} \ar{d}[swap]{\B\tx{ev}_0} 
&& \mor\Bigl(\Exit\bigl(\Ranu(\R^n)\bigr)\Bigr) \ar{d}{\tx{ev}_0} \\
\B\Fun^{W_n}\Bigl([0], \Exit\bigl(\Ranu(\un{\R}^n)\bigr)\Bigr) \ar{rr}{\simeq} 
\ar{rr}[swap]{\tx{Lemma}~\ref{pis0}} && \Exit\bigl(\Ranu(\R^n)\bigr)^{\sim}.
\end{tikzcd} 
\end{equation}
First note that the bottom horizontal equivalence in (\ref{fibrations})
is induced by the forgetful functor from 
$\Exit\bigl(\Ranu(\un{\R}^n)\bigr)$ to $\Exit\bigl(\Ranu(\R^n)\bigr)$
since the map between total spaces is induced by the forgetful functor.

Lemmas \ref{fiber2} and \ref{fiber1pt2} identify the fibers over the 
righthand and lefthand vertical arrows of (\ref{fibrations}), respectively, as equivalent. 
Thus, the induced long exact sequence in homotopy of each fibration
induces a weak homotopy equivalence between the total spaces, 
which is, in fact, a homotopy equivalence since the total spaces are CW complexes. 
\end{proof}

\subsection{The final lemma} \label{oh} 
The goal of this subsection is prove Lemma~\ref{class2}
which states that the simplicial space 
$\B \Fun^{W_n}\Bigl([\bullet], \Exit\bigl(\Ranu(\un{\R}^n)\bigr)\Bigr)$ 
is a complete Segal space. 
In so doing we verify the hypothesis of Theorem~\ref{BcSS}
and complete our proof of our main result Theorem~\ref{loc}.
The proof of Lemma \ref{class2} is technical using 
Cartesian fibrations and Quillen's Theorem~B, 
and builds off of arguments developed in \S\ref{1}. 
We begin with a key lemma. 

\begin{lemma} \label{Qlemma} Given a pullback of $\infty$-categories 
\begin{equation} \label{okie} \begin{tikzcd}
\E \pb \ar{d}[swap]{\pi} \ar{r}{G} & \E' \ar{d}{\pi'} \\
\B \ar{r}{F} & \B' 
\end{tikzcd} \end{equation}
in which $\pi'$ is a Cartesian fibration, if $\pi'$ satsifies Quillen's Theorem~B, then so does $\pi$. 
\end{lemma}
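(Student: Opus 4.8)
\textbf{Proof plan for Lemma~\ref{Qlemma}.}

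The plan is to chase the definitions through the pullback square and reduce the claim directly to the hypothesis on $\pi'$, using two standard facts about pullbacks of $\infty$-categories: first, that pullback along $F$ preserves Cartesian fibrations (so $\pi$ is a Cartesian fibration, and its fibers are identified with fibers of $\pi'$), and second, that pullback is compatible with the formation of undercategories in the sense made precise below. Concretely, for an object $b\in\B$ write $b' := F(b)\in\B'$. Because $G$ restricts to an equivalence on fibers $\pi^{-1}(b)\xra{\simeq}\pi'^{-1}(b')$ (this is the defining property of a pullback of $\infty$-categories, since the fiber of $\pi$ over $b$ is the fiber of $\pi'$ over $b'$ pulled back along $\ast\xra{\langle b\rangle}\B\xra{F}\B'$, which is just $\pi'^{-1}(b')$), it suffices to produce, for each $b\in\B$, an equivalence of classifying spaces $\B(\E^{b/})\simeq\B(\E'^{b'/})$ which is natural in $b$ with respect to the monodromy/undercategory maps induced by morphisms of $\B$.

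The key step is to show that the canonical functor $\E^{b/}\to \E'^{b'/}$ induced by $G$ becomes an equivalence after applying $\B$. First I would observe that, since $\pi$ is a Cartesian fibration (by the first standard fact), the inclusion $\pi^{-1}(b)\hookrightarrow \E^{b/}$ is a right adjoint by Lemma~2.20 of \cite{AF2} — wait, rather, the relevant statement is that the inclusion of the fiber into the undercategory admits a right adjoint $\mu$, and Corollary~2.1.28 of \cite{AMG} then gives $\B(\pi^{-1}(b))\simeq \B(\E^{b/})$; the same holds for $\pi'$. Thus $\B(\E^{b/})\simeq\B(\pi^{-1}(b))\xra{\simeq}\B(\pi'^{-1}(b'))\simeq\B(\E'^{b'/})$, where the middle equivalence is $\B$ applied to the fiber equivalence from the pullback. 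This establishes the pointwise equivalence. For naturality, I would invoke Observation~\ref{mono}: for a morphism $b\xra{\alpha}b'$ in $\B$ (notation clash — call it $b_1\xra{\alpha}b_2$), the monodromy functor $\alpha^\ast$ is carried by $G$ to the monodromy functor $F(\alpha)^\ast$, and moreover, since the square is a pullback, the comparison maps on fibers are equivalences; applying $\B$ and using the adjunction-induced identifications above, the square relating $\B(\E^{b_2/})\xra{}\B(\E^{b_1/})$ to $\B(\E'^{F(b_2)/})\xra{}\B(\E'^{F(b_1)/})$ commutes with vertical equivalences.

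Finally I would assemble the conclusion: $\pi'$ satisfying Quillen's Theorem~B means every morphism $b_1'\xra{f}b_2'$ of $\B'$ induces a weak equivalence $\B(\E'^{b_2'/})\xra{\simeq}\B(\E'^{b_1'/})$. Given a morphism $b_1\xra{\alpha}b_2$ in $\B$, the induced map $\B(\E^{b_2/})\to\B(\E^{b_1/})$ fits into a square whose other three sides are equivalences (two by the pointwise argument above, one by the hypothesis applied to $F(\alpha)$), hence it is an equivalence as well; therefore $\pi$ satisfies Quillen's Theorem~B. I expect the main obstacle to be the bookkeeping in the naturality step — verifying that the two identifications $\B(\E^{b/})\simeq\B(\pi^{-1}(b))$ (via the adjoint $\mu$) and the fiber equivalence from the pullback genuinely fit together into a commuting ladder compatible with monodromy, rather than merely commuting up to unspecified homotopy. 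This is exactly the kind of coherence that Observation~\ref{mono} is designed to supply, so the argument should go through, but it is where care is required.
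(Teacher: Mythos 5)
Your proposal is correct and follows essentially the same route as the paper's proof: base change of the Cartesian fibration, Observation~\ref{mono} for compatibility of monodromy functors and fiber equivalences under the pullback, the adjunction $\pi^{-1}(b)\hookrightarrow \E^{b/}$ giving equivalences on classifying spaces, and a ladder diagram reducing the claim to the hypothesis on $\pi'$. The ``coherence'' concern you flag at the end is exactly what Observation~\ref{mono} handles, as you surmise, so the argument goes through as you planned.
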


\begin{proof} 
Let $b \xra{f} b'$ be a morphism in $\B$. Lemma~2.8 of \cite{AF2} 
states that Cartesian fibrations are closed under base change. 
Thus, $\pi'$ being a Cartesian fibration and (\ref{okie}) 
being a pullback implies that $\pi$ is a Cartesian fibration. 
By Observation~\ref{mono} then, $G$ carries the induced monodromy functor 
$f^\ast$ to the induced monodromy functor $F(f)^\ast$ and yields equivalences between the fibers
\[ \begin{tikzcd}
\pi^{-1}(b') \ar{r}{f^\ast}  \ar{d}[swap]{\simeq}& \pi^{-1}(b)   \ar{d}{\simeq}  \\
\pi'^{-1}(F(b')) \ar{r}{F(f)^\ast}  & \pi'^{-1}(F(b)).
\end{tikzcd} \]
Combining this diagram with the definition of the monodromy functor yields
\begin{equation}  
\begin{tikzcd}
\E^{b'_/} \ar{r}{- \circ f} & \E^{b_/}  \\ 
\pi^{-1}(b') \ar{r}{f^\ast} \ar[hookrightarrow]{u} \ar{d}[swap]{\simeq}
& \pi^{-1}(b)  \ar[hookrightarrow]{u}  \ar{d}{\simeq}  \\
\pi'^{-1}(F(b')) \ar{r}{F(f)^\ast} \ar[hookrightarrow]{d} & \pi'^{-1}(F(b)) \ar[hookrightarrow]{d} \\
\E'^{F(b')_/} \ar{r}{- \circ F(f)} & \E'^{F(b)_/}.
\end{tikzcd} 
\end{equation}
The diagram induced upon taking classifying spaces yields the desired result. 
Indeed, $\pi'$ satisfying Quillen's Theorem~B implies $\B(- \circ F(f))$ 
is a equivalence and thus, each horizontal arrow resulting 
between classifying spaces is an equivalence, 
which in particular means $\B(- \circ f)$ in the following diagram is an equivalence
\[  \begin{tikzcd}
\B\E^{b'_/} \ar{r}{\B(- \circ f)} & \B\E^{b_/}  \\ 
\B\pi^{-1}(b') \ar{r}{\B f^\ast} \ar[hookrightarrow]{u}{\simeq} \ar{d}[swap]{\simeq}
& \B\pi^{-1}(b)  \ar[hookrightarrow]{u}{\simeq}  \ar{d}{\simeq}  \\
\B\pi'^{-1}(F(b')) \ar{r}{\B F(f)^\ast} \ar[hookrightarrow]{d}{\simeq} 
& \B\pi'^{-1}(F(b)) \ar[hookrightarrow]{d}{\simeq} \\
\B\E'^{F(b')_/} \ar{r}{\B(- \circ F(f))} & \B\E'^{F(b)_/}.
\end{tikzcd} \]
\end{proof}

\begin{observation} \label{pb} 
The following diagram of $\infty$-categories is pullback
\begin{equation} \label{pullback} 
\begin{tikzcd}[row sep=15, column sep=0]
\Fun^{W_n}\Bigl([p], \Exit\bigl(\Ranu(\un{\R}^n)\bigr)\Bigr) \pb \ar{r}{ \tau}\ar{d}[swap]{ \sigma} 
& \Fun^{W_n}\Bigl(\{1-p<p\}, \Exit\bigl(\Ranu(\un{\R}^n)\bigr)\Bigr) \ar{d}{ s} \\
\Fun^{W_n}\Bigl(\{0<\cdots < p-1\}, \Exit\bigl(\Ranu(\un{\R}^n)\bigr)\Bigr) \ar{r}{ t} 
& \Fun^{W_n}\Bigl(\{p-1\}, \Exit\bigl(\Ranu(\un{\R}^n)\bigr)\Bigr).
\end{tikzcd} 
\end{equation}
Indeed, for an $\infty$-category $\C$, $\Fun([\bullet], \C)$ 
satisfies the Segal condition (Def. \ref{cSegspc}),
i.e., for each $p \geq 2$, the diagram obtained by replacing 
$ \Fun^{W_n}\Bigl([p], \Exit\bigl(\Ranu(\un{\R}^n)\bigr)\Bigr) $ 
with $\Fun\bl([p], \C\br)$ in (\ref{pullback}) is pullback. 
Using this, it is straightforward to show (\ref{pullback}) is pullback.
\end{observation}

We are finally ready to prove Lemma~\ref{class2} 
which states that 
$\B \Fun^{W_n}\Bigl([\bullet], \Exit\bigl(\Ranu(\un{\R}^n)\bigr)\Bigr)$ 
is a complete Segal space. 

\begin{proof}[Proof of Lemma~\ref{class2}]
First we show that  
$\ds \B\tx{Fun}^{W_n}\Bigl([\bullet],\Exit\bigl(\Ranu(\un{\R}^n)\bigr)\Bigr)$ 
satisfies the Segal condition. 
Consider the diagram of spaces obtained by taking the classifying spaces of (\ref{pullback})
\begin{equation} \label{Bpullback} 
\begin{tikzcd}[row sep=25, column sep=2]
\B\Fun^{W_n}\Bigl([p], \Exit\bigl(\Ranu(\un{\R}^n)\bigr)\Bigr) 
\ar{r}{\B \tau}\ar{d}[swap]{\B \sigma} 
& \B\Fun^{W_n}\Bigl(\{p-1 < p\}, \Exit\bigl(\Ranu(\un{\R}^n)\bigr)\Bigr) \ar{d}{\B s} \\
\ds \B\Fun^{W_n}\Bigl(\{0<\cdots<p-1\}, \Exit\bigl(\Ranu(\un{\R}^n)\bigr)\Bigr) \ar{r}{\B t} 
& \B\Fun^{W_n}\Bigl(\{p-1\}, \Exit\bigl(\Ranu(\un{\R}^n)\bigr)\Bigr).
\end{tikzcd} 
\end{equation}
To show that this diagram is a pullback, 
we will show that the map induced between fibers of (\ref{Bpullback}) is an equivalence. 
By Observation~\ref{Cart}, the functor $s$ in (\ref{pullback}) is a Cartesian fibration. 
Further, in the proof of Lemma~\ref{fiber1pt2}, 
we showed that $s$ satisfies Quillen's Theorem~B. 
Thus, (\ref{pullback}) satsifies the hypothesis' of Lemma~\ref{Qlemma} 
and we identify the fibers of $\B\sigma$ and $\B s$ over the objects 
$$\ds \un{S}_0 \to \cdots \to \un{S}_{p-1} \shs \shs \tx{in} \shs \shs 
\B\Fun^{W_n}\Bigl(\{0<\cdots<p-1\}, \Exit\bigl(\Ranu(\un{\R}^n)\bigr)\Bigr)$$
 and $$\ds \un{S}_{p-1} \shs \shs \tx{in} \shs \shs \B\Fun^{W_n}\Bigl(\{p-1\}, 
 \Exit\bigl(\Ranu(\un{\R}^n)\bigr)\Bigr)$$ 
respectively, as the classifying spaces of the fibers of $\sigma$ and $s$ over  
$\ds \un{S}_0 \to \cdots \to \un{S}_{p-1}$ and $\ds \un{S}_{p-1}$, respectively
$$(\B\sigma)^{-1}(\un{S}_0 \to \cdots \to \un{S}_{p-1}) 
\simeq \B\sigma^{-1}(\un{S}_0 \to \cdots \to \un{S}_{p-1})$$
\centerline{and} 
$$(\B s)^{-1}(\un{S}_{p-1}) \simeq \B s^{-1}(\un{S}_{p-1}).$$
Therefore, because (\ref{pullback}) being a pullback implies 
an equivalence between fibers induced by $\tau$
$$\ds \tau_{|}: \sigma^{-1}(\un{S}_0 \to \cdots \to \un{S}_{p-1}) \xra{\simeq} s^{-1}(\un{S}_{p-1})$$ 
there results an equivalence between fibers of (\ref{Bpullback}) given by $\B \tau_{|}$
$$(\B\sigma)^{-1}(\un{S}_0 \to \cdots \to \un{S}_{p-1}) 
\simeq \B\sigma^{-1}(\un{S}_0 \to \cdots \to \un{S}_{p-1}) 
\xra{\simeq}  \B s^{-1}(\un{S}_{p-1}) \simeq (\B s)^{-1}(\un{S}_{p-1})$$
which verifies that (\ref{Bpullback}) is a pullback.

Now that we have show that 
$\ds \B{\sf Fun}^{W_n}\Bigl([\bullet],\Exit\bigl(\Ranu(\un{\R}^n)\bigr)\Bigr)$ 
satisfies the Segal condition, we know that its values on $[0]$ and $[1]$ 
determine all of its higher $[p]$ values. 
In particular this means that
Lemma~\ref{class1} extends to an equivalence of spaces 
\begin{equation} \label{cat}
\ds \B{\sf Fun}^{W_n}\Bigl([p],\Exit\bigl(\Ranu(\un{\R}^n)\bigr)\Bigr) 
\simeq \Hom_{\Cat_{\infty}}\Bigl([p], \Exit\bigl(\Ranu(\R^n)\bigr)\Bigr)
\end{equation} 
for each $p \geq 0$.
Because $\Exit\bigl(\Ranu(\R^n)\bigr)$ is a complete Segal space,
$\ds \Hom_{\Cat_{\infty}}\Bigl([\bullet], \Exit\bigl(\Ranu(\R^n)\bigr)\Bigr)$ 
is a complete Segal space as well.
Thus, $\ds \B{\sf Fun}^{W_n}\Bigl([\bullet],\Exit\bigl(\Ranu(\un{\R}^n)\bigr)\Bigr) $ 
by equivalence (\ref{cat}).
\end{proof} 

We have now verified the hypothesis of Theorem~\ref{BcSS}, 
which means that the localization of $\Exit\bl(\Ranu(\un{\R}^n)\br)$ on $W_n$ 
is equivalent to the simplicial space $\Fun^{W_n}\Bigr([\bullet], \Exit\bl(\Ranu(\un{\R}^n)\br)\Bigr)$. 
We are finally ready to prove Lemma~\ref{preloc}, 
which states that the forgetful functor from
$\Exit\bigl(\Ranu(\un{\R}^n)\bigr)$ to $\Exit\bigl(\Ranu(\R^n)\bigr)$
is a localization on $W_n$.

\begin{proof}[Proof of Lemma~\ref{preloc}]
In the proof of the Lemma~\ref{class2}, we showed an equivalence
\begin{equation} \label{ahhh} 
\B\Fun^{W_n}\Bigl([\bullet], \Exit\bigl(\Ranu(\un{\R}^n)\bigr)\Bigr) 
\simeq \tx{Hom}_{\Cat_{\infty}}\Bigl([\bullet], \Exit\bigl(\Ranu(\R^n)\bigr)\Bigr) 
\end{equation}
 which, in particular means that the hypothesis of Theorem~\ref{BcSS} is satisfied. 
 Thus, by Theorem~\ref{BcSS},
$$\Exit\bigl(\Ranu(\un{\R}^n)\bigr)[W_n^{-1}] 
\simeq \B\Fun^{W_n}\Bigl([\bullet], \Exit\bigl(\Ranu(\un{\R}^n)\bigr)\Bigr).$$
Then, by the equivalence (\ref{ahhh}), we have an equivalence of simplicial spaces
$$\Exit\bigl(\Ranu(\un{\R}^n)\bigr)[W_n^{-1}]  
\simeq \tx{Hom}_{\Cat_{\infty}}\Bigl([\bullet], \Exit\bigl(\Ranu(\R^n)\bigr)\Bigr)$$ 
which  establishes that $\Exit\bigl(\Ranu(\un{\R}^n)\bigr)$ 
localizes on $W_n$ to $\Exit\bigl(\Ranu(\R^n)\bigr)$. 

Note that this localization is given by the forgetful functor 
from $\Exit\bigl(\Ranu(\un{\R}^n)\bigr) $ to $\Exit\bigl(\Ranu(\R^n)\bigr)$ 
because our identification of  $\B \Fun^{W_n}\Bigl([\bullet], \Exit\bigl(\Ranu(\un{\R}^n)\bigr)\Bigr)$ 
with  $\Hom_{\Cat_\infty}\Bigl([\bullet], \Exit\bigl(\Ranu(\R^n)\bigr)\Bigr)$ 
was induced by the forgetful functor. (This was proven in Lemma~\ref{pis1}.)

Lastly, we need to show that this localization is over $\Fin^\op$. 
In Observation~\ref{forget}, we observed that the forgetful functor from 
$ \Exit(\Ran^u(\un{\R}^n))$ to $\Exit(\Ran^u(\R^n))$ 
is naturally over $\Fin^\op$ by just remembering the data of underlying sets at the $\R^n$ level. 
Then, by the universal property of localization, we have
\[ \begin{tikzcd} 
\Exit\bigl(\Ranu(\un{\R}^n)\bigr) \ar{r}{\tx{frgt}} \ar{dr}[swap]{\phi_n} 
& \Exit\bigl(\Ranu(\R^n)\bigr) \ar[phantom]{r}{\simeq} \ar{d}{\phi} 
& \Exit\bigl(\Ranu(\un{\R}^n)\bigr)[W_n^{-1}] \ar[dashrightarrow]{dl}{\exists !}  \\
& \Fin^\op. & \end{tikzcd} \]
The unique existence of such a filler is guaranteed because each morphism 
in $W_n$ gets carried to isomorphisms in $\Fin^\op$ under $\phi_n$.
Thus, we see that the forgetful functor from 
$\Exit\bigl(\Ranu(\un{\R}^n)\bigr)$ to $\Exit\bigl(\Ranu(\R^n)\bigr)$ yields a localization over $\Fin^\op$. 
\end{proof}

Theorem~\ref{inteq} and Lemma~\ref{preloc} together immediately imply our main result, 
Theorem~\ref{loc}, that the category $\mathbf{\Theta}_n^\Act$ 
localizes to the $\infty$-category $\Exit(\Ranu(\R^n))$. 

An immediate corollary to Theorem~\ref{loc} is that the wreath product 
decomposition of $\mathbf{\Theta}_2$ induces a likewise decomposition of $\Exit(\Ranu(\R^2))$. 

\begin{cor} \label{cor 2}
There is a localization of $\infty$-categories
$$\Exit\bigl(\Ranu(\R)\bigr) \wr \Exit\bigl(\Ranu(\R)\bigr) \to \Exit\bigl(\Ranu(\R^2)\bigr)$$
from the two fold wreath product of the exit-path $\infty$-category 
of the unital Ran space of $\R$ with itself 
and the exit-path $\infty$-category of the unital Ran space of $\R^2$. 
\end{cor}

\section{A nonunital version of the main result} \label{bigcor}

In this section, we prove a corollary to Theorem~\ref{loc}
which identifies the exit-path $\infty$-category 
of the Ran space of $\R^n$ $\Exit(\Ran(\R^n))$ as a localization of 
a certain subcategory of $\mathbf{\Theta}_n^\Act$. 
This result specializes the localization of Theorem~\ref{loc} 
by restricting $\Exit\bigl(\Ranu(\R^n)\bigr)$ to its $\infty$-subcategory $\Exit(\Ran(\R^n))$.

Recall the definition of $\Exit\bigl(\Ran(M)\bigr)$ (Def. \ref{little exit}) 
as the pullback of simplicial spaces
\[ \begin{tikzcd}
\Exit\bigl(\Ran(M)\bigr) \ar[hookrightarrow]{r} \pb \ar{d} 
& \Exit\bigl(\Ranu(M)\bigr)\ar{d}{\phi} \\
\bl(\Fin_{\neq \emptyset}^\surj \br)^\op \ar[hookrightarrow]{r} &  \Fin^\op
\end{tikzcd} \]
where $\Fin^\surj_{\neq \emptyset}$ is the subcategory of finite sets 
consisting of nonempty sets and only surjective morphisms.
As a pullback of $\infty$-categories, note that $\Exit\bigl(\Ran(M)\bigr)$ 
too is an $\infty$-category (Cor. \ref{wow}).

\begin{remark}
The $\infty$-category $\Exit\bigl(\Ran(M)\bigr)$ is equivalent to
Lurie's exit-path $\infty$-category 
construction (Def. \ref{exitdef}) applied to the Ran space of $M$ (Def. \ref{Ran}).
\end{remark}

The main result of this section identifies $\Exit(\Ran(\R^n))$ as a localization 
of a certain subcategory of $\mathbf{\Theta}_n^\Act$ defined as follows.

\begin{definition} \label{exit} 
The category $\mathbf{\Theta}_n^\exit$ is the 
subcategory of $\mathbf{\Theta}_n^\Act$ defined as the pullback
\[ \begin{tikzcd}
\mathbf{\Theta}_n^\exit \ar[hookrightarrow]{r} \pb \ar{d} 
& \mathbf{\Theta}_n^\Act \ar{d}{\tau} \\
\Fun\Bigl(\{1< \cdots < n\}, \bl(\Fin_{\neq \emptyset}^\surj \br)^\op\Bigr) \ar[hookrightarrow]{r} 
& \Fun\bigl(\{1< \cdots < n\}, \Fin^\op\bigr)
\end{tikzcd} \]
where we recall the functor $\tau$ from Observation~\ref{layers} 
(defined by the truncation functor $\tr_i$ and $\gamma_i$). 
 \end{definition}

Heuristically $\mathbf{\Theta}_n^\exit$ consists of healthy trees 
as its objects and all those morphisms that induce surjections between the sets of leaves. 
We are now able to state the main result of this section 
which articulates the sense in which $\Exit(\Ran(\R^n))$ is identified 
combinatorially in terms of $\mathbf{\Theta}_n$. 

\begin{cor} \label{locexit} 
For $n \geq 1$ there is a localization of $\infty$-categories
$$\mathbf{\Theta}_n^\exit \ra \Exit\bigl(\Ran(\R^n)\bigr) $$
which is contravariant over the category of nonempty finite sets and surjective morphisms.
\end{cor}

Recall the category $W_n^\h$ (Def. \ref{Whealthy}) which consists of healthy trees as 
its objects and all those morphisms in $\mathbf{\Theta}_n$ that 
induce bijections on the sets of leaves under $\gamma_n$ to $\Fin$.
(Note that we are considering $W_n^\h$ as a subcategory of 
$\mathbf{\Theta}_n^\Act$ in light of the equivalence 
$\Exit\bigl(\Ranu(\un{\R}^n)\bigr) \simeq \mathbf{\Theta}_n^\Act$ (Thm.~\ref{inteq}).)
The localizing subcategory of Corollary~\ref{locexit} is nearly 
equivalent to the subcategory $W_n^\h$.
The only difference is that it does not contain the empty tree as an object 
nor the identity morphism on the empty tree.
The definition of the localizing subcategory of Corollary~\ref{locexit} is as follows.

\begin{definition} 
The subcategory $(W_n^\h)_{\neq \emptyset}$ of $\mathbf{\Theta}_n^\exit$ is defined to be the pullback
\[ \begin{tikzcd}
(W_n^\h)_{\neq \emptyset} \pb \ar[hookrightarrow]{r} \ar[hookrightarrow]{d} & W_n^\h \ar[hookrightarrow]{d} \\
\mathbf{\Theta}_n^\exit \ar[hookrightarrow]{r} & \mathbf{\Theta}_n^\Act
\end{tikzcd} \] 
of categories. 
\end{definition}

Heuristically then, $(W_n^\h)_{\neq \emptyset} \hra \mathbf{\Theta}_n^\exit$ 
has all those nonempty, healthy trees of $\mathbf{\Theta}_n$ 
as its objects and only those morphisms that induce bijections between the sets of leaves. 

The proof of Corollary~\ref{locexit} is similar to that of Lemma~\ref{preloc} 
in that we identify the localization of $\mathbf{\Theta}_n^\exit$ on $(W_n^\h)_{\neq \emptyset}$ 
using Theorem~\ref{BcSS}, which states that if the simplicial space 
$\B\Fun^{(W_n^\h)_{\neq \emptyset}}\bl([\bullet], \mathbf{\Theta}_n^\exit \br)$ is a complete Segal space, 
then it is equivalent to the localization of $\mathbf{\Theta}_n^\exit$ on $(W_n^\h)_{\neq \emptyset}$. 
We do this by extrapolating the argument of Lemma~\ref{preloc}, 
using the fact that the domain and codomain of Corollary~\ref{locexit} 
are $\infty$-subcategories of the domain and codomain of the localization of Theorem~\ref{loc}. 

The first lemma that we need follows from Lemma~\ref{adj} 
wherein we showed that there is an adjunction between $W_n^\h$ and $W_n$. 
Before stating the lemma we introduce a subcategory of $\mathbf{\Theta}_n^\Act$
that will be useful for the rest of this section.

\begin{definition}
The category $\mathbf{\Theta}_n^{\Act, \h}$ is the full subcategory of 
$\mathbf{\Theta}_n^\Act$ consisting of all those objects that are healthy trees. 
\end{definition}

\begin{lemma} \label{extadj} 
For each $p \geq 0$, the inclusion functor between $\infty$-categories 
$$\Fun^{W_n^\h} \bl([p], \mathbf{\Theta}_n^{\Act, \h}\br) 
\hra \Fun^{W_n}\bl([p], \mathbf{\Theta}_n^\Act \br)$$
 induces an equivalence between their classifying spaces
$$ \B \Fun^{W_n^\h} \bl([p], \mathbf{\Theta}_n^{\Act, \h}\br) \xra{\simeq} 
\B\Fun^{W_n}\bl([p], \mathbf{\Theta}_n^\Act \br).$$
\end{lemma}

\begin{proof} 
First observe that we can describe the subcategory $W_n^\h$ of $W_n$ 
as the following pullback of categories over $\mathbf{\mathbf{\Theta}}_n^{\Act, \h}$
\[ \begin{tikzcd}
W_n^\h \pb \ar[hookrightarrow]{d} \ar[hookrightarrow]{r} & W_n \ar[hookrightarrow]{d} \\
\mathbf{\mathbf{\Theta}}_n^{\Act, \h} \ar[hookrightarrow]{r} & \mathbf{\mathbf{\Theta}}_n^\Act 
\end{tikzcd} \]
where we recall that $W_n$ consists of all the same objects as 
$\mathbf{\Theta}_n^\Act$ and all those morphisms that induce bijections on
the sets of leaves, and $W_n^\h$ is the full subcategory of $W_n$ consisting of only 
those trees that are healthy. 

In Lemma~\ref{adj} we showed that the inclusion functor $W^\h_n \hra W_n$ is a right adjoint. 
The reader may observe that no where in the proof did we use that the 
morphisms of $W^\h_n$ and $W_n$ induce bijections between their sets of leaves. 
Thus, Lemma~\ref{adj} immediately extends to an adjunction between 
$\mathbf{\Theta}_n^{\Act, \h}$ and $\mathbf{\Theta}_n^\Act$ whose right adjoint is given by inclusion. 
Further, observe that the unit transformation of this adjunction is given by morphisms in $W_n$. 
Indeed, for each tree $T \in \mathbf{\Theta}_n^\Act$, the morphism assigned to $T$ 
by the unit is $T \xra{\epsilon_T} P_n(T)$, which, in particular, induces a bijection on the leaves, 
and is thus, in $W_n$. In identifying that the unit of the right adjoint 
$\mathbf{\Theta}_n^{\Act, \h} \hra \mathbf{\Theta}_n^\Act$  is given by morphisms in $W_n$, 
we may extend this adjunction to an adjunction between 
$\Fun^{W_n^\h}\bl([p], \mathbf{\Theta}_n^{\Act, \h}\br)$ and 
$\Fun^{W_n}\bl([p], \mathbf{\Theta}_n^\Act \br)$ whose right adjoint is inclusion. 

Corollary 2.1.28 in \cite{AMG} states that the classifying space of 
an adjunction is an equivalence of spaces. 
Thus, upon taking the classifying space of the right adjoint  
$$\Fun^{W_n^\h} \bl([p], \mathbf{\Theta}_n^{\Act, \h}\br) 
\hra \Fun^{W_n}\bl([p], \mathbf{\Theta}_n^\Act \br),$$ 
there results the desired equivalence of spaces. 
\end{proof}

We need another lemma before commencing with the proof of Corollary~\ref{locexit}.
First note that the inclusion of the subcategory 
$\mathbf{\Theta}_n^\exit \hra \mathbf{\Theta}_n^{\Act, \h}$ 
together with the induced inclusion of the respective subcategories $(W_n^\h)_{\neq \emptyset} \hra W_n^\h$ 
guarantees that for each $p \geq 0$, the induced map 
$ \Fun^{(W_n^\h)_{\neq \emptyset}} \bl([p], \mathbf{\Theta}_n^\exit \br) 
\hra \Fun^{W_n^\h}\bl([p], \mathbf{\Theta}_n^{\Act, \h}\br)$ 
is also an inclusion of a $\infty$-subcategory. 
Towards the proof of Corollary~\ref{locexit} we need that the map induced 
between the classifying spaces of this inclusion is, in particular, a monomorphism. 
This is articulated by the following lemma. 

\begin{lemma} \label{mono3} For each $p \geq 0$ the inclusion functor 
$$\Fun^{(W_n^\h)_{\neq \emptyset}} \bl([p], \mathbf{\Theta}_n^\exit \br) 
\hra \Fun^{W_n^\h}\bl([p], \mathbf{\Theta}_n^{\Act, \h}\br)$$
 induces a monomorphism between classifying spaces
$$ \B \Fun^{(W_n^\h)_{\neq \emptyset}} \bl([p], \mathbf{\Theta}_n^\exit \br) 
\hra \B\Fun^{W_n^\h}\bl([p], \mathbf{\Theta}_n^{\Act, \h}\br).$$
\end{lemma}

To prove this lemma we need a technical result involving the following notion.

\begin{definition}\label{d1}
A functor $\C \to \D$ between $\infty$-categories is an 
\emph{inclusion of a cofactor} if there is an $\infty$-category $\E$ 
and an equivalence between $\infty$-categories under $\C$
\[ 
\C \coprod \E
~\cong~
\D
~.
\]
\end{definition}

The technical result we need to prove Lemma~\ref{mono3} is as follows.

\begin{lemma}\label{T1}
A functor $\C \xra{F} \D$ is an inclusion of a cofactor if and only if 
$F$ is a monomorphism and for each solid commutative square
\begin{equation} \label{cofact} 
\begin{tikzcd}  \left[ 0 \right] \ar{r} \ar{d}[swap]{\nu} & \C \ar{d}{F} \\
\left[ 1 \right] \ar[dashrightarrow]{ur}{\exists} \ar{r} & \D  \end{tikzcd} 
\end{equation}
for either $\nu := \langle 0 \rangle$ or $\nu := \langle 1 \rangle$, 
there exists a filler.
\end{lemma}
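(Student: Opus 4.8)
\textbf{Proof plan for Lemma~\ref{T1}.}

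The plan is to prove both implications directly from the definition of an inclusion of a cofactor. For the \emph{forward} direction, suppose $\C \xra{F} \D$ is an inclusion of a cofactor, so there is an $\infty$-category $\E$ and an equivalence $\C \coprod \E \cong \D$ under $\C$. Since $F$ is then the canonical inclusion of a cofactor into a coproduct (up to equivalence), it is a monomorphism: the inclusion $\C \hra \C \coprod \E$ is fully faithful, and a fully faithful functor that is injective on equivalence classes of objects is a monomorphism of $\infty$-categories. For the lifting property, fix a solid square as in~(\ref{cofact}). The bottom arrow $[1] \to \D \cong \C \coprod \E$ selects a morphism $x \to y$ in $\D$; because $\D$ is a coproduct, both $x$ and $y$ lie in the same cofactor ($\C$ or $\E$), for there are no morphisms between the two cofactors. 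If the top arrow $[0] \to \C$ picks out an object lying (via $F$) in the $\C$-cofactor, then the entire morphism $x\to y$ lies in the image of $F$, and since $F$ is fully faithful we obtain a (unique) lift $[1] \to \C$. The choice $\nu := \langle 0\rangle$ handles the case where the source is constrained; $\nu := \langle 1 \rangle$ handles the target. In either case one of the two lifts exists, which is exactly what the statement asks.

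For the \emph{converse}, assume $F$ is a monomorphism satisfying the stated lifting property. The idea is to build $\E$ as the ``complementary'' full $\infty$-subcategory of $\D$: let $\E \subset \D$ be the full $\infty$-subcategory spanned by those objects of $\D$ not equivalent to an object in the image of $F$. I would then show the canonical functor $\C \coprod \E \to \D$ is an equivalence. Essential surjectivity is immediate from the construction of $\E$ together with the fact that $F$, being a monomorphism, identifies $\C$ with a full $\infty$-subcategory of $\D$ (so every object of $\D$ is either in the image of $F$ or in $\E$). Fullness and faithfulness amount to showing there are no morphisms in $\D$ between an object in the image of $F$ and an object of $\E$, in either direction — and this is precisely where the lifting property is used: a morphism $x \to y$ with $x$ in the image of $F$ would, by the $\nu := \langle 0\rangle$ case, lift to a morphism in $\C$, forcing $y$ to be equivalent to an object in the image of $F$, hence $y \notin \E$; symmetrically, a morphism $x \to y$ with $y$ in the image of $F$ lifts by the $\nu := \langle 1 \rangle$ case, forcing $x$ into the image of $F$. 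Thus the only morphisms are within $\C$ or within $\E$, so the mapping spaces of $\D$ decompose as those of $\C \coprod \E$, and since $F$ is full and faithful onto its image the comparison functor is fully faithful. Combined with essential surjectivity, $\C \coprod \E \cong \D$ under $\C$.

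The step I expect to be the main obstacle is the careful handling of the mapping-space argument in the converse: one must argue model-independently that ``no morphisms between the cofactors'' together with $F$ a monomorphism genuinely yields an equivalence $\C \coprod \E \xra{\simeq} \D$, rather than merely a bijection on objects and on $\pi_0$ of mapping spaces. Concretely, I would verify that for objects $c \in \C$ and $e \in \E$ the mapping spaces $\Map_\D(Fc, e)$ and $\Map_\D(e, Fc)$ are empty (not just $\pi_0$-trivial), which follows because any point of such a mapping space is a morphism $[1] \to \D$ to which the lifting property applies, producing a contradiction with $e \in \E$; and that $\Map_\D(Fc, Fc')$ agrees with $\Map_\C(c,c')$, which is the monomorphism (full faithfulness) hypothesis on $F$. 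Once these mapping-space identifications are in place, the decomposition of $\D$ as a coproduct is formal, since a coproduct of $\infty$-categories is detected precisely by its objects partitioning into two classes with no morphisms between them and full sub-mapping-spaces on each class.
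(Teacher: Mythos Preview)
Your proposal is correct and follows essentially the same approach as the paper: in both directions you argue exactly as the paper does, defining $\E$ as the full $\infty$-subcategory on objects not equivalent to anything in the image of $F$ and using the lifting property to rule out morphisms between the two cofactors, while for the other implication using that a morphism in a coproduct has both endpoints in a single cofactor. The only cosmetic difference is that you treat the implications in the opposite order and spell out the mapping-space verification more explicitly than the paper does.
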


\begin{proof}
First, notice that if $F$ is a monomorphism and (\ref{cofact}) is satisfied 
(with the two possible lifts), then $\C\xra{F} \D$ is fully faithful.  
Consider the full $\infty$-subcategory $\E\subset \D$ consisting of those 
objects that are not isomorphic to objects in the image of $\C\to \D$.
Consider the canonical functor 
\[
\C\coprod \E 
\longrightarrow
\D
~,
\] 
which is canonically under $\C$. 
By design, this functor is essentially surjective, and fully faithful.
This established the implication that $F$ being a monomorphism 
and satisfying (\ref{cofact}) implies $\C\xra{F} \D$ is an inclusion of a cofactor.

We now show the converse. 
Suppose there is an $\infty$-category $\E$ together with 
an equivalence $\C\coprod \E \simeq \D$ under $\C$.  
Consider a solid diagram
\[  \begin{tikzcd}  \left[ 0 \right] \ar{r} \ar{d}[swap]{\nu} & \C \ar{d}{} \\
\left[ 1 \right] \ar[dashrightarrow]{ur}{\exists} \ar{r} &  \C \coprod \E  \end{tikzcd} \]
The functor $[0]\xra{\nu} [1]$ has the feature that every object in $[1]$ 
admits a morphism to or from an object in the image of $\nu$.  
It follows that there is a unique filler, as desired.
\end{proof}

We are now ready to prove Lemma~\ref{mono3}.

\begin{proof}[Proof of Lemma~\ref{mono3}] 
First we need to verify that the functor  
$$ \Fun^{(W_n^\h)_{\neq \emptyset}} \bl([p], \mathbf{\Theta}_n^\exit\br) 
\hra \Fun^{W_n^\h}\bl([p], \mathbf{\Theta}_n^{\Act, \h}\br)$$ 
is an inclusion of a cofactor. 
In other words, we will show that this functor is a monomorphism and satisfies (\ref{cofact}). 

Note that because  $ \Fun^{(W_n^\h)_{\neq \emptyset}} \bl([p], \mathbf{\Theta}_n^\exit\br) 
\hra \Fun^{W_n^\h}\bl([p], \mathbf{\Theta}_n^{\Act, \h}\br)$ 
is an inclusion of $\infty$-categories, it is in particular a monomorphism.

Similar to Observation~\ref{pb}, it is straightforward to verify that for each 
$p \geq 0$ the following diagram of $\infty$-categories
\begin{equation} \label{pullback1} 
\begin{tikzcd}
\Fun^{(W_n^\h)_{\neq \emptyset}}\bl([p], \mathbf{\Theta}_n^\exit\br) \pb \ar{r}{}\ar{d}[swap]{ } 
& \Fun^{(W_n^\h)_{\neq \emptyset}}\bl(\{1-p<p\}, \mathbf{\Theta}_n^\exit\br) \ar{d}{ } \\
\Fun^{(W_n^\h)_{\neq \emptyset}}\bl(\{0<\cdots < p-1\}, \mathbf{\Theta}_n^\exit\br) \ar{r}{ } 
& \Fun^{(W_n^\h)_{\neq \emptyset}}\bl(\{p-1\}, \mathbf{\Theta}_n^\exit\br).
\end{tikzcd} 
\end{equation}
is pullback, as it is the diagram obtained by just replacing 
$\mathbf{\Theta}_n^\exit$ with $\mathbf{\Theta}_n^{\Act, \h}$. 
Thus, to verify the diagram (\ref{cofact}) for our situation, namely that
 \begin{equation} 
 \begin{tikzcd} \label{now} 
 \left[ 0 \right] \ar{r} \ar{d}[swap]{\nu} 
 &  \Fun^{(W_n^\h)_{\neq \emptyset}} \bl([p], \mathbf{\Theta}_n^\exit\br) \ar[hookrightarrow]{d}{} \\
\left[ 1 \right] \ar[dashrightarrow]{ur}{\exists} \ar{r} 
&  \Fun^{W_n^\h} \bl([p], \mathbf{\Theta}_n^{\Act, \h}\br) 
 \end{tikzcd} 
 \end{equation}
 is satisfied for all $p \geq 0$, it suffices to show for the cases for $p=0,1$. 
 Indeed, it is straightforward to verify that the cases $p=0,1$ 
 imply each $p \geq 0$ case upon applying the universal property of pullback 
 from (\ref{pullback1}) and the diagram obtained by replacing $\mathbf{\Theta}_n^\exit$ 
 with $\mathbf{\Theta}_n^{\Act, \h}$  in (\ref{pullback}). 

Both cases, $p=0,1$, come down to the following observation. 
Each morphism in $W^\h_n$ between objects in $\mathbf{\Theta}_n^{\Act, \h}$ 
is a morphism in $\mathbf{\Theta}_n^\exit$. 
The root reason for this is that surjections enjoy the `2 out of 3' property. 
Namely, for any commutative triangle of morphisms among sets in which 
two of the morphisms are surjections, the third map is necessarily a surjection as well. 
For our situation, any morphism $T \xra{f} T'$ in $W_n^\h$ yields 
a bijection between the sets of leaves, $ \gamma_n(f): \gamma_n(T') \xra{\cong} \gamma_n(T)$. 
For any $1 \leq i \leq n-1$, in applying the natural transformation 
$\epsilon$ from Observation~\ref{tr-factor}, whose value on $T$ 
is the natural map between sets of leaves $\gamma_n(T) \xra{\epsilon_T} \gamma_i(\tr_i(T))$ 
induced by the structure of $T$, we obtain the following diagram among sets: 
\[ \begin{tikzcd} 
\gamma_n(T') \ar{r}{\cong} \ar{d}[swap]{\epsilon_{T'}} & \gamma_n(T) \ar{d}{\epsilon_T} \\
\gamma_i(\tr_i(T')) \ar{r}{\gamma_i(\tr_i(f))} & \gamma_i(\tr_i(T)). 
\end{tikzcd} \]
Observe that because $T$ and $T'$ are healthy trees, 
both $\epsilon_T$ and $\epsilon_{T'}$ are surjections. 
Then, by the `2 out of 3' property, $\gamma_i(\tr_i(T')) \xra{\gamma_i(\tr_i(f))} \gamma_i(\tr_i(T))$ 
is a surjection. 
Such a surjection at each $i$ guarantees that the image of $f$ under the functor 
$\mathbf{\Theta}_n^\Act \ra \Fun\bigl(\{1< \cdots < n\}, \Fin^\op\bigr)$  lands in 
$ \Fun\Bigl(\{1< \cdots < n\}, \bl(\Fin_{\neq \emptyset}^\surj \br)^\op\Bigr)$ 
and is thus, a morphism in $\mathbf{\Theta}_n^\exit$. 
Using this observation, we now verify (\ref{now}) for the cases $p=0,1$.

For $p=0$, the desired lift in
 \[ \begin{tikzcd}  
 \left[ 0 \right] \ar{r}{\langle T \rangle} \ar{d}[swap]{\langle 0 \rangle } 
 &  (W_n^\h)_{\neq \emptyset} \ar[hookrightarrow]{d}{} \\
\left[ 1 \right] \ar[dashrightarrow]{ur}{\exists} \ar{r}[swap]{\langle T \xra{f} T' \rangle} 
&  W_n^\h  \end{tikzcd} \]
is given  by selecting out the morphism $T \xra{f} T'$, 
which is in $\mathbf{\Theta}_n^\exit$ because each morphism in 
$W^\h_n$ between objects in $\mathbf{\Theta}_n^{\Act, \h}$ 
is a morphism in $\mathbf{\Theta}_n^\exit$, as previously discussed. 
A similar argument yields a lift for the square whose downward arrow on the left is $\langle 1 \rangle$. 

For $p=1$, the desired lift in
 \[ \begin{tikzcd}  
 \left[ 0 \right] \ar{r} \ar{d}[swap]{\langle 0 \rangle } 
 &  \Fun^{(W_n^\h)_{\neq \emptyset}} \bl([1], \mathbf{\Theta}_n^\exit\br) \ar[hookrightarrow]{d}{}  \\
\left[ 1 \right] \ar[dashrightarrow]{ur}{\exists} \ar{r}{\alpha} 
&  \Fun^{W_n^\h} \bl([1], \mathbf{\Theta}_n^{\Act, \h}\br)  \end{tikzcd} \]
is again given by $\alpha$, which is straightforward to check upon applying 
the fact discussed above, that each morphism in $W^\h_n$ between objects 
in $\mathbf{\Theta}_n^{\Act, \h}$ is a morphism in $\mathbf{\Theta}_n^\exit$. 
A similar argument applies for  the square whose downward arrow on the left is $\langle 1 \rangle$. 

Thus, $\Fun^{(W_n^\h)_{\neq \emptyset}} \bl([p], \mathbf{\Theta}_n^\exit\br) 
\hra \Fun^{W_n^\h}\bl([p], \mathbf{\Theta}_n^{\Act, \h}\br)$ 
is an inclusion of a cofactor, meaning the target is equivalent to a coproduct, 
one term of which is the source. 
Because the classifying space respects colimits, the induced map between classifying spaces 
$\B \Fun^{(W_n^\h)_{\neq \emptyset}} \bl([p], \mathbf{\Theta}_n^\exit\br) 
\hra \B \Fun^{W_n^\h}\bl([p], \mathbf{\Theta}_n^{\Act, \h}\br)$ is, in particular, still a monomorphism. 
\end{proof}

Finally we are equipped to prove the main result of this section, Corollary~\ref{locexit}.

\begin{proof}[Proof of Corollary~\ref{locexit}]
Just like in the proof of Lemma~\ref{preloc}, we employ Theorem~\ref{BcSS} 
which states that if the classifying space of 
$\Fun^{(W_n^\h)_{\neq \emptyset}}\bl([\bullet], \mathbf{\Theta}_n^\exit\br)$ 
is a complete Segal space, then it is equivalent to the localization of 
$\mathbf{\Theta}_n^\exit$ on $(W_n^\h)_{\neq \emptyset}$. 
First we need to show that there is an equivalence of simplicial spaces 
from the classifying space of 
$\Fun^{(W_n^\h)_{\neq \emptyset}}\bl([\bullet], \mathbf{\Theta}_n^\exit\br)$ 
to $\Hom_{\Cat_\infty}\Bigl([\bullet], \Exit\bigl(\Ran(\R^n)\bigr)\Bigr)$. 
Observe the following diagram of simplicial spaces
\begin{equation} \label{diagram} 
\begin{tikzcd}[row sep=25, column sep=10]
\B \Fun^{(W_n^\h)_{\neq \emptyset}}\bl([\bullet], \mathbf{\Theta}_n^\exit\br) 
\ar[hookrightarrow]{r}{Lem.~\ref{mono3}} \ar[dashrightarrow]{d} 
& \B \Fun^{W^\h_n}\bl([\bullet], \mathbf{\Theta}_n^{\Act, \h}\br) \ar{r}{\simeq} 
& \B \Fun^{W_n}\bl([\bullet], \mathbf{\Theta}_n^\Act\br) \ar{ld}{\simeq} \\
 \Hom_{\Cat_\infty}\Bigl([\bullet], \Exit\bigl(\Ran(\R^n)\bigr)\Bigr) 
 \ar[hookrightarrow]{r} & \Hom_{\Cat_\infty}\Bigl([\bullet], \Exit\bigl(\Ranu(\R^n)\bigr)\Bigr) & 
 \end{tikzcd} 
 \end{equation}
which we explain as follows. 
The top horizontal arrow on the left is a monomorphism by Lemma~\ref{mono3},
the top horizontal arrow on the right is an equivalence by Lemma~\ref{extadj}, and
the downward arrow on the right is an equivalence by Lemmas \ref{class1} and \ref{class2}. 
Because $\Exit\bigl(\Ran(\R^n)\bigr)$ is a $\infty$-subcategory 
of $\Exit\bigl(\Ranu(\un{\R}^n)\bigr)$, the bottom horizontal arrow is a monomorphism. 
Lastly, to define the induced downward functor on the left of (\ref{diagram}), 
first recall the definition of $\Exit(\Ran(M))$ for $M=\R^n$ as an $\infty$-subcategory of $\Exit(\Ranu(\R^n))$ 
given as a pullback over surjective finite sets (Def.~\ref{little exit}). 
The downward arrow on the righthand side of (\ref{diagram}) 
is induced by the unique (up to a contractible space of choices) 
functor given by the universal property of pullback in the following diagram of $\infty$-categories
\begin{equation} \label{cube} 
\begin{tikzcd}[row sep=15, column sep=-19]
 & \mathbf{\Theta}_n^\Act \ar{rr} \ar{dd} && \Exit\bigl(\Ranu(\R^n)\bigr) \ar{dd} \\
\mathbf{\mathbf{\Theta}}_n^\exit \ar[hookrightarrow]{ur}{} 
\ar[dashrightarrow,crossing over]{rr}{\exists !} \ar{dd} 
&& \Exit\bigl(\Ran(\R^n)\bigr) \ar[hookrightarrow]{ur}{} & \\
&  \Fun\bigl(\{1< \cdots < n\}, \Fin^\op\bigr) \ar{rr}[xshift=2ex]{\ev_n}   &&  \Fin^\op    \\
 \Fun\Bigl(\{1< \cdots < n\}, \bl(\Fin_{\neq \emptyset}^\surj\br)^\op\Bigr) 
 \ar{rr}{\ev_n} \ar[hookrightarrow]{ur} &&  \bl(\Fin_{\neq \emptyset}^\surj\br)^\op. 
 \ar[hookrightarrow]{ur} \ar[from=uu,crossing over]  & 
\end{tikzcd} 
\end{equation}
Note that the top, back horizontal functor is the localization from Theorem~\ref{loc} 
and the square on the right wall is the definition of $\Exit\bigl(\Ranu(\R^n)\bigr)$ 
as pullback (Def. \ref{little exit}). 
Also note that we apply the universal property of the classifying space to 
ensure that the unique functor in (\ref{cube}) from $\mathbf{\Theta}_n^\exit$ 
to $\Exit\bigl(\Ran(\R^n)\bigr)$ induces the fuctor
\begin{equation} \label{kappa}
\B \Fun^{(W_n^\h)_{\neq \emptyset}}\bl([\bullet], \mathbf{\Theta}_n^\exit\br) 
\xra{\kappa} \Hom_{\Cat_\infty}\Bigl([\bullet], \Exit\bigl(\Ran(\R^n)\bigr)\Bigr)
\end{equation}
on the lefthand side of (\ref{diagram}).
We wish to show that $\kappa$ is an equivalence. 
First, observe that monomorphisms enjoy the `2 out of 3' property 
(Obs.~5.4, \cite{AFR2}) and thus, $\kappa$ is a monomorphism. 

All that remains to be shown then is that $\kappa$ induces a surjection 
on path components between each space given by the value on $[p]$ 
$$\B \Fun^{(W_n^\h)_{\neq \emptyset}}\bl([p], \mathbf{\Theta}_n^\exit\br) 
\xra{\kappa} \Hom_{\Cat_\infty}\Bigl([p], \Exit\bigl(\Ran(\R^n)\bigr)\Bigr).$$ 
Recall in the beginning of the proof of Lemma~\ref{class2} 
that we show $\B \Fun^{W_n}\Bigl([\bullet], \Exit\bigl(\Ranu(\R^n)\bigr)\Bigr)$ 
satisfies the Segal condition. 
Observe that the same argument applies to  
$\B \Fun^{(W_n^\h)_{\neq \emptyset}}\bl([\bullet], \mathbf{\Theta}_n^\exit\br)$ 
to show that it, too, satisfies the Segal condition. 
Thus, to show $\kappa$ is a surjection on path components, 
it suffices to show it for the cases $p=0,1$. 

For the case $p=0$, we wish to show that the map of spaces induced by $\kappa$
$$\B (W_n^\h)_{\neq \emptyset} \xra{\kappa} \ds \coprod_{r \geq 1} \Conf_r(\R^n)_{\Sigma_r}$$ 
is a surjection on path components. 
This follows from Lemma~\ref{pis0}. Indeed, in the proof we showed 
a homotopy equivalence between the classifying space of 
$W^\h_n$ and the coproduct 
$\ds \coprod_{r \geq 0}\Conf_r(\R^n)_{\Sigma_r}$ (\ref{peas}, \ref{carrots}).
The only difference in this case is that $r=0$ is excluded.

For the case $p=1$, we wish to show that the map of spaces induced by $\kappa$
$$\B \Fun^{(W_n^\h)_{\neq \emptyset}}\bl([1], \mathbf{\Theta}_n^\exit\br) 
\xra{\kappa} \mor\Bigl(\Exit\bigl(\Ran(\R^n)\bigr)\Bigr)$$ 
is a surjection on path components. 
Let $\cylr(S \xra{f} T) \xhra{E} \R^n \times \Delta^1$ be a point in the target. 
Recall from (\ref{cube}) that $\kappa$ is determined by 
$\mathbf{\Theta}_n^\exit \hra \mathbf{\Theta}_n^\Act 
\simeq \Exit\bigl(\Ranu(\un{\R}^n)\bigr)$ over  
$ \Fun\Bigl(\{1< \cdots < n\}, \bl(\Fin_{\neq \emptyset}^\surj\br)^\op\Bigr)$. 
Thus, we wish to identify a point in the fiber over $E$ under $\kappa$ 
by identifying a morphism in $\Exit\bigl(\Ranu(\un{\R}^n)\bigr)$ over $\bl(\Fin^\surj\br)^\op$. 
Such a morphism is precisely obtained by naming the projection data of $E$, namely,

\[ \begin{tikzcd} 
\cylr(S \xra{f} T) \ar[twoheadrightarrow]{d}[swap]{\pr_{<n}} 
\ar[hookrightarrow]{r}{E} & \R^n \times \Delta^1 \ar[twoheadrightarrow]{d} \\
\cylr(\pr_{<n}(S) \ra \pr_{<n}(T)) \ar[twoheadrightarrow]{d} \ar[hookrightarrow]{r} 
& \R^{n-1} \times \Delta^1 \ar[twoheadrightarrow]{d} \\
\vdots \ar[twoheadrightarrow]{d} & \vdots \ar[twoheadrightarrow]{d} \\
\cylr(\pr_1(S) \ra \pr_1(T)) \ar[hookrightarrow]{r} & \R \times \Delta^1 
\end{tikzcd} \]
the value of which under the functor 
$\Exit\bigl(\Ranu(\un{\R}^n)\bigr) \ra  \Fun\bl(\{1 < \cdots < n\}, \Fin^\op\br)$ 
factors through  $\Fun\Bigl(\{1 < \cdots < n\}, \bl(\Fin^\surj\br)^\op\Bigr)$ 
precisely because $\Exit\bigl(\Ran(\R^n)\bigr)$ is naturally over 
$\Fun\Bigl(\{1 < \cdots < n\}, \bl(\Fin^\surj\br)^\op\Bigr)$ 
as we can see by its definition as pullback (Def. \ref{little exit}). 
Thus, this morphism in $\Exit\bigl(\Ranu(\un{\R}^n)\bigr)$ 
defines a morphism in $\mathbf{\Theta}_n^\exit$ whose value under $\kappa$ is $E$. 
Thus, $\kappa$ for the case $p=1$ is a surjection on path components which, 
as previously argued, implies that $\kappa$ (\ref{kappa}) is an equivalence of simplicial spaces. 

The target of $\kappa$, $\Hom_{\Cat_\infty}\Bigl([\bullet], \Exit\bigl(\Ran(\R^n)\bigr)\Bigr)$
is, in particular, a complete Segal space, and hence, 
$\B \Fun^{(W_n^\h)_{\neq \emptyset}}\bl([\bullet], \mathbf{\Theta}_n^\exit \br)$ 
is a complete Segal space as well.
As such, the hypothesis of Theorem~\ref{BcSS} is satisfied 
and we establish that $\mathbf{\Theta}_n^\exit$ localizes on 
$(W_n^\h)_{\neq \emptyset}$ to $\Exit\bigl(\Ran(\R^n)\bigr)$

Lastly, to verify that the localization of Corollary~\ref{locexit} 
is over $(\Fin^\surj)^\op$, recall that the equivalence $\kappa$
$$\B \Fun^{(W_n^\h)_{\neq \emptyset}}\bl([\bullet], \mathbf{\Theta}_n^\exit\br) 
\simeq \Hom_{\Cat_\infty}\Bigl([\bullet], \Exit\bigl(\Ran(\R^n)\bigr)\Bigr)$$ 
is induced by the functor $\mathbf{\Theta}^\exit_n \ra \Exit\bigl(\Ran(\R^n)\bigr)$ 
from (\ref{cube}), which is in particular over $\bl(\Fin^\surj \br)^\op$.
This implies that the localization, too, is over $\bl(\Fin^\surj\br)^\op$. 
\end{proof}

\appendix
\section{Stratified spaces} \label{stratapp}
Lurking in the background of this work with only a few important appearances 
deep within the proof of our main result is the notion of a stratified space. 
This section is devoted to reviewing stratified spaces in order to develop 
two important examples,
namely the stratification of the configuration space of $r$ 
unordered points in $\R^n$ by its Fox-Neuwirth cells
and the stratification of the Ran space of a nonempty smooth connected manifold $M$ by cardinality.
Along the way we review two notions of stratified spaces, namely
topologically and Whitney stratified spaces. 
(Note that we touch on conical and conically smooth stratified spaces in the next section (\S\ref{exit sec}).)
A key result of this section is that the stratification of the configuration space 
$\R^n$ by its Fox-Neuwirth cells is Whitney stratified.
We begin with the simplest version of a stratified space as follows.

\begin{definition}[Def. A.5.1, \cite{Lu1} \& Def. 2.1.10, \cite{AFT}] \label{strat} 
\begin{itemize} \item[] {}
\item (Topological structure on a poset.) 
Let $P$ be a partially ordered set. We equip $P$ with the topology 
that defines $U \subset P$ to be open if and only if it is \emph{closed upwards}: 
that is, if $ \ds a \in U$, then every $b \geq a$ is also in $U$. 

\item  A \emph{topologically stratified space} $X \xrightarrow{\sigma} P$ is a paracompact, 
Hausdorff topological space $X$ together with a poset $\ds P$ and a continuous map 
$\ds \sigma$ such that for each $p \in P$, the fiber over $p$ is nonempty and connected. \\
The fiber over $p \in P$ is called the \emph{$p$-stratum}, which we denote by $X_p$.

\item A \emph{stratified map} from $\ds \left(X \ra P\right)$ to $\left(Y \ra Q\right)$ 
is a continuous map $X \xra{f} Y$ such that 

\[ \ds 
\begin{tikzcd}
X \arrow{r}{f} \arrow{d} &  Y \arrow{d} \\ 
 P \arrow{r}{ } & Q 
\end{tikzcd}
\]
is a commutative diagram of topological spaces.
\end{itemize} \end{definition}

We will denote a stratified space $X \xra{S} P$ by its underlying topological space $X$, 
if we expect $S$ and $P$ to be understood. 

\begin{example}[Ex. 1.2.9, \cite{AFT}] \label{standard} 
The standard stratification of the topological $k$-simplex 
$$ \ds \Delta^k := \big\{\left(t_0,...,t_k\right) \in [0,1]^{k+1} 
\mid \ds \Sigma_{i=0}^k t_i =1 \big\} \rightarrow [k]:=\{0< \cdots < k\}$$
 is given by $$\ds \left(t_0,...,t_k\right) \mapsto \text{max} \{i \mid t_i \neq 0\}.$$
This stratification for $\Delta^0$, $\Delta^1$, and $\Delta^2$ 
is depicted below, from left to right, respectively:

\centering 
\begin{tikzpicture}[scale=1]

\path[fill=green] (4,0) -- (5,1) -- (6,0) -- (4,0); 
\draw[orange, ultra thick] (4,0) -- (5,1);

\draw[orange, ultra thick] (1,0) -- (2,0);
\filldraw[black] (-1,0) circle (2pt) node {};
\filldraw[black] (1,0) circle (2pt) node {};
\filldraw[black] (4,0) circle (2pt) node {};

\path 
(-1,.1) node[anchor=south] () {{\color{black} $0$}}
(1.73,.1) node[anchor=south] () {{\color{orange} $1$}}
(1,.1) node[anchor=south] () {{\color{black} $0$}}
(3.9,0) node[anchor=east] () {{\color{black} $0$}}
(4.4,.5) node[anchor=south] () {{\color{orange} $1$}}
(5.8,.3) node[anchor=south] () {{\color{green} $2$}};

\end{tikzpicture}
\end{example}

\subsection{The Fox-Neuwirth cell stratification} \label{FNsec}
We develop an important example of a stratified space,
namely the stratification of the configuration space of $\R^n$ by its Fox-Neuwirth cells.

Let $A$ be a finite, nonempty set, 
and $\Conf_A\left(\R^n\right)$ denote the configuration space of $A$-labeled points in $\R^n$.
In \cite{FN} Fox-Neuwirth decompose $\Conf_A\left(\R^n\right)$ into open cells
according to coordinate-coincidence.
Not only does this decomposition stratify $\Conf_A\left(\R^n\right)$, 
but the stratification is particularly nice as we prove in Theorem~\ref{strat thm}.
We begin by defining the stratifying poset of $\Conf_A\left(\R^n\right)$ 
following Ayala-Hepworth in \cite{AH}

\begin{definition}
	\begin{itemize}
		\item[]
		\item An \emph{$n$-ordering} on $A$ is a pair $\left(S,\sigma\right)$ consisting of 
		a healthy planar level tree (Def. \ref{tree}) $S$ 
		of height $n$ and a bijection $\sigma$ between $A$ and level-$n$ leaves of $S$. 
		We will usually leave the $\sigma$ implicit and denote an $n$-ordering by simply $S$.
		\item The leaves of $S$ inherit a canonical linear order, 
		which induces a linear ordering on $A$ that we denote by $\leq_S$. 
		\item Given $a, b \in A$, the \emph{branching level} of $a$ and $b$, 
		denoted $b_S\left(a,b\right)$, is defined to be the level of the vertex at which the directed paths 
		from $a$ and $b$ to the root first meet. 
		\item The \emph{poset of $n$-orderings on $A$}, 
		denoted $nOrd\left(A\right)$, is the poset of $n$-orderings on $A$. 
		A morphism $S \to T$ exists if and only if the following condition, 
		called the \emph{branching condition}, holds:
		for all $a, b \in A$, $b_T\left(a,b\right) \leq b_S\left(a,b\right)$, 
		with equality only if the orderings of $a$ and $b$ under $<_T$ and $<_S$ agree. 
	\end{itemize}
\end{definition}

\begin{definition}[Lem. 13, \cite{AH}] \label{conf image}
	For an injection $\phi: A \hookrightarrow \R^n$, 
	define $S_\phi$ to be a tree in $nOrd\left(A\right)$ with the following properties:
	\begin{itemize}
		\item The linear order on the set of leaves labeled by $A$ 
		is given by the lexiographic order on $A$ in $\R^n$. 
		\item For all $a,b \in A$, the branching number $b\left(a,b\right)$ 
		is the largest integer $i$ such that $\phi\left(a\right)_i=\phi\left(b\right)_i$. 
	\end{itemize}
\end{definition}

\begin{example}  Figure~\ref{trees1} depicts a tree $S_\phi$ 
and its associated injection $\phi: A=\{1,2,3,4\} \hookrightarrow \R^2$.

	 \begin{figure}[ht]   
	\begin{tikzpicture}[scale=.5]
		\draw[black] (-1,0) -- (4,0);
		\draw[black] (0,-1) -- (0,4);
		
		\filldraw[black] (1.5,1.75) circle (4pt);
		\filldraw[black] (3,1) circle (4pt);
		\filldraw[black] (3,2.8) circle (4pt);
		\filldraw[black] (3,2.3) circle (4pt);
		
		\path
		(1.5,1.75) node[label=left:2] () {}
		(3,1) node[label=right:3] () {}
		(3,2.8) node[label=right:4] () {}
		(3,2.3) node[label=right:1] () {};
		
		\draw[<->] (6.5,2) -- (7.5, 2);
	
		\filldraw[black] (10,1.5) circle (4pt) {};
		\filldraw[black] (11,0) circle (4pt){};
		\filldraw[black] (12,1.5) circle (4pt) {};
		\filldraw[black] (12,3) circle (4pt) {};
		\filldraw[black] (11,3) circle (4pt){};
		\filldraw[black] (10,3) circle (4pt){};
		\filldraw[black] (13,3) circle (4pt){};
		
		\path
		(10,3) node[label=above:2] () {}
		(11,3) node[label=above:3]() {}
		(12,3) node[label=above: 1](){}
		(13,3) node[label=above:4]() {};

		\draw
		(10,1.5) -- (11,0)
		(10,1.5) -- (10,3)
		(11,0) -- (12,1.5)
		(12,1.5) -- (12,3)
		(12,1.5) -- (11,3)
		(12,1.5) -- (13,3);
	\end{tikzpicture}
	\caption{The image of $\phi$ on the left; $S_\phi$ on the right}
	\label{trees1}
	\end{figure}
\end{example}

We stratify the space of configurations of $A$-labeled points 
in Euclidean space over the poset $nOrd\left(A\right)$. 

\begin{lemma} \label{strat lem}
	The map of sets $\nu: \Conf_A\left(\R^n\right) \lra nOrd\left(A\right)$ given by 
	$$ \left(A \overset{\phi}\hookrightarrow\R^n\right) \mapsto S_\phi$$
	where $S_\phi$ is from Definition \ref{conf image}, is a topologically stratified space.
\end{lemma}

\begin{proof}
	In Lemma 13 of \cite{AH} they show that $S_\phi$ is unique. Thus, $\nu$ is well defined. \\
	To show continuity, we will show that the preimage of a closed subset is closed. 
	Take the downward closure of $S_\phi$ in $nOrd\left(A\right)$
	and consider its preimage $\Conf_A\left(\R^n\right)_{\leq S_\phi} \subset \Conf_A\left(\R^n\right)$. 
	Following Definition 11 of \cite{AH}, 
	the preimage consists of all those injections $\phi: A \hra \R^n$ with the following properties: 
	For each pair $a,b \in A$ with $a <_{S_\phi} b$, 
	\begin{itemize}
		\item $\phi\left(a\right)_i = \phi\left(b\right)_i$ for all $i=1,...,b_{S_\phi}\left(a,b\right),$
		\item $\phi\left(a\right)_i \leq \phi\left(b\right)_i$ for $i=b_{S_\phi}\left(a,b\right)+1.$
	\end{itemize}
	From this description, it is evident that the preimage is closed in $\Conf_A\left(\R^n\right)$. 
\end{proof}

It is evident that the strata of $\nu: \Conf_A\left(\R^n\right) \to nOrd\left(A\right)$ 
are precisely the Fox-Neuwirth cells. 
Thus, we make the following definition.

\begin{definition} \label{defFN}
The \textit{Fox-Neuwirth cell stratification} of $\Conf_A\left(\R^n\right)$ 
 is the topologically stratified space 
$\nu: \Conf_A\left(\R^n\right) \lra nOrd\left(A\right)$ defined in Lemma \ref{strat lem}.
\end{definition}

\begin{terminology}
We denote the Fox-Neuwirth cell stratification $\Conf_A\left(\R^n\right) \lra nOrd\left(A\right)$
by $\Conf_A\left(\un{\R}^n\right)$ when we omit the stratifying poset. 
\end{terminology}

\subsubsection{The Fox-Neuwirth cell stratification is Whitney stratified}
There are many stronger notions of stratified spaces used in topology and geometry.
Among these are Whitney stratified spaces due to Mather and Thom \cite{Mat1, Mat2, Th}.
Our immediate goal is to prove that $\Conf_A\left(\un{\R}^n\right)$
is a Whitney stratified space. 
Since $\Conf_A\left(\R^n\right)$ is a subset of Euclidean space, 
we only need the definition of a Whitney stratification of a subset of Euclidean space. 

\begin{definition}[Defs. 2.3 \& 2.5, \cite{NV}] \label{whit}
Let $X, Y$ be smooth submanifolds of $\R^n$, and let $y \in Y$ be a point. 
	\begin{itemize}
		\item (Whitney's condition in $\R^n$.) 
		The pair $\left(X,Y\right)$ is said to satisfy \emph{Whitney's Condition B} at $y$ 
		if for any two sequence of points 
		$\left(x_i\right) \subset X$ and $\left(y_i\right) \subset Y$ both converging to $y$ 
		such that the sequence of tangent planes $T_{x_i}X$ converges to some vector space $\tau$ 
		in the $r$-Grassmannian of $\R^n$ and the sequence of secant lines $x_iy_i$ 
		converges to some line $l$ in the $1$-Grassmannian of $\R^n$, then $l \subset \tau$. 
		\item (Whitney stratification in $\R^n$.)
		A topological stratification $X \to P$ of a subset 
		of Euclidean space is a \textit{Whitney stratification} 
		if each stratum of $X$ is a smooth submanifold of Euclidean space 
		and Whitney's condition B is satisfied at every point in each pair of strata. 
	\end{itemize}
\end{definition}
	
\begin{theorem} \label{strat thm}
	The Fox-Neuwirth cell stratification $\nu: \Conf_A\left(\R^n\right) 
	\lra nOrd\left(A\right)$ is a Whitney stratification. 
\end{theorem}

We need the following lemma to prove Theorem~\ref{strat thm}.

\begin{lemma} \label{strat lemma}
	Let the cardinality of $A$ be $k$.
	Let $S$ be a tree in $nOrd\left(A\right)$ and $x$ be a 
	configuration in the $S$-stratum $\Conf_A\left(\R^n\right)_S$. 
	\begin{enumerate}
	\item \label{uno} The tangent space $T_x\Conf_A\left(\R^n\right)$ to $x$ 
	along the $S$-stratum is canonically identified with
	Euclidean space $\R^{r_S} \subset \R^{nk}$ for some integer $r_S \leq nk$. 
	\item \label{dos} There is a canonical closed embedding 
		$$\Conf_A\left(\R^n\right)_{\leq S} \hookrightarrow T_x\Conf_A\left(\R^n\right)$$
	of the fiber over the downward closure of $S$ in $nOrd\left(A\right)$ 
	into the tangent space at the point $x$ in the $S$-stratum.
	\end{enumerate}
\end{lemma}

\begin{proof}
	Let $S$ be a tree in $nOrd\left(A\right)$ and let $x \in \Conf_A\left(\R^n\right)_S$ 
	be a point in the $S$-stratum.
	We can describe the $S$-stratum explicitly as all those injections $\phi: A \hra \R^n$ 
	with the following properties: 
	For each pair $a,b \in A$ with $a <_{S} b$, 
	\begin{itemize}
		\item $\phi\left(a\right)_i = \phi\left(b\right)_i$ for all $i=1,...,b_{S}\left(a,b\right),$
		\item $\phi\left(a\right)_i < \phi\left(b\right)_i$ for $i=b_{S}\left(a,b\right)+1.$
	\end{itemize}
	This evidently describes an open subset of some Euclidean space $\R^{r_S} \subset \R^{nk}$
	whose dimension $r_S$ is determined by the branching numbers of each pair $a <_S b$. 
	Thus, the tangent space $T_x\Conf_A\left(\R^n\right)_S$ to the point $x$ 
	in the $S$-stratum is canonically identified with $\R^{r_S}$, verifying (\ref{uno}). 
	
	Observe that there is an equality 
		$$\overline{\Conf_A\left(\R^n\right)}_S = \Conf_A\left(\R^n\right)_{\leq S}$$
	between the closure of the $S$-stratum and 
	the fiber $\Conf_A\left(\R^n\right)_{\leq S}$ over the downward closure of $S$ in $nOrd\left(A\right)$.
	This equality is evident from their explicit descriptions -
	see the proof of Lemma~\ref{strat lem} for the description of $\Conf_A\left(\R^n\right)_{\leq S}$.
	In particular this means that the downward closure 
	$\Conf_A\left(\R^n\right)_{\leq S}$ is a closed subset of $\R^{r_S}$.
	The canonical equivalence $T_x\Conf_A\left(\R^n\right) = \R^{r_S}$ yields a canonical closed embedding 
	$\Conf_A\left(\R^n\right)_{\leq S} \hookrightarrow T_x\Conf_A\left(\R^n\right)$, which verifies (\ref{dos}). 
\end{proof}

\begin{observation} \label{strat obs}
	Let $V$ be a vector subspace of a finite dimensional vector space $W$. 
	The limit of a convergent sequence of vector subspaces in $V$ 
	(in the $r$-Grassmannian of $V$) will be a subspace of $V$
	(as opposed to being in $V^\perp$). 
\end{observation}

\begin{proof}[Proof of Theorem~\ref{strat thm}]
	In the proof of Lemma~\ref{strat lemma} we identified that each stratum is an open subset of
	$\R^r \subset \R^{nk}$ for some $r$ depending on the stratum. 
	Hence, each stratum is a smooth submanifold of $\R^{nk}$. 
	We will verify Whitney's condition B (Def. \ref{whit}):
	Let $S$ and $T$ be trees in $nOrd\left(A\right)$ with $T<S$, 
	$\left(x_i\right) \subset \Conf_A\left(\R^n\right)_S$ and $\left(y_i\right) \subset \Conf_A\left(\R^n\right)_T$ 
	be sequences in the $S$- and $T$-strata
	converging to a point $y \in \Conf_A\left(\R^n\right)_T$ in the $T$-stratum
	such that the sequence of tangent spaces $T_{x_i}\Conf_A\left(\R^n\right)_S$ is convergent 
	and the sequence of secant lines $x_iy_i$ converges to a line $l$ in the $1$-Grassmannian. 
	
	The assumption $T < S$ implies that $\Conf_A\left(\R^n\right)_T \subset \Conf_A\left(\R^n\right)_{\leq S}$. 
	Thus, the canonical closed embedding $\Conf_A\left(\R^n\right)_{\leq S} \hookrightarrow T_{x_i}\Conf_A\left(\R^n\right)_S$ 
	from Lemma~\ref{strat lemma} tells us that $y_i \in T_{x_i}\Conf_A\left(\R^n\right)_S$ for each $i$.
	Moreover, $y_i \in T_{x_i}\Conf_A\left(\R^n\right)_S$ implies that each secant line $x_iy_i$ is in $T_{x_i}\Conf_A\left(\R^n\right)_S$.
	
	Translating our tangent spaces $T_{x_i}\Conf_A\left(\R^n\right)_S$ 
	to the origin via their canonical identifications with $\R^{r_S}$
	from Lemma~\ref{strat lemma} yields the situation of Observation~\ref{strat obs}: 
	The sequence of secant lines $x_iy_i$ is a sequence of linear subspaces in $\R^{r_S}$
	converging to some $l$, which must also be in $\R^{r_S}$.
	Lastly, the sequence of tangent spaces to the $x_i$ is the constant sequence of $\R^{r_S}$
	which must converge to $\R^{r_S}$. 
	This verifies Whitney's condition B at any point in any stratum of  
	$\nu: \Conf_A\left(\R^n\right) \lra nOrd\left(A\right)$, 
	which implies that $\nu$ is a Whitney stratified space. 
\end{proof}

We now consider the unordered configuration spaces of Euclidean space and 
argue that Theorem~\ref{strat thm} implies that they, too, are Whitney stratified. 

\begin{observation} \label{toe}
	The action of $\Sigma_A$ on the configuration space 
	given by permuting the labels of the points 
	extends to an action
	on the Fox-Neuwirth stratification of the configuration space 
	by permuting the leaves in $nOrd\left(A\right)$.
	We quotient out by this action as indicated in the following diagram 
	\[ \begin{tikzcd}
		\Conf_A\left(\R^n\right)  \ar{r}{\nu} \ar[swap]{d}{/\Sigma_A} 
		& nOrd\left(A\right) \ar{d}{/\Sigma_A} \\
		\Conf_A\left(\R^n\right)_{\Sigma_A} \ar{r}{\nu_{\Sigma_A}} 
		& nOrd\left(A\right)_{\Sigma_A}
	\end{tikzcd} \]
	to obtain the \textit{Fox-Neuwirth stratification of the unordered 
	configuration space of $\R^n$}, $\Conf_A\left(\un{\R}^n\right)_{\Sigma_A}$.
\end{observation}

The quotient map from the ordered to the unordered configuration space 
is a smooth covering space and is thus, a local diffeomorphism.
 Since the Whitney condition is local, the following corollary to 
 Theorem~\ref{strat thm} is immediate. 

\begin{cor} 
	The unordered Fox-Neuwirth stratification 
	$\nu_{\Sigma_A}: \Conf_A\left(\R^n\right)_{\Sigma_A} \lra nOrd\left(A\right)_{\Sigma_A}$ 
	is a Whitney stratification. 
\end{cor}

\subsection{The Ran space}
There is one other important example of a stratified space
which is central to this work.
Recall the unordered configuration space of $r$ points of a manifold $M$, $\Conf_r\left(M\right)_{\Sigma_r}$.
As the cardinality $r$ varies, the unordered configuration spaces naturally organize together into one space. 
Following that of Beilinson-Drinfeld in \S3.4.1 of \cite{BD}, we define this space as follows.
Let $M$ be a nonempty smooth connected manifold. 

\begin{definition} \label{Ran} 
The \emph{Ran space} of $M$ is the topological space whose underlying set is
$$\ds \Ran\left(M\right) := \{ S \subset M \mid S \shs \tx{is finite and nonempty} \}$$  
and whose topology is the finest for which the maps
$$ \ds  M^r \to \text{Ran}\left(M\right)$$ 
given by $\ds \left(m_1,...,m_r\right) \mapsto \{m_1,...,m_r\}$  are continuous for all $\ds r \geq 1$.
\end{definition}

Famously, the Ran space of a connected manifold is weakly contractable \cite{CN}  
(see \cite{BD} for a pleasantly simple proof). 
However, we can remember the interesting topology of the unordered 
configuration spaces as subspaces of the Ran space by stratifying the Ran space by cardinality. 

\begin{example} \label{natural} 
The Ran space of a connected manifold $M$ 
admits a stratification over the natural numbers by cardinality
$$ \Ran\left(M\right) \ra \N$$ 
assigns to each point $S \subset M$ in $\Ran\left(M\right)$ the cardinality of the underlying set $S$. 

Observe that for $r \in \N$, the $r$-stratum of $\Ran\left(M\right)$ is 
precisely the unordered configuration space $\Conf_r\left(M\right)_{\Sigma_r}$. 
\end{example}

\subsubsection{The unital Ran space} \label{unitalRan}
We call the configuration space of all finite subsets of a manifold $M$, 
including the empty subset, the \textit{unital Ran space}. 
We union the empty set to each open set of $\Ran\left(M\right)$  
to define the topology on the unital Ran space of $M$, 
making this a very pathological space.
It is not even Hausdorff, for example, 
which means it is not a stratified space. 

\begin{remark}
There are three other notable, inequivalent topologies on the unital Ran space \cite{CL},
none of which relate to this work.
Indeed, with any of these other topologies, our main result relating configuration 
spaces of $\R^n$ to the categories $\mathbf{\Theta}_n$ does not hold. 
Our topology on the unital Ran space is
more akin to what Lurie mentions in Remark~5.5.4.12 of \cite{Lu1} in that 
the empty set is in every open set.
\end{remark}

\subsubsection{The Fox-Neuwirth Ran space of $\R^n$} \label{FNRan}
Consider a refinement (Def. \ref{refinement}) 
of the stratification of the Ran space of $\R^n$ by cardinality (Ex. \ref{natural})
by additionally stratifying each unordered configuration space living
as a subset of $\Ran(\R^n)$ by its Fox-Neuwirth cells.
More precisely, let $\un{r}$ denote the set $\{1,...,r\}$.
The \emph{Fox-Neuwirth Ran space of $\R^n$} is the stratified space 
$$\Ran(\R^n) \to  \coprod_{r \geq 1} nOrd\left(\un{r}\right)_{\Sigma_{\un{r}}}$$
determined by both cardinality and the map $\nu_{\Sigma_{\un{r}}}$
which is determined by coordinate-coincidence (Obs. \ref{toe}). 
We denote this stratified space $\Ran(\un{\R}^n)$.

\subsubsection{The Fox-Neuwirth unital Ran space of $\R^n$ does not exist} \label{FNRanu}
Extending the stratification of the Fox-Neuwirth Ran space of $\R^n$ to include the unit
faces the same challenge that we saw with the unital Ran space in \S\ref{unitalRan}.
Since the underlying topological space $\Ranu(\R^n)$ 
is not even a topologically stratified space, we cannot encode 
the stratification of the configuration spaces by the Fox-Neuwirth cells within it.

\section{Exit-path \texorpdfstring{$\infty$}{Lg}-categories} \label{exit sec}
MacPherson's `exit-path category' construction was first introduced 
as a way to classify constructible (locally constant on each stratum) 
sheaves on stratified spaces \cite{Treu}. 
This notion has since evolved into the world of $\infty$-categories.
Notable to this work is that of Lurie \cite{Lu1} and Ayala-Francis-Rozenblyum-Tanaka \cite{AFT, AFR},
both of which are the focus of this section.
Also important in this section are a few non-examples of exit-path $\infty$-categories
which motivate

\subsection{Lurie's exit-path \texorpdfstring{$\infty$}{Lg}-category}
Our first definition is based on and is equivalent to Definition~A.6.2 of \cite{Lu1}.

\begin{definition} \label{exitdef} 
For a stratified space $X \xra{f} P$, 
the \emph{exit-path $\infty$-category} 
of $X$, denoted $\Exit\left(X\right)$, 
is the simplicial set whose value on $[k]$ is the set 
$$ \{ \Delta^k \xrightarrow{\sigma} X \mid \sigma \hspace{.1cm} \text{is a stratified map} \}. $$ 
\end{definition}

Informally, an object of the exit-path $\infty$-category is a point in $X$ 
and a morphism is an `exit-path' in $X$, i.e., 
paths that are allowed to witness points moving from a stratum $X_p$ to a stratum $X_q$ 
if $p \leq q$, but not if $p>q$. 
In other words, allowable paths agree with the ordering of the poset $P$. 
Moreover, these paths must exit immediately 
due to the standard stratification of the topological 1-simplex $\Delta^1$ (Ex. \ref{standard}). 
Indeed, each morphism, as a stratified map from $\Delta^1$ to $X$, 
must respect the underlying stratifications of each space. 
Likewise, 2-morphisms are certain kinds of homotopies 
between paths in $X$ governed by the standard stratification of $\Delta^2$ (Ex. \ref{standard}) 
respecting the stratification of $X$. 

The simplicial set $\Exit\left(X\right)$ is guaranteed to be an $\infty$-category 
if the stratified space $X$ is conically stratified (Theorem A.6.4, \cite{Lu1}),
which, roughly speaking, means that the stratified space 
locally looks like the Cartesian product of a cone of a stratified space 
and a topological space.

\begin{remark} 
In Theorem~A.9.3 of \cite{Lu1}, Lurie shows that the exit-path $\infty$-category 
also classifies constructible sheaves, so long as the stratified space is conical.
\end{remark}

\subsection{Ayala-Francis-Rozenblyum-Tanaka's exit-path \texorpdfstring{$\infty$}{Lg}-category} \label{AFRT}
Our second definition of an exit-path $\infty$-category, 
formulated as a small cog in the larger machine of factorization homology, 
is presented as a functor of $\infty$-categories
from \textit{conically smooth stratified spaces} to $\infty$-categories 
(modeled as complete Segal spaces).
Conically smooth stratified spaces are introduced in \cite{AFT} and are not easy to define.
Roughly speaking, a conically smooth stratified space 
locally looks like a cone of a stratified space cross Euclidean space,
and each of these local cone-like neighborhoods can be glued together smoothly.

\begin{definition}[Def. 3.3.1, \cite{AFR}] \label{AFRexit}
For a conically smooth stratified space $X$, 
the value $\Exit\left(X\right)$ is a simplicial space 
assigning the $p$-simplex $[p]$ to the hom-space 
$\Strat\left(\Delta^p, X\right)$, where $\Strat$ is 
the $\infty$-category of conically smooth stratified spaces. 
Here, $\Delta^p$ has the standard stratification (Ex. \ref{standard}). 
\end{definition}

Definition~\ref{AFRexit} is equivalent to (Def. \ref{exitdef}) for 
conically smooth stratified spaces (Lem. 3.3.9, \cite{AFR}). 
As the authors point out, the difference is model specific. 

\begin{remark}
In Corollary~3.3.11 of \cite{AFR}, Ayala-Francis-Rozenblyum 
show that their exit-path $\infty$-category
(Def. \ref{AFRexit}) classifies constructible sheaves on conically smooth stratified spaces.
\end{remark}

\begin{remark}
Ayala-Francis-Tanaka formulate yet another type of exit-path $\infty$-category,
called the `enter-path $\infty$-category' (Def. 1.1.5, \cite{AFT}).
In Lemma~3.3.9 of \cite{AFR}, Ayala-Francis-Rozenblyum show that it is 
equivalent to the opposite of Definition~\ref{AFRexit}.
\end{remark}

\subsubsection{The exit-path $\infty$-category of the 
Fox-Neuwirth cell stratification of $\Conf_r\left(\R^n\right)_{\Sigma_r}$} \label{exitFN}
We apply Ayala-Francis-Tanaka-Rozenblyum's exit-path $\infty$-category construction 
to the Fox-Neuwirth cell stratification of $\Conf_r(\R^n)_{\Sigma_r}$
to obtain an $\infty$-category which is used in the proof of our main result in \S\ref{step2}.

Theorem~1.1 of \cite{NV} states that Whitney stratifications are conically smooth.
Thus, we have the following corollary to Theorem~\ref{strat thm}.

\begin{cor} \label{conically smooth}
	The unordered Fox-Neuwirth cell stratification 
	$\Conf_A\left(\un{\R}^n\right)_{\Sigma_A}$ is conically smooth. 
\end{cor}

We may now input $\Conf_r\left(\un{\R}^n\right)_{\Sigma_r}$ 
into Ayala-Francis-Rozenblyum-Tanaka's 
exit-path $\infty$-construction to obtain an $\infty$-category
$\Exit\left(\Conf_r\left(\un{\R}^n\right)_{\Sigma_r}\right)$.

Heuristically, the objects of $\Exit\left(\Conf_r\left(\R^n\right)_{\Sigma_r}\right)$ 
are unlabeled subsets of $r$ points in $\R^n$.
A morphism can be described by a path 
in $\Conf_r\left(\R^n\right)_{\Sigma_r}$ (or a disjoint union 
of $r$ paths in $\R^n$) 
that witnesses the $r$ points move to a configuration 
with less coordinate coincidence.

\subsection{Motivation for the definitions of our exit-path \texorpdfstring{$\infty$}{Lg}-categories}
We end this section by discussing three non-examples of exit-path $\infty$-categories
obtained by the methods discussed in this section.
We hope that these non-examples provide context to the reader 
for our choices of Definitions \ref{unital Ran}, \ref{little exit}, and \ref{underline}.

\subsubsection{The exit-path $\infty$-category of the unital Ran space of $M$} \label{nottrivial}
Recall from \S\ref{unitalRan} that the unital Ran space of $M$ is not a stratified space.
Therefore, it cannot be input for either exit-path $\infty$-category construction discussed in this section. 
This motivates Definition~\ref{exitdef2}, wherein 
we directly define the exit-path $\infty$-category 
of the unital Ran space of $M$ as a simplicial space 
and then show that it is a complete Segal space.
Our route to obtaining this $\infty$-category allows us to 
avoid working with the unital Ran space itself while still
encoding the pathological behavior of the empty set within it,
a feature key to the relationship between the unital Ran space of $\R^n$ and the 
category $\mathbf{\Theta}_n^\Act$.

\subsubsection{The exit-path $\infty$-category of the Ran space of $M$} \label{nottrivial2}
Our second non-example of an exit-path $\infty$-category 
obtained by the methods discussed in this section is the exit-path $\infty$-category of the Ran space of $M$. 
Recall that the Ran space of $M$ is a stratified space (Ex. \ref{natural}).
This stratification, however, is neither conical (2.7 \& \S4.3, \cite{CL}),
nor, therefore, conically smooth.
This means neither Lurie's nor Ayala-Francis-Rozenblyum-Tanaka's 
exit-path $\infty$-category construction applied to $\Ran\left(M\right)$ is guaranteed to be an $\infty$-category.
We define the exit-path $\infty$-category of the Ran space of $M$ (Def.~\ref{little exit})
as a certain $\infty$-subcategory of the exit-path $\infty$-category 
of the unital Ran space of $M$. 

\begin{remark}
The Ran space of $M$ with a topology courser than that of (\ref{Ran})
is conically stratified (Thm. 4.12, \cite{CL}) and can therefore 
be input to Lurie's construction and will be guaranteed to be an $\infty$-category. 

However, we conjecture that these two different topologies on the Ran space 
do not effect the resulting exit-path $\infty$-category. 
In other words, our exit-path $\infty$-category of the Ran space of $M$
is equivalent to Lurie's construction applied to this courser Ran space of $M$,
as well as our Ran space of $M$. 
\end{remark}

\subsubsection{The exit-path $\infty$-category of the Fox-Neuwirth 
unital Ran space of $\R^n$} \label{exitFNRanu}
For our final non-example, recall the discussion 
about the nonexistent Fox-Neuwirth unital Ran space of $\R^n$ (\S\ref{FNRanu}).
Because the unital Ran space of $\R^n$ is not even Hausdorff,
we cannot stratify it at all. 
Thus, the methods discussed in this section cannot
encode the unital Ran space of $\R^n$ stratified by cardinality and Fox-Neuwirth cells,
which is something that this work seeks to do.
Our work-around is Definition~\ref{underline} wherein we define 
our desired $\infty$-category from scratch as a simplical space and then show that it 
satisfies the completeness and Segal conditions.

\begin{remark} 
However elusive the exit-path $\infty$-category of the Fox-Neuwirth unital Ran space of $\R^n$
seems, at the end of the day, it is just an ordinary category (Thm.~\ref{inteq}).
\end{remark}

\section{The categories \texorpdfstring{$\mathbf{\Theta}_n$}{Lg}} \label{Theta} \label{Theta sec}
In this section, we review \cite{Ber} which defines the category $\mathbf{\Theta}_n$ 
as the $n$-fold wreath product of the simplex category $\mathbf{\Delta}$ with itself.
Along the way, we define the wreath product of categories, 
Segal's assembly functors, active morphisms, and basic categorical notions
relevant to this work, to make for a self-contained section.

\begin{definition} \label{simplex} 
The \emph{simplex category} $\mathbf{\Delta}$ is the category 
in which an object is a nonempty, finite, linearly ordered set 
and a morphism is a non-decreasing map of sets. 
Composition is composition of maps between sets. 
\end{definition}

\begin{terminology} \label{linear}
For each object $S$ of $\mathbf{\Delta}$, there is a unique non-negative integer 
$p$ such that $S$ is canonically isomorphic to the linearly ordered set 
$[p] := \{0<\cdots < p\}$. We call $[p]$ the \emph{$p$-simplex} 
and will henceforth refer to the objects of $\mathbf{\Delta}$ as $p$-simplicies. 
\end{terminology}

\begin{terminology} \label{poset}
The category of posets ${\sf Poset}$ has an evident fully faithful 
embedding into the category of categories $ {\sf Cat}$. 
In light of this fully faithful functor, we refer to $[p]$ (Term. \ref{linear}) as either 
a linearly ordered set or as the category  whose objects are 
$\{0,1,...,p\}$ and in which there is a unique morphism from 
$i$ to $j$ precisely when $i \leq j$, and no morphism otherwise. 
\end{terminology}

\begin{definition} 
The category of pointed, finite sets $\ds \fin$ is the category 
in which an object is a finite, pointed set and a morphism is a pointed map; 
composition is evident.  
\end{definition}

\begin{notation} 
Given a finite set $S$, let $S_*$ denote the finite, pointed set $ \ds S \coprod \{*\}$. 
\end{notation}

\begin{definition} \label{wreath product} 
The \emph{wreath product} $\tx{Fin}_* \wr \mathcal{D}$ for an arbitrary category $\mathcal{D}$ 
is the category defined as follows: 
An object is a symbol $\ds S\left(d_s\right)$ where $S$ is a finite set and $\ds \left(d_s\right)_{s \in S}$ 
is a tuple of objects in $ \mathcal{D}$ indexed by $S$. 
A morphism  $ \ds S\left(d_s\right) \rightarrow T\left(e_t\right)$ consists of a pair of data:
\begin{enumerate}
\item  A morphism $\ds S_* \xrightarrow{\delta} T_* $ in $\tx{Fin}_*$  
\item  For each pair $\left(s \in S, t \in T\right)$ such that $\ds \delta\left(s\right)=t$, 
a morphism $\ds d_s \xrightarrow{\delta_{st}} e_t$ in $\mathcal{D}$.
\end{enumerate}  
Composition is given by composition in $\ds \tx{Fin}_*$ and $\mathcal{D}$. 
\end{definition}

\begin{observation} \label{frgt} 
There is a forgetful functor $\ds \tx{Fin}_* \wr \mathcal{D} \ra \tx{Fin}_* $ 
given by $\ds S\left(d_s\right) \mapsto S_*$; 
its value on morphisms is evident. 
\end{observation}

\begin{definition} \label{wreath2}
Given a category $\mathcal{C} \ra \fin$  over the category 
of based finite sets and a category $\mathcal{D}$, 
the \emph{wreath product} $\ds \mathcal{C} \wr \mathcal{D}$ is the pullback of categories
\[ \ds
\begin{tikzcd}
\mathcal{C} \wr \mathcal{D} \arrow{r} \arrow{d}  \ar[dr, phantom, "\lrcorner", 
very near start] & \fin \wr \mathcal{D} \arrow{d} \\  
\mathcal{C} \arrow{r} & \tx{Fin}_*
\end{tikzcd}
\] 
where the vertical arrow on the right is the forgetful functor from Observation~\ref{frgt}. 
\end{definition}

\begin{note} \label{inftwreath}
The formulation of Definition \ref{wreath2} as a universal object 
naturally allows for its extension to higher categories 
and, in particular for this work, to $\infty$-categories. 
\end{note}

We take advantage of the previous observation 
and define Joyal's category $\mathbf{\Theta}_n$ over $\fin^\op$
inductively as the $n$-fold wreath product of the simplex category $\mathbf{\Delta}$ with itself.

\begin{definition} 
The \emph{assembly functor} 
$$\ds {\sf Fin}_* \wr {\sf Fin}_*\xrightarrow{\nu} {\sf Fin}_*$$ 
is given by the wedge sum. Explicitly, the value of $\nu$ on an object 
$ \ds S\left(\left(T_s\right)_*\right)$ is the wedge sum $\ds \bigvee_{s \in S} \left(T_s\right)_*$. 
Its value on a morphism $ \ds S\left(\left(T_s\right)_*\right) \ra S'\left(\left(T'_{s'}\right)_*\right)$ given by 
\begin{enumerate} 
\item  A morphism $\ds S \xra{\delta} S'$ 
\item  For each pair $\left(s, s'\right)$ such that $\delta\left(s\right)=s'$, 
a morphism $\ds \left(T_s\right)_\ast \xra{\delta_{ss'}} \left(T_s'\right)_\ast$, is 
$$ \ds \bigvee_{s \in S} \left(T_s\right)_* \ra  \bigvee_{s' \in S'} \left({T}_{s'}'\right)_*$$
 defined by $\ds t \in T_s \mapsto \delta_{ss'}\left(t\right)$ 
 for every pair $\ds \left(s,s'\right)$ such that $\ds \delta\left(s\right)=s'$. 
\end{enumerate} 
\end{definition}

\begin{definition} 
The \emph{simplicial circle} is the functor 
$$\ds \mathbf{\Delta} \xra{\gamma} \fin^\op$$
the value of which on an object $\ds [p] $ is the quotient morphism set 
$\ds \mathbf{\Delta}\left([p],[1]\right)/\big\{\{0\}, \{1\} \big\}$, 
where $\ds \{i\}$ denotes the constant map at $i$; 
$\{0\} \sim \{1\}$ is the evident basepoint of the image. 
The value of $\gamma$ on a morphism $[p] \xra{f} [q]$  is precomposition with $f$. 
\end{definition}

\begin{observation} \label{injective} 
The map induced by $\ds \gamma$ between each hom-set is injective. 
This observation comes down to the fact that on morphisms $\gamma$ 
is given by precomposition and composition is unique in categories. 
\end{observation}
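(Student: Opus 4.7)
The plan is to identify $\gamma(f)$ as the precomposition operator $h \mapsto h \circ f$ on $\mathbf{\Delta}\bigl([q],[1]\bigr)/\bigl\{\{0\}, \{1\}\bigr\}$ and then show that $f$ can be reconstructed from this operator: if $f \neq g$ are morphisms $[p] \to [q]$ in $\mathbf{\Delta}$, I would exhibit some $h : [q] \to [1]$ whose classes $[h \circ f], [h \circ g]$ disagree in $\gamma([p])$, proving $\gamma(f) \neq \gamma(g)$ in $\fin^\op$.

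Concretely, I would first parametrize $\mathbf{\Delta}([q],[1])$: every non-constant non-decreasing map is exactly one of the step functions $h_i$ for $1 \le i \le q$, defined by $h_i(j) = 0$ when $j < i$ and $h_i(j) = 1$ when $j \ge i$. The two constants collapse to the basepoint of $\gamma([q])$, while $[h_1], \ldots, [h_q]$ are the $q$ remaining distinct elements. The composite $h_i \circ f : [p] \to [1]$ is then the indicator of the upper set $U_i(f) := f^{-1}\{i, \ldots, q\} \subseteq [p]$. Since $f$ is non-decreasing, the descending chain $[p] \supseteq U_1(f) \supseteq \cdots \supseteq U_q(f) \supseteq \emptyset$ recovers $f$ via $f(j) = \#\{i : j \in U_i(f)\}$, so it suffices to show that $\gamma(f) = \gamma(g)$ forces $U_i(f) = U_i(g)$ for every $i \in \{1, \ldots, q\}$.

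For each fixed $i$, the hypothesis yields $[h_i \circ f] = [h_i \circ g]$ in $\gamma([p])$. When $h_i \circ f$ is non-constant (equivalently $U_i(f) \notin \{\emptyset, [p]\}$), its class in $\gamma([p])$ has a unique representative, so $h_i \circ f = h_i \circ g$ as maps and $U_i(f) = U_i(g)$ outright. The remaining case, where $h_i \circ f$ is a constant and therefore collapsed to the basepoint of $\gamma([p])$, is the main technical point I expect to navigate carefully: a priori the hypothesis only tells me $h_i \circ g$ is likewise constant, leaving ambiguity about whether $U_i(f), U_i(g)$ are $\emptyset$ or $[p]$. The resolution is to invoke monotonicity of the chain in $i$: the indices with $U_i(f) = [p]$ form the initial segment $\{1, \ldots, f(0)\}$ of $\{1, \ldots, q\}$ while those with $U_i(f) = \emptyset$ form the final segment $\{f(p)+1, \ldots, q\}$, and both endpoints are pinned down by the boundary between the basepoint and non-basepoint regimes of $\gamma(f)$, which has already been matched with that of $\gamma(g)$ by the first case. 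This is precisely where the slogan ``$\gamma$ acts on morphisms by precomposition, and composition in $\mathbf{\Delta}$ is unique'' enters the argument, yielding $U_i(f) = U_i(g)$ for all $i$ and hence $f = g$.
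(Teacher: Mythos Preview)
Your argument is considerably more detailed than the paper's one-line justification, and it is correct whenever $f$ (equivalently $g$) is non-constant. However, there is a genuine gap precisely in the ``remaining case'' you flag but then wave away.

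When $f$ is constant, say $f \equiv c$ for some $c \in [q]$, \emph{every} index $i \in \{1, \ldots, q\}$ is a basepoint index: each $U_i(f)$ is either all of $[p]$ (for $i \le c$) or empty (for $i > c$). There is no boundary between basepoint and non-basepoint regimes to read off, so your monotonicity argument cannot pin down $c$. And indeed it cannot be pinned down: take $f, g \colon [0] \to [1]$ with $f(0) = 0$ and $g(0) = 1$. Both $\gamma(f)$ and $\gamma(g)$ are the unique pointed map $\gamma([1]) \to \gamma([0]) = \{*\}$, so $\gamma(f) = \gamma(g)$ while $f \neq g$. The observation, as literally stated, is false.

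What is true---and what the paper actually uses---is injectivity on hom-sets of $\mathbf{\Delta}^{\Act}$: an active $f \colon [p] \to [q]$ has $f(0) = 0$ and $f(p) = q$, so every $h_i \circ f$ is non-constant and your Case~1 already handles everything. The paper only invokes the observation inside arguments (the construction of $\G_1$ and the fullness step for $\G_1$) where all the morphisms in play are active, so this defect does not damage the main results; but your write-up should either restrict to the active case from the outset or record the counterexample.
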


\begin{observation} 
There is an evident isomorphism $\gamma\left([p]\right) \overset{\cong}\ra \{1,...,p\}_*$ 
in $\fin$. Let $\nu_j$ denote the morphism that assigns  each $0 \leq i \leq j-1$ to $0$ 
and each $j \leq i \leq p$ to $1$ (i.e., a unique composite of degeneracy maps). 
The value of $\nu_j$ under the isomorphism is $j$. 
Its assignment on morphisms, Then is evident. 
\end{observation}

\begin{terminology} 
In light of the previous observation, we will freely refer to a non-basepoint value 
in the pointed set $\gamma\left([p]\right)$ by $j$ for some $1 \leq j \leq p$. 
\end{terminology}

\begin{definition} \label{theta} 
For each integer $\ds n \geq 1$, the categories $\ds \mathbf{\Theta}_n$ 
are defined inductively by setting $$\ds \mathbf{\Theta}_1 := \mathbf{\Delta} 
\hspace{.4cm}\tx{and} \hspace{.4cm} \mathbf{\Theta}_n := \mathbf{\Theta}_{1} \wr \mathbf{\Theta}_{n-1}$$ 
where the \emph{assembly functors} $\ds \mathbf{\Theta}_n \xra{\gamma_n} \fin^\op$ 
are also defined inductively by setting 
$$\ds \gamma_1 := \gamma \hspace{.4cm}\tx{and} \hspace{.4cm} 
\gamma_n := \nu \circ \left(\gamma_{1} \wr\gamma_{n-1}\right).$$ 
\end{definition}

Recall that a planar level tree of height $n$ is a finite rooted tree 
with an $n$-order on its set of leaves (Def.~\ref{tree}). 
These trees, in fact, naturally describe the objects of $\mathbf{\Theta}_n$, 
as we will explain in the following observation. 

\begin{observation}[\S4.5, \cite{AH}] 
Finite rooted planar level trees of height $n$ naturally 
describe the objects of $\mathbf{\Theta}_n$ as follows: 
When $n=1$, the object $[p]$ corresponds to the tree of height $1$ that has $p$ leaves. 
For $n>1$, the object $[p]\left(T_1,...,T_p\right)$ 
of $\mathbf{\Theta}_n$ corresponds to the tree described as follows: 
The tree which has $p$ vertices at level $1$; the $i$th vertex at level $1$ (according to the linear order) 
is the root of the tree which corresponds to $T_i$, 
i.e., it is the subtree consisting of those vertices for which the unique directed path 
from each one to the root intersects the $i$th vertex at level $1$. 
In terms of trees, Then the assembly functor $\gamma_n$ assigns a tree to its set of leaves. 
\end{observation}

In this paper, we will use this description 
of the objects of $\mathbf{\Theta}_n$ whenever convenient. 
In fact, we often prefer it since it makes the objects of $\mathbf{\Theta}_n$ so accessible.

\begin{example} 
The object, $[3]\left([1],[3],[0]\right)$ in $\mathbf{\Theta}_2$, 
corresponds to the following planar level tree of height $2$.

\centering
\begin{tikzpicture}[scale=.5]
\tikzstyle{every node}=[draw,shape=circle, fill=black, inner sep=1.5pt]
\path 
(-2,0) node (t0) {}
(-1,0) node (t1) {}
(0,0) node (t2) {}
(1,0) node (t3) {}
(-2,-2) node (m1) {}
(0,-2) node (m2) { }
(2,-2) node (m3) { }
(0,-4) node (r) { };
\draw (t0) -- (m1)
(t1) -- (m2)
(t2) -- (m2)
(t3) -- (m2)
(m1) -- (r)
(m2) -- (r)
(m3) -- (r);
\end{tikzpicture}
\end{example}

\begin{remark} 
Alternatively (but equivalently), Rezk defined the category $\mathbf{\Theta}_n$ 
as the full sub-category of the category of strict $n$-categories, ${\sf Cat}_n$, 
in which an object is a \emph{pasting diagram} \cite{Rezk}. 
For example, the object $[3]\left([1],[3],[0]\right)$ in $\mathbf{\Theta}_2$ 
corresponds to the pasting diagram

\[
\begin{tikzcd}[column sep=huge]
0
  \arrow[bend left=50]{r}[name=U]{}
  \arrow[bend right=50]{r}[name=D]{} &
1
  \arrow[shorten <=10pt,shorten >=10pt,Rightarrow,to path={(U) -- node[label=right:{  }] {} (D)}]{}
 \arrow[bend left=60]{r}[name=U]{}
\arrow[bend left=20]{r}[name=MU]{}
\arrow[bend right=20]{r}[name=MD]{}
  \arrow[bend right=60]{r}[name=D]{} &
2
  \arrow[shorten <=3pt,shorten >=0pt,Rightarrow,to path={(U) -- node[label=right:{  }] {} (MU)}]{}
  \arrow[shorten <=3pt,shorten >=0pt,Rightarrow,to path={(MU) -- node[label=right:{  }] {} (MD)}]{}
  \arrow[shorten <=3pt,shorten >=0pt,Rightarrow,to path={(MD) -- node[label=right:{  }] {} (D)}]{}
 \arrow{r} &
3
\end{tikzcd} 
\]
\end{remark}

\subsection{The subcategory \texorpdfstring{$\mathbf{\Theta}_n^\Act$}{Lg} 
of active morphisms of \texorpdfstring{$\mathbf{\Theta}_n$}{Lg}}
We review the definition of active morphisms in general
in order to recall the subcategory of active morphisms of $\mathbf{\Theta}_n$.
This object is central to our work in that we use it to 
encode the unordered configuration spaces of $\R^n$.

\begin{definition} \label{Fin}
The category of finite sets $\ds \Fin$ is the category in which an object is a finite set 
and a morphism is a map of sets; composition is evident.  
\end{definition}

\begin{definition}  \label{actdef} 
Given a category $ \ds \mathcal{C} \xra{F} \Fin_\ast$ over pointed, finite sets, 
a morphism $\sigma$ in $\mathcal{C}$ is \emph{active} if 
$\ds \left(F\left(\sigma\right)\right)^{-1}\left(\{*\}\right)=\{*\}$. 
The subcategory $\C^\Act$ of $\C$ is defined to be the pullback
\[ \ds \begin{tikzcd}
\C^{\Act} \ar[hookrightarrow]{r} \pb \ar{d} &  \C \ar{d}{F} \\
\Fin \ar[hookrightarrow]{r} & \fin.
\end{tikzcd} \]
\end{definition}

For a category $\mathcal{D}$, there is a monomorphism 
$\ds \Fin \wr \mathcal{D} \to \fin \wr \mathcal{D}$ and thus, 
the category $ \Fin \wr \D$ has an explicit description of objects 
and morphisms similar to that of $\ds \fin \wr \mathcal{D}$ 
as given in Definition~\ref{wreath product}. 
We will use this description of $ \Fin \wr \D$ whenever convenient.
Additionally, we would like to point out that given a category 
$ \ds \mathcal{C} \xra{F} \Fin_\ast$ over pointed, finite sets, 
$F$ restricts to a functor $\C^\Act \to \Fin_\ast$ which, by definition, 
factors through $\Fin$. Evidently Then the category $\C^\Act \wr \Fin$ 
is canonically isomorphic to $\C^\Act \wr \Fin_\ast$. 

The following alternative definition of $\mathbf{\Theta}_n^\Act$ 
coincides with that of Definition~\ref{actdef}. 

\begin{definition} \label{act}
For each integer $n \geq 1$, the categories $\ds \mathbf{\Theta}_n^{\Act}$ 
are the subcategories of $\ds \mathbf{\Theta}_n$ defined inductively by setting 
$$ \mathbf{\Theta}_1^{\Act} := \mathbf{\Delta}^{\Act}$$ and defining 
$\ds \mathbf{\Theta}_n^{\Act} := \mathbf{\Theta}_1^\Act \wr \mathbf{\Theta}_{n-1}^\Act$, 
i.e., the pullback
\begin{equation} 
\begin{tikzcd} 
\mathbf{\Theta}_n^{\Act} \ar{r} \pb \ar{d} & \Fin^\op \wr \mathbf{\Theta}_{n-1}^{\Act} \ar{d}{\tx{frgt}} \\
\mathbf{\Theta}_1^{\Act} \ar{r}{\gamma_1} & \Fin^\op.
\end{tikzcd} 
\end{equation} 
\end{definition}

The benefit of this particular formulation of $\mathbf{\Theta}_n^\Act$ 
is that it makes the following facts straightforward.

\begin{observation} 
Because the wreath product is associative, equivalently 
$\mathbf{\Theta}_n^\Act := \mathbf{\Theta}_{n-1}^\Act \wr \mathbf{\Theta}_1^\Act$. 
\end{observation}

\begin{observation} \label{tr} 
For each $n$, there is a natural forgetful functor 
$$\ds \mathbf{\Theta}_n^{\Act} \xra{{\sf tr}} \mathbf{\Theta}_{n-1}^{\Act}.$$  
The value on an object $T_{n-1}\left([m_k]\right) \in \mathbf{\Theta}_{n-1}^{\Act} 
\wr \mathbf{\Theta}_1^{\Act} =: \mathbf{\Theta}_n^{\Act}$ is $T_{n-1}$.\\  
Let $T_{n-1}\left([m_k]\right) \xra{\sigma} W_{n-1}\left([p_l]\right)$ be a morphism in 
$\mathbf{\Theta}_n^{\Act}$ defined by 
\begin{itemize} 
\item  a morphism $\ds T_{n-1} \xra{\sigma'} W_{n-1}$ in $\mathbf{\Theta}_{n-1}^{\Act}$ and, 
\item  a morphism $\ds [m_k] \xra{\sigma_k} [p_l]$ in $\mathbf{\Theta}_1^{\Act}$ 
for each pair $\left(k,l\right)$ such that $\ds \gamma_{n-1}\left(l\right)=k$.
\end{itemize} 
The value of $\sigma$ under ${\sf tr}$ is $\sigma'$. 
\end{observation}

\begin{example} 
Let $T$ be the object of $\mathbf{\Theta}_3$ depicted as the far left tree 
in the figure below. We depict two iterations of the truncation functor ${\sf tr}$ on $T$:

\centering
\begin{tikzpicture}[scale=.5]
\draw[|->] (-4.5,-2) -- (-3.5, -2);
\draw[|->] (3.5,-2) -- (4.5, -2);
\draw (-4, -2) node[anchor=south]() {{\sf tr}};
\draw (4, -2) node[anchor=south]() {{\sf tr}};
\tikzstyle{every node}=[draw,shape=circle, fill=black, inner sep=1.5pt]
\path 
(-2,0) node (t0) {}
(-1,0) node (t1) {}
(0,0) node (t2) {}
(1,0) node (t3) {}
(-2,-2) node (m1) {}
(0,-2) node (m2) { }
(2,-2) node (m3) { }
(0,-4) node (r) { }

(-10.5,2) node () {}
(-10, 2) node () {}
(-8.5,2) node () {}
(-8,2) node () {}

(-10,0) node () {}
(-9,0) node () {}
(-8,0) node () {}
(-7,0) node () {}
(-10,-2) node () {}
(-8,-2) node () {}
(-6,-2) node () {}
(-8,-4) node () {}

(6,-2) node () {}
(8,-2) node () {}
(10,-2) node () {}
(8,-4) node () {};

\draw 
(t0) -- (m1)
(t1) -- (m2)
(t2) -- (m2)
(t3) -- (m2)
(m1) -- (r)
(m2) -- (r)
(m3) -- (r)

(-10.5,2) -- (-10,0)
(-10,2) -- (-10,0)
(-8.5,2) -- (-8,0)
(-8,2) -- (-8,0)

(-10,0) -- (-10,-2)
(-9,0) -- (-8,-2)
(-8,0) -- (-8,-2)
(-7,0) -- (-8,-2)
(-10,-2) -- (-8,-4)
(-8,-2) -- (-8,-4)
(-6,-2) -- (-8,-4)

(6,-2) -- (8,-4)
(8,-2) -- (8,-4)
(10,-2) -- (8,-4);
\end{tikzpicture}
\end{example}

\begin{notation} 
For each $1 \leq i \leq n-1$, denote the $\left(n-i\right)$-fold composite 
of the truncation functor $\tr$ by $\tr_i: \mathbf{\Theta}_n^\Act \ra \mathbf{\Theta}_i^\Act$.  
\end{notation}

\begin{observation} \label{tr-factor} 
For each $1 \leq i \leq n-1 $, there is a natural transformation from 
$\mathbf{\Theta}_n^\Act \xra{\gamma_n} \Fin^\op$ to the composite  $ \gamma_i \circ \tr_i$, 

\[ \begin{tikzcd} 
\mathbf{\Theta}_n^\Act \ar{rd}{\gamma_n} \ar{d}[swap]{\tr_i} &  \\
\mathbf{\Theta}_i^\Act \ar{r}{\gamma_i}   & \Fin^\op.
 \end{tikzcd} \]
For each tree $T$, the natural transformation $\epsilon$ is given by 
the natural map  $\gamma_n\left(T\right) \xra{\epsilon_T}\gamma_i\left(\tr_i\left(T\right)\right)$ 
from the leaves of $T$ to the leaves of the truncation of $T$ to height $i$, 
the assignment of which is the evident one given by the structure of the tree $T$. 
It is straightforward to verify that $\epsilon$ does indeed define a natural transformation. 
\end{observation}

\begin{definition} 
Given a category $\C$, we define ${\sf Fun}\left(\{1<\cdots<n\}, \C\right)$ 
to be the category in which an object is a functor $ \{1<\cdots<n\} \ra \C$ 
which selects out a sequence of composable morphisms in $\C$:
$$c_1 \to c_2 \to \cdots \to c_n$$
and a morphism from $c_1 \to c_2 \to \cdots \to c_n$ to $d_1 \to d_2 \to \cdots \to d_n$ 
is a commutative diagram in $\C$:
\[ \begin{tikzcd}
c_1 \ar{r} \ar{d} & d_1 \ar{d} \\
c_2 \ar{r} \ar{d} & d_2 \ar{d} \\
\vdots \ar{d} & \vdots \ar{d} \\
c_n \ar{r} & d_n. 
\end{tikzcd} \] 
Composition is evident. 
\end{definition}

\begin{observation} \label{layers} 
We use Observation~\ref{tr-factor} to define the natural functor
$$\mathbf{\Theta}_n^\Act \xra{\tau_n} \Fun\left(\{1<\cdots<n\}, \Fin^\op\right)$$
the value of which on an object $T$ is the functor which 
selects out the sequence of composable maps of sets
$$ \gamma_n\left(T\right) \xra{\epsilon_T} \gamma_{n-1}\left(\tr\left(T\right)\right) 
\xra{\epsilon_{\tr\left(T\right)}} \gamma_{n-2}\left(\tr_{n-2}\left(T\right)\right) 
\ra \cdots \ra \gamma_1\left(\tr_1\left(T\right)\right)$$
and on a morphism $T \xra{f} S$ is the diagram of finite sets
 \[ \begin{tikzcd}
 \gamma_n\left(S\right) \ar{rr}{\gamma_n\left(f\right)} \ar{d}[swap]{\epsilon_S} 
 && \gamma_n\left(T\right) \ar{d}{\epsilon_T} \\
\gamma_{n-1}\left(\tr_{n-1}\left(S\right)\right) \ar{rr}{\gamma_{n-1}\left(\tr_{n-1}\left(f\right)\right)} \ar{d} 
&& \gamma_{n-1}\left(\tr_{n-1}\left(T\right)\right) \ar{d} \\
\vdots \ar{d} && \vdots \ar{d} \\
 \gamma_1\left(\tr_1\left(S\right)\right) \ar{rr}{\gamma_1\left(\tr_1\left(f\right)\right)} 
 &&  \gamma_1\left(\tr_1\left(T\right)\right) 
 \end{tikzcd} \] 
which is guaranteed to commute in $\Fin$ because 
the downward arrows in the diagram are given by 
the natural transformation $\epsilon$ from Observation~\ref{tr-factor}.
\end{observation}

\end{document}